\newcommand{\vzero}[0]{\mathbf 0}
\newcommand{\vone}[0]{\mathbf 1}
\newcommand{\Cov}[0]{\text{Cov}}
\newcommand{\calB}[0]{\mathcal{B}}
\newcommand{\calC}[0]{\mathcal{C}}
\newcommand{\calF}[0]{\mathcal{F}}
\newcommand{\calT}[0]{\mathcal{T}}
\newcommand{\E}[0]{\mathds{E}}
\newcommand{\R}[0]{\mathbb{R}}
\newcommand{\Id}[0]{\text{Id}}
\newcommand{\sign}[0]{\text{sign}}
\newcommand{\argmax}[0]{\text{argmax}}
\newcommand{\argmin}[0]{\text{argmin}}
\newcommand{\ub}[0]{\underline{b}}
\newcommand{\us}[0]{\underline{s}}
\newcommand{\Prob}[0]{\mathds{P}}
\renewcommand{\vec}[0]{\text{vec}}
\newcommand{\indep}{\rotatebox[origin=c]{90}{$\models$}}
\renewcommand{\le}{\leqslant}
\renewcommand{\ge}{\geqslant}
\theoremstyle{plain}% default
\newtheorem{thm}{Theorem}[section]
\newtheorem{lem}[thm]{Lemma}
\newtheorem{cor}[thm]{Corollary}
\theoremstyle{definition}
\theoremstyle{remark}
\newtheorem{rmk}{Remark}
\DeclareRobustCommand*\cal{\@fontswitch\relax\mathcal}
\numberwithin{equation}{section}
\begin{document}

\begin{frontmatter}

% "Title of the Paper"
\title{A Robust Bootstrap Change Point Test for High-dimensional Location Parameter%\thanksref{t1}
}
%\thankstext{t1}{Research partially supported by NSF DMS-1404891, NSF CAREER Award DMS-1752614, and UIUC Research Board Awards (RB17092,  RB18099). This work is completed in part with the high-performance computing resource provided by the Illinois Campus Cluster Program at UIUC.}
\runtitle{Robust change point test}

% indicate corresponding author with \corref{}
% \author{\fnms{John} \snm{Smith}\thanksref{t1}\corref{}\ead[label=e1]{smith@foo.com}\ead[label=e2,url]{www.foo.com}}
% \thankstext{t1}{Thanks to somebody} 
% \address{line 1\\ line 2\\ \printead{e1}\\ \printead{e2}}

\begin{aug}
	\author{Mengjia Yu\corref{}\ead[label=e1]{mengjia.yu.uiuc@gmail.com}}
	\and
	\author{Xiaohui Chen\ead[label=e2]{xhchen@illinois.edu}}
	
	\address{Department of Statistics, 
		University of Illinois at Urbana-Champaign, \\
		725 S. Wright Street, Champaign, IL 61820, USA \\ 
		\printead{e1,e2}}
	
	\runauthor{Yu and Chen}
	
\end{aug}

\begin{abstract}
	We consider the problem of change point detection for high-dimensional distributions in a location family when the dimension can be much larger than the sample size. In change point analysis, the widely used cumulative sum (CUSUM) statistics are sensitive to outliers and heavy-tailed distributions. In this paper, we propose a robust, tuning-free (i.e., fully data-dependent), and easy-to-implement change point test that enjoys strong theoretical guarantees. To achieve the robust purpose in a nonparametric setting, we formulate the change point detection in the multivariate $U$-statistics framework with anti-symmetric and nonlinear kernels. Specifically, the within-sample noise is canceled out by anti-symmetry of the kernel, while the signal distortion under certain nonlinear kernels can be controlled such that the between-sample change point signal is magnitude preserving. A (half) jackknife multiplier bootstrap (JMB) tailored to the change point detection setting is proposed to calibrate the distribution of our $\ell^{\infty}$-norm aggregated test statistic. Subject to mild moment conditions on kernels, we derive the uniform rates of convergence for the JMB to approximate the sampling distribution of the test statistic, and analyze its size and power properties. Extensions to multiple change point testing and estimation are discussed with illustration from numerical studies.
\end{abstract}

\begin{keyword}[class=MSC]
	\kwd[Primary ]{62F40}
	\kwd{62G35}
	\kwd[; secondary ]{62E17}
\end{keyword}

\begin{keyword}
	\kwd{Bootstrap}
	\kwd{Change point analysis}
	\kwd{Gaussian approximation}
	\kwd{High-dimensional data}
	\kwd{$U$-statistics}
\end{keyword}

% history:
 \received{\smonth{1} \syear{0000}}

\tableofcontents

\end{frontmatter}

% Main text entry area

\section{Introduction}

Change point detection problems are commonly seen in many statistical and scientific areas including functional data analysis \cite{aue2009estimation,aston2012detecting}, time series inspection \cite{aue2009break,horvath1999testing,yau2016inference}, panel data study \cite{cho2016change,robbins2011mean,horvath2012change,bai2010common}, with applications to fields of biomedical engineering \cite{aston2012evaluating,zhong2016test}, genomics \cite{wang2011non}, financial revenue returns \cite{aston2018high,chofryzlewicz2015,bai2010common} among many others. Statistical testing and estimation of change points have long history with extensive literature \cite{dette2018estimating,aue2009break,holmes2013nonparametric,aston2018high,barigozzi2018simultaneous,lee2016lasso,lee2017oracle}. This paper studies the problem of change point detection for high-dimensional distributions (i.e., $p \gg n$) from a location family with shift parameter. 
Let $X_{i} \sim F_{i}, i =1,\dots,n$ be a sequence of independent random vectors taking values in $\R^{p}$. Our goal is to test whether or not there is a location shift in the distribution functions $F_{i}$. Precisely, let $\calF = \{F_{\theta}(x) = F(x-\theta) : \theta \in \R^{p}\}$ be a location family indexed by the shift parameter $\theta$, where $F = F_{0}$ is the standard distribution in $\calF$ ($F_0$ is arbitrary). We consider the following hypothesis testing problem: 
%	\vspace*{-5pt}
\begin{align*}
&& H_0: \ X_i \overset{i.i.d.}{\sim} F \mbox{ versus }  H_1: &\ X_1, \dots, X_m \overset{i.i.d.}{\sim} F \text{ and } X_{m+1}, \dots, X_n \overset{i.i.d.}{\sim} F_{\theta}, \\
&&&\text{ for some (unknown) $m \in \{1,\dots,n-1\}$ and $\theta\neq 0$}.
\end{align*}
%%%
An advantage of this model is the flexibility of $\calF$ whose mean parameter can be non-existing. Before highlighting the robustness from it, 
%The aforementioned applications focus on change point detection of moments, such as mean vector as the first moment or variance as the second central moment. [check!] When data is heavy-tailed, their modifications can involve, for instance, subjective selection on threshold for data truncation or additional identification for suspicious outliers. Instead of considering correction afterward, our test release the model to location parameter rather than moments at the beginning. Therefore, our test has wider application due to its robustness to heavy-tailed distributions or outliers. 
%%%%
we shall first illustrate below the intuition of constructing a test statistic for separating $H_{0}$ and $H_{1}$. For brevity, we denote $G = F_{\theta}$ (i.e., $G(x) = F(x-\theta)$) for a fixed $\theta$, and $Y_{j} = X_{m+j}, j=1,\dots,n-m$. With this notation, we have $X_{1},\dots,X_{m}$ that are independent and identically distributed (i.i.d.) with distribution $F$ and $Y_{1},\dots,Y_{n-m}$ that are i.i.d.\ with distribution $G$ such that the change point detection problem boils down to the two-sample testing problem for the shift parameter $\theta$ with an unknown change point location $m$. Since $m$ is unknown, we may take all possible ordered pairs in the whole sample $X_{i},i=1,\dots,n$, such that the within-sample noise (i.e., in each $X$ and $Y$ samples, separately) cancels out and the between-sample signal is properly preserved under $H_{1}$. Note that our change point hypothesis on the location family $\calF$ is the same as the location-shift model:
%\vspace*{-8pt} 
\begin{equation}
\label{eqn:location_shift_model}
X_{i} = \theta \ \vone(i > m) + \xi_{i}, \ i=1,\dots,n, \text{ where } \xi_{i} \overset{i.i.d.}{\sim} F \text{ are random vectors in $\R^{p}$ .}
\end{equation}
Viewing $\theta$ as the mean-shift, a natural choice for detecting the existence of a change point shift is to consider the noise cancellations in the empirical mean differences:
%\vspace*{-7pt} 
\begin{equation}
\label{eqn:cp_linear_kernel}
U_{n} = \sum_{1 \le i < j \le n} (X_i-X_j). 
\end{equation}
Under $H_{0}$, we have $\E[U_{n}]=0$ so that there is no mean-shift signal contained in $U_{n}$ and the sampling behavior of $U_{n}$ is purely determined by the random noises $\xi_{1},\dots,\xi_{n}$. On the other hand, if $H_{1}$ is true, then $\E[U_{n}] = -m(n-m)\theta$. Thus, if the mean difference $\theta$ between the two samples is large enough to dominate the random behavior of $U_{n}$ (due to noise $\{\xi_i\}_{i=1}^n$) under $H_{0}$, then the statistic would be able to distinguish $H_{0}$ between $H_{1}$. 

In practice, a main concern of using $U_{n}$ in~\eqref{eqn:cp_linear_kernel} is its robustness. Specifically, the (empirical) mean functional is not robust in the sense that its influence function is unbounded. Further, in the high-dimensional setting, robustness is a challenging issue since information contained in the data is rather limited. To address this problem, we view the shift signal $\theta$ as a more general location parameter in the distribution family $\calF$ without referring to the means. This simple observation brings a major advantage that change point detection can be made possible even in cases where the means are undefined (such as the Cauchy distribution). To achieve the robustness purpose in a nonparametric setting, we consider a general nonlinear form of \eqref{eqn:cp_linear_kernel} in the $U$-statistics framework. Let $h: \R^p \times \R^p \rightarrow \R^d$ be an \textit{anti-symmetric} kernel, i.e., $h(x,y) = -h(y,x)$ for all $x, y \in \R^p$. We propose the statistic 
%	\vspace*{-19pt}
\begin{equation}
\label{eqn:Tn_scaled}
T_n = T_n(X_1^n) = {n}^{1/2}{n \choose 2}^{-1} \sum_{1 \le i < j \le n} h(X_i, X_j) 
\end{equation}
to test for $H_{0}$ against $H_{1}$. Clearly, $T_{n}$ is a (scaled) $U$-statistic of order two. The anti-symmetry of the kernel $h$ plays a key role in testing for the change point in terms of noise cancellations. To see this, under $H_0$ we have $\E[h(X_1,X_2)] = 0$ and $\E[T_{n}] = 0$. Observe that 
%	\vspace*{-8pt}
\begin{equation*}
T_n  = {2 \over {n}^{1/2}(n-1)} \left\{ \sum_{1\le i<j \le m} h(X_i,X_j) \ + \ \sum_{i=1}^m \sum_{j=1}^{n-m} h(X_i, Y_j) \ + \sum_{\mathclap{1 \le i<j \le n-m} }h(Y_i,Y_j) \right\}. 
\end{equation*}
Thus if $H_{1}$ is true, then $\E[T_{n}] \approx 2n^{-3/2} m(n-m)\theta_{h}$, where $\theta_{h} = \E[h(X_{1},Y_{1})]$ is the change point signal through the kernel $h$. If $\theta_{h}$ has a suitable lower bound, then we expect that $T_{n}$ can separate $H_{0}$ and $H_{1}$. For instance, consider the sign kernel $h(x,y) = \sign(x-y)$, where $\sign(x)$ is the component-wise sign operator of $x \in \R^p$ (i.e., for $j=1,\dots,p$, $\sign(x_{j}) = -1, 0, 1$ if $x_{j} < 0$, $x_{j} = 0$, $x_{j} > 0$, respectively). Then,  
%\vspace*{-5pt}
\[
\theta_{h,j} = \E [\sign(X_{1,j}-Y_{1,j})] = 1- 2 \Prob (X_{1,j} \le Y_{1,j}) = 1 -2 \Prob (\Delta_{j} \le \theta_j), 
\]
where $\Delta_{j} = \xi_{1,j} - \xi_{m+1,j}$ is a random variable with symmetric distribution. In particular, if $F$ is the distribution in $\R^{p}$ with independent components such that each component admits a continuous probability density function $\phi_{j}, j =1,\dots,p$, then under local alternatives (i.e., $\theta \approx \vzero$) we have $\theta_{h,j} \approx -2 \ \phi_j^*(0) \ \theta_j$, where $\phi_{j}^*$ is the convolution of the densities of $\xi_{1,j}$ and $-\xi_{m+1,j}$. Hence, $\theta_{h}$ and $\theta$ have the same magnitude, implying that signal distortion under the sign kernel is only up to a multiplicative constant.  

The mean difference statistic $U_{n}$ in \eqref{eqn:cp_linear_kernel} is a special case of $T_{n}$ with the linear kernel $h(x_{1}, x_{2}) = x_{1} - x_{2}$ and $d = p$. The sign kernel $h(x,y) = \sign(x-y)$ considered above is another important anti-symmetric and bounded kernel, which is useful if the means are not robust or undefined. Specifically, for the sign kernel, component-wise median of $T_{n}$ corresponds to the Hodges-Lehmann estimator for the component-wise population median of the location difference before and after the change point \cite{HodgesLehmann1963_AMS}. In the univariate case $p = d = 1$, it is known that the Hodges-Lehmann estimator is a highly robust version of sample mean difference (with the linear kernel) against heavy-tailed distributions, and it has a much higher asymptotic relative efficiency $3 / \pi \approx 95\%$ (with respect to the mean) than the sample median at normality \cite{vogelwendler2017}. In addition, when the change point location $m$ is known, $T_{n}$ is also equivalent to the classical nonparametric Mann-Whitney test statistic (see e.g., Chapter 12 in \cite{vandeVaart1998_AsymStat}). 

Since $T_{n}$ is a $d$-dimensional random vector, we need to aggregate its components to make a decision rule for hypothesis testing.  We construct the critical regions based on the Kolmogorov-Smirnov (i.e., the $\ell^{\infty}$-norm) type aggregation of $T_{n}$, namely our change point test statistic is 
%\vspace*{-16pt} 
\begin{equation}
\label{eqn:T_n_bar}
\overline{T}_n :=  |T_n|_\infty = \max_{1 \le k \le d} |T_{nk}|.
\end{equation} 
Then $H_{0}$ is rejected if $\overline{T}_{n}$ is larger than a critical value such as the $(1-\alpha)$ quantile of $\overline{T}_{n}$. In Section~\ref{sec:bootstrap}, we will introduce a (Gaussian) multiplier bootstrap to calibrate the distribution of $\overline{T}_{n}$, and we will establish its non-asymptotic validity in the high-dimensional setting in Section~\ref{sec:main_results}. 

We point out that our test statistic has comparable computational and statistical properties to the widely used cumulative sum (CUSUM) procedures in literature. For a classical treatment of the CUSUM (and other change point) statistics, we refer to \cite{csorgohorvath1997} as a monograph on the change point analysis. The CUSUM statistics are defined as a sequence of (dependent) random vectors in $\R^{p}$ of the form 
%	\vspace*{-8pt}
\begin{equation}
\label{eqn:cusum_mean_Rp}
\qquad Z_n(s) = \left({s(n-s) \over n}\right)^{1/2} \left({1\over s} \sum_{i=1}^s X_i - {1 \over n-s} \sum_{i=s+1}^n X_i \right), \quad s=1,\dots,n-1. 
\end{equation}
It is obvious that the CUSUM statistics have a sequential nature in that the left and right sample averages are examined along all possible change point locations, which is necessary to estimate the location $m$. However, if the goal is only testing for the existence of a change point, this (local) sequential comparison strategy is not as efficient as a global test \eqref{eqn:Tn_scaled}, both computationally and statistically. Consider $d = p$, which is the case for the sign and linear kernels. For a general nonlinear kernel, computational cost is $O(n^{2}p)$ for $T_{n}$ (and also for $\overline{T}_{n}$). If the kernel is linear (i.e., $h(x,y)=x-y$), then the computational cost can be further reduced to $O(np)$ for $T_{n}$ effortlessly. Thus we call $T_{n}$ is the global \emph{one-pass} Mann-Whitney type test statistic. In contrast, the computational cost for $\{Z_{n}(s)\}_{s=1}^{n-1}$ is $O(n^{2}p)$ which can reduces to $O(np)$ \cite{killick2014changepoint} via dynamic programming. Statistically, it has been shown in \cite{yuchen2017finite,jirak2015} that a boundary removal procedure is needed for the (bootstrapped) CUSUM change point test to achieve the size validity since the distributions of $Z_{n}(s)$ are difficult to approximate at the boundary points. On the contrary, the test statistic $T_{n}$ proposed in this paper does not remove any boundary points because we are able to approximate the distribution of $T_{n}$ based on majority of the data points in the sample $X_{1},\dots,X_{n}$. Thus it is expected that $\overline{T}_{n}$ achieves faster rate of convergence in the error-in-size for the bootstrap calibration. See Remark~\ref{rmk:no_boundary_remove_H0} ahead for a detailed comparison.

\subsection{Literature review and our contribution}
Single change point inference has been extensively studied in literature such as \cite{csorgohorvath1997, gombay2002rates,horvath1993maximum} for univariate or fixed multivariate setting. 

	Using anti-symmetric kernels in $U$-statistics for location change can be traced back to \cite{Pettitt1979}, which considered a CUSUM-type sequence of two-sample Mann-Whitney statistics with the sign kernel and took the maximum absolute value along the sequence as the test statistic. Asymptotic properties of such statistic for univariate data have been studied in the settings of online and offline change point problems \cite{csorgHo1989invariance,gombay2001u,HawkinsDeng2010,kirch2019sequential}.
	To the best of our knowledge, the proposed global one-pass Mann-Whitney type change point detection procedure in~\eqref{eqn:Tn_scaled} based on a general anti-symmetric kernel without using a CUSUM-type sequence is new in literature, even in the one-dimensional case. %Second, when it comes to the high-dimensional change point inference, $U$-statistics approach using anti-symmetric kernels is not well studied, especially for our global one-pass Mann-Whitney type $U$-statistic.
	%Hence, the bootstrap testing procedure proposed in this paper is neat, innovative and fit for the increasing demand of large-scale inference.

Second, owing to increasing ability to handle large dimensional data, the focus migrates to a more challenging stage in high dimension that allows $p\rightarrow\infty$ faster than $n$.  Therefore, signal aggregation across dimension becomes influential in the designing of statistics and algorithm. 
For instance, \cite{jirak2015,yuchen2017finite,wangsamworth2017} dealt with sparse change (i.e. mean structure changes in a sparse subset of coordinates), while \cite{bai2010common,horvath2012change,enikeevaharchaoui2014} considered $\ell^2$-type aggregation for dense change. Taking both cases into account, \cite{enikeevaharchaoui2014} proposed a scan test statistic aiming at sparser change coupled with their linear statistic in inference. \cite{cho2016change} adopted additional weighted CUSUM-type factor along coordinate to make the double-CUSUM statistic more adaptive in detection. 
The detection rate are also investigated in terms of sparsity and signal magnitude as well as change point location \cite{enikeevaharchaoui2014,liu2019minimax,xie2013sequential}.
We show that our result achieves optimal minimax rate, cf. Remark~\ref{rmk:rate_optimal}.
For multiple change point detection which is more challenging and essential in applications, we will discuss a \textit{backward detection} (BD) algorithm without introducing external statistics.
We will also discuss an extension to dependent sequence in Remark \ref{rmk:extension_time_series}.

Among the change point literature, mean change are widely explored using CUSUM statistics \cite{jirak2015,yuchen2017finite,cho2016change,chofryzlewicz2015}, least-square type statistics \cite{bai2010common,bhattacharjee2019change}, $U$-statistics \cite{wang2019inference} and some other kernel based methods \cite{padilla2019optimal,chen2019inference,arlot2019kernel}. 
In practice, when error terms are heavy-tailed, Gaussianity assumption is beyond salvation and becomes too restrictive. This concern especially highlights the potential of robust nonparametric methodology (such as  nonlinear projection) to avoid direct measure on mean or higher moments in data distributions. Note that the $U$-statistic approach, including our method in this paper, is conducting ``global" characterization (either one-sample or two-sample) via kernels to have change point signals peak. Such kernel concept is different from kernel density estimator or kernel distance measure for individual observations. Specifically, \cite{padilla2019optimal} proposed CUSUM variant statistic based on kernel transferred data points; \cite{chen2019inference} smoothed left and right mean function using kernel density estimation; \cite{arlot2019kernel} applied kernel least-squares criterion to quantify segmentation candidate and estimate change point locations. Compared to aforementioned papers, our $U$-statistic approach starts from a pure testing point-of-view that does not rely on any tuning of bandwidth or threshold.

The rest of this paper proceeds as follows. The bootstrap calibration for the distribution of $\overline{T}_{n}$ is described in Section~\ref{sec:bootstrap}. Main results for size validity and power properties of the bootstrap test are derived in Section~\ref{sec:main_results}. Extensions to multiple change point scenario are elaborated in Section~\ref{sec:multiple_extension}. We report simulation study results in Section~\ref{sec:simulation} and real data examples in Section~\ref{sec:real_data}. All proofs with auxiliary lemmas are given in Appendix.
%	and Supplementary Material. 
%	Section \ref{sec:proofs} and \ref{sec:lemmas}.

%	\vspace*{-10pt}
\subsection{Notation}
For $q > 0$ and a generic vector $x = (x_{1},\dots,x_{p})^{T} \in \R^p$, we denote $|x|_q = (\sum_{i=1}^p |x_i|^q)^{1/q}$ for the $\ell^q$-norm of $x$ and we write $|x| = |x|_2$. For a random variable $X$, denote $\|X\|_q = (\E|X|^q)^{1/q}$. For $\beta > 0$, let $\psi_\beta(x) = \exp(x^\beta) - 1$ be a function defined on $[0,\infty)$ and $L_{\psi_\beta}$ be the collection of all real-valued random variables $X$ such that $\E[\psi_\beta(|X| / C)] < \infty$ for some $C > 0$. For $X \in L_{\psi_\beta}$, define $\|X\|_{\psi_\beta} = \inf\{C > 0 : \E[\psi_\beta(|X| / C)]  \le 1 \}$. Then, for $\beta \in [1,\infty)$, $\|\cdot\|_{\psi_\beta}$ is an Orlicz norm and $(L_{\psi_\beta}, \|\cdot\|_{\psi_\beta})$ is a Banach space \cite{ledouxtalagrand1991}. For $\beta \in (0, 1)$, $\|\cdot\|_{\psi_\beta}$ is a quasi-norm, i.e., there exists a constant $C(\beta) > 0$ such that $\|X+Y\|_{\psi_\beta} \le C(\beta) (\|X\|_{\psi_\beta} + \|Y\|_{\psi_\beta})$ holds for all $X, Y \in L_{\psi_\beta}$ \cite{adamczak2008}. Let $\rho(X, Y) = \sup_{t \in \R} |\Prob(X \le t) - \Prob(Y \le t)|$ be the Kolmogorov distance between two random variables $X$ and $Y$. We shall use $C_1,C_2,\dots$ and $K_1,K_2,\dots$ to denote positive and finite constants that may have different values. The symbol $\gtrsim$ (or $\asymp, \lesssim$) denotes greater than (or equal to, smaller than) some rates with constants omitted and $\vee$ (or $\wedge$) means the maximum (or minimum) of terms. 

	Throughout the paper, we assume $n \ge 3$ and $d \ge 3$ (i.e., $\log n \ge 1$ and $\log d \ge 1$) to simplify some statements and all inference works for $d=1,2$.

%\vspace*{-20pt} 
\section{Bootstrap calibration}
\label{sec:bootstrap}
To approximate the distribution of $\overline{T}_n$, we propose the following bootstrap procedure. Let $e_1, \dots, e_n$ be i.i.d.\ $N(0,1)$ random variables that are independent of $X_1^n$. Define the bootstrapped $U$-statistic and test statistic as
%\vspace*{-10pt}
\begin{equation}
\label{eqn:Tn_sharp}
T_n^\sharp = {n}^{1/2} {n \choose 2}^{-1} \sum_{i=1}^n \left\{ \sum_{j=i+1}^{n} h(X_i,X_j) \right\} e_i \quad \text{ and } \quad
\overline{T}_n^\sharp :=  |T_n^\sharp|_\infty = \max_{1 \le k \le d} |T_{nk}^\sharp|.
\end{equation}
We reject $H_0$ if $\overline{T}_n > q_{\overline{T}_n^\sharp \mid X_{1}^{n}} (1-\alpha)$, where 
%\vspace*{-8pt}
\begin{equation*}
q_{\overline{T}_n^\sharp \mid X_{1}^{n}} (1-\alpha) = \inf \left\{t \in \R: \Prob (\overline{T}_n^\sharp \le t \mid X_1^n) \ge 1- \alpha \right\}
\end{equation*}
is the $(1-\alpha)$ quantile of the conditional distribution of $\overline{T}_n^\sharp$ given $X_1^n$. Before presenting the rigorous validity of our bootstrap test procedure in terms of the size and power in Section~\ref{sec:main_results}, we shall explain the reason why it can (asymptotically) separate $H_{0}$ against $H_{1}$. 

First, suppose $H_{0}$ is true, i.e., $X_{1},\dots,X_{n}$ are i.i.d.\ with distribution $F$. Let $g(x) = \E [ h(x, X_1)]$ and $f(x_1, x_2) = h(x_1, x_2) - g(x_1) + g(x_2)$. Due to the anti-symmetry of $h$, we have $f(x_{1},x_{2})= -f(x_2, x_1)$. Then the Hoeffding decomposition of $T_{n}$ is 
\begin{equation}
\label{eqn:hoeffding_decomp_one-sample}
T_{n} = \underbrace{{n}^{-1/2} \sum_{i=1}^{n} {2(n-2i+1) \over n-1} g(X_{i})}_{L_{n}} + \underbrace{{n}^{1/2} {n \choose 2}^{-1} \sum_{1 \le i < j \le n} f(X_{i}, X_{j})}_{R_{n}}. 
\end{equation}
Since $f$ is degenerate, the linear part $L_n$ is expected to be a leading term of $T_n$, and the distribution of $L_{n}$ (denote as ${\cal L}(L_{n})$) can be approximated by its Gaussian analog via matching the first and second moments \cite{cck2016a,chen2018gaussian}. Since $\E[L_{n}]=0$ and  
\[
\Cov(L_{n}) = {4 (n+1) \over 3 (n-1)} \Gamma \approx {4 \over 3} \Gamma \quad \mbox{with} \quad \Gamma = \Cov(g(X_{1})), 
\]
we expect that ${\cal L}(L_{n}) \approx {\cal L}(Z)$, where $Z \sim N(0, 4\Gamma/3)$, for a large sample size $n$. Once the Gaussian approximation result for $T_{n}$ by $Z$ is established, the rest of the work is to compare the distribution of $Z$ and the conditional distribution of $T_n^\sharp$ given $X_{1}^{n}$, both of which are mean-zero Gaussians. Since 
%\vspace*{-10pt}
%\[
$
\Cov(T_n^\sharp \mid X_{1}^{n}) = {4 \over n (n-1)^2} \sum_{i=1}^{n} \sum_{j=i+1}^{n} \sum_{k=i+1}^{n} h(X_{i}, X_{j}) h(X_{i}, X_{k})^{T}, 
$
%\]
standard concentration inequalities for (one-sample) $U$-statistics in \cite{chen2018gaussian} yield that $\Cov(T_n^\sharp \mid X_{1}^{n}) \approx 4 \Gamma / 3$. Thus we expect that ${\cal L}(T_n^\sharp \mid X_{1}^{n}) \approx {\cal L}(Z) \approx {\cal L}(T_{n})$, from which the size validity of the bootstrapped change point test based on $\overline{T}_{n}^{\sharp}$ follows. 

Next, we suppose $H_{1}$ is true, i.e., $X_{1},\dots,X_{m}$ are i.i.d.\ with distribution $F$ and $Y_{1},\dots,Y_{n-m}$ are i.i.d.\ with distribution $G$ such that $G(x) = F(x-\theta)$ and $Y_{i}=X_{i+m}, i=1,\dots,n-m$. To study the power property, the main idea is to consider the two-sample Hoeffding decomposition of $T_{n}$ that is similar to \eqref{eqn:hoeffding_decomp_one-sample}. Suppose $h(x+c,y+c) = h(x,y)$ is \textit{shift-invariant} in terms of location parameter. Let $\theta_{h}=\E[h(X_{1},Y_{1})]$, 
%\vspace*{-5pt}
\begin{equation*}
Gh(x) = \E [ h(x,Y_1) ] - \theta_h = g(x-\theta) -\theta_h, \quad Fh(y) = \E [ h(X_1,y) ] - \theta_h = -g(y) - \theta_h,
\end{equation*}
such that $\E[Gh(X_1)] = \E [Fh(Y_1)] = 0$.  Define 
\begin{equation*}
\breve{f}(x,y) = h(x,y) - Gh(x) - Fh(y) - \theta_h,
\end{equation*} 
which is degenerate such that $\E [\breve{f}(X_1,Y_1)] = \E [\breve{f}(X_1,y)] = \E [\breve{f}(x,Y_1)] = 0$.   
%Note that under $H_0$,  $\theta_h = \vzero$ by anti-symmetric property of $h$, and $Gh(x) = Fh(x) = g(x)$ for $g(x) = \E[h(x, X_1)]$. Then $T_n$ is a self-normalized mean-zero vector in $\R^d$ which has covariance matrix close to ${4 \over 3} \Cov(g(X_1))$.
Under $H_1$, we may split the $U$-statistic sum as
%\vspace*{-8pt}
\[
\sum_{1 \le i<j \le n} \! h(X_i, X_j) = \sum_{\substack{1 \le i<j \le m \\ m+1 \le i<j \le n}} \! h(X_i, X_j) +   \sum_{\substack{1 \le i \le m \\ 1 \le j \le n-m}} \! h(X_i, Y_j), 
\]
where the first sum on the r.h.s.\ of the above equation has mean zero (again, due to the anti-symmetry of $h$). Thus, to study the power of $\overline{T}_{n}$ (and its bootstrapped version $\overline{T}_{n}^{\sharp}$), it suffices to analyze the second sum on the r.h.s.\ of the last display above, which is a two-sample $U$-statistic $V_{n}$ that admits the following Hoeffding decomposition: 
%\vspace*{-10pt}
{\small 
	\begin{align} 
	V_{n} &= \sum_{i=1}^{m} \sum_{j=1}^{n-m} h(X_i, Y_j) \nonumber\\
	\label{eqn:hoeffding_decomp_two-sample}
	&= m (n-m) \theta_h + (n-m) \sum_{i=1}^{m} Gh(X_i) + m \sum_{j=1}^{n-m} Fh(Y_j) +  \sum_{i=1}^{m} \sum_{j=1}^{n-m} \breve{f}(X_i, Y_j).
	\end{align}}
Since the last three sums on the r.h.s.\ of (\ref{eqn:hoeffding_decomp_two-sample}) have mean zero, the power of the proposed test is determined by the magnitude of $\theta_h$ and the sampling distributions of other terms involving no $\theta_h$. For the latter, all of those distributions can be well estimated and controlled as in $H_{0}$ since they do not contain the change point signal. Thus, if $|\theta_{h}|_{\infty}$ obeys a minimal signal size requirement, then the power of $\overline{T}_{n}^{\sharp}$ would tend to one. 
%Note that $\theta_h =0$ under $H_0$. 

\begin{rmk}
	It is interesting to note that our bootstrapped $U$-statistic $T_{n}^{\sharp}$ in~\eqref{eqn:Tn_sharp} is closely related to the jackknife multiplier bootstrap (JMB) proposed in~\cite{chen2018gaussian} for high-dimensional $U$-statistics and in~\cite{chenkato2017a} for infinite-dimensional $U$-processes with symmetric kernels. In both settings, the (unobserved) H\'ajek projection process $g(\cdot)$ is estimated by the jackknife procedure and a multiplier bootstrap is applied to the jackknife estimated process. In our change point detection context, since the kernel is anti-symmetric, averaging the empirical H\'ajek process by jackknife would simply be an estimate of zero. Thus, we may only use half (e.g., a triangular array index subset $i < j$) of the JMB to estimate $g(\cdot)$. In view of this connection, we call our bootstrap method is a JMB tailored to change point detection. 
	\qed
\end{rmk}

%	\vspace*{-28pt} 
\section{Theoretical properties}
\label{sec:main_results}
Let $X, X'$ be i.i.d.\ random vectors with distribution $F$. Recall that $g(x) = \E[h(x, X)]$ and $f(x_1,x_2) = h(x_1,x_2) - g(x_1) + g(x_2)$ in the Hoeffding decomposition \eqref{eqn:hoeffding_decomp_one-sample}. Then $\E[g(X)] = 0$ and $\E[f(x_1,X')] = \E[f(X,x_2)] = 0$ for all $x_{1}, x_{2} \in \R^{p}$ (i.e., $f$ is degenerate). Denote $\Gamma = \Cov(g(X)) = \E [g(X)^T g(X)]$. 
In this section, we will characterize theoretical properties through $d$ (the dimension of $h$) and $\theta_h$ (the expected mean change of $h(X, X+\theta)$) rather than $p$ (the original dimension of data) or $\theta$ (the original location shift parameter) since the whole procedure is constructed on top of $h(X,X')$.

%\vspace*{-12pt}
\subsection{Size validity}
We first establish the validity of the bootstrap approximation to the distribution of $\overline{T}_n$ under $H_0$.  
Let $\ub > 0$ be a constant and $D_n \ge 1$ which is allowed to increase with $n$. We make the following assumptions. 
%\vspace*{-5pt} 
\begin{enumerate}[leftmargin=2cm]
	\item[(A1)] $\E g_j(X)^2 \ge \underline{b}^{2}$ for all $j = 1,\dots,d$.
	\item[(A2)] $\E |h_{j}(X, X')|^{2+k} \le D_n^k$ for all $j = 1,\dots,d$ and $k=1,2$.
	\item[(A3)] $\|h_j(X, X')\|_{\psi_1} \le D_n$ for all $j = 1,\dots,d$.
\end{enumerate}
%\vspace*{-5pt}
Condition (A1) is a non-degeneracy requirement for the kernel $h$. Without (A1), bootstrap may approximate constant observation through a random process so that our method is not valid.  Conditions (A2) and (A3) impose moment conditions on the kernel $h$ coupled with the data distribution $F$.   For instance, when the kernel is bounded, we do not explicitly impose additional assumption on the data distribution $F$. Thus conditions (A2) and (A3) are more robust than the canonical linear kernel when the data distribution has polynomial tails. In our high-dimensional setting, we allow both $p$ and $d$ to increase with $n$.

%They implicitly relax constrains on $F$, for instance, by using the sign kernel to have (A2) and (A3) satisfied automatically. Hence, our conditions are mild for data.} 

\begin{thm}[Size validity of bootstrap test under $H_{0}$]
	\label{thm:gaussian_approx_rate}
	Suppose $H_0$ is true and (A1)-(A3) hold. Let $\gamma \in (0, e^{-1})$ such that $\log(1/\gamma) \le K \log(nd)$ for some constant $K>0$. Then there exists a constant $C := C(\ub, K)$ depending only on $\ub$ and $K$ such that 
	%	\vspace*{-8pt}
	\begin{equation}
	\label{eqn:gaussian_approx_rate}
	\rho(\overline{T}_{n}, \overline{T}_{n}^{\sharp} \mid X_{1}^{n}) := \sup_{t \in \R} \left| \Prob(\overline{T}_n \le t) - \Prob(\overline{T}_n^\sharp \le t \mid X_{1}^{n}) \right| \le C  \varpi_n
	\end{equation}
	holds with probability at least $1-\gamma$, where 
	%	\vspace*{-8pt}
	\begin{equation}
	\label{eqn:gaussian_approx_rate_2}
	\varpi_n = \left\{ { D_n^2 \log^7(n d) \over n } \right\}^{1/6}.
	\end{equation}
	Consequently, we have 
	%\vspace*{-5pt}
	\begin{equation}
	\label{eqn:error_in_size}
	\sup_{\alpha \in (0,1)} \left| \Prob(\overline{T}_{n} \le q_{\overline{T}_n^\sharp \mid X_{1}^{n}} (\alpha)) - \alpha \right| \le C \varpi_{n} + \gamma. 
	\end{equation}
	In particular, if $\log{d} \! = \! o(n^{1/7})$, then $\Prob(\overline{T}_{n} \! \le \! q_{\overline{T}_n^\sharp \! \mid X_{1}^{n}} \! (\alpha)) \! \to \! \alpha \text{ uniformly in } \alpha \! \in \! (\! 0,\! 1\! ) \! \text{ as } n \! \to\! \infty$. 
\end{thm}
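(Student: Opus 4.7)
The plan is to establish the Kolmogorov bound \eqref{eqn:gaussian_approx_rate} by inserting an intermediate Gaussian and then derive the error-in-size bound \eqref{eqn:error_in_size} from it by a standard quantile argument. First, I start from the Hoeffding decomposition \eqref{eqn:hoeffding_decomp_one-sample}, which writes $T_n = L_n + R_n$ with $L_n$ a sum of independent mean-zero vectors and $R_n$ a degenerate U-statistic. A direct computation gives $\Sigma_{L} := \Cov(L_n) = \frac{4(n+1)}{3(n-1)}\Gamma$, so under (A1) the diagonal of $\Sigma_L$ is bounded below by $\frac{4}{3}\ub^2$. I intend to bound $\rho(\overline{T}_n, |Z|_\infty)$ with $Z \sim N(0, \Sigma_L)$ by (i) applying the high-dimensional max-type Gaussian approximation of Chernozhukov--Chetverikov--Kato to $L_n$ (the $L_n$ summands are independent with the $\psi_1$ and moment controls inherited from (A2)--(A3) through $g$), which produces the rate $\varpi_n$, and (ii) a Strassen-type coupling plus an anti-concentration bound to absorb $|R_n|_\infty$ into the $\varpi_n$ budget. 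Step (ii) is where the degeneracy of $f$ is crucial: the variance of $R_n$ in each coordinate is of order $n^{-1}$, so a maximal inequality for canonical U-statistics (as in Chen (2018)) gives $\Prob(|R_n|_\infty > \eta) \le \gamma$ for $\eta \ll \varpi_n$ up to logarithmic factors, using (A2) with $k=2$ to handle the maximum over $d$ coordinates.

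On the bootstrap side, conditional on $X_1^n$ the vector $T_n^\sharp$ is a centered Gaussian, so the task reduces to comparing its conditional covariance $\widehat{\Sigma}^\sharp := \Cov(T_n^\sharp \mid X_1^n)$ to $\Sigma_L$ in max norm and then invoking the Chernozhukov--Chetverikov--Kato Gaussian comparison lemma for the $\ell^\infty$-norm, which translates an entrywise covariance difference of size $\Delta$ into a Kolmogorov distance of order $\Delta^{1/3}\log^{2/3}(d)$. The explicit expression
\[
\widehat{\Sigma}^\sharp = \frac{4}{n(n-1)^2} \sum_{i=1}^{n} \sum_{j=i+1}^{n} \sum_{k=i+1}^{n} h(X_i,X_j) h(X_i,X_k)^T
\]
is, up to negligible end-effects, a third-order U-statistic whose population mean is $\E[g(X)g(X)^T] = \Gamma$ up to the factor $\tfrac{4(n+1)}{3(n-1)}$. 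I will apply the maximal/concentration inequality for U-statistics from Chen (2018) coordinatewise, combined with a union bound over the $d^2$ entries, to get $|\widehat{\Sigma}^\sharp - \Sigma_L|_\infty \lesssim D_n^2 (\log(nd)/n)^{1/2}$ with probability $\ge 1-\gamma$ (the $\log(1/\gamma) \lesssim \log(nd)$ constraint is used exactly here); cubing this and multiplying by $\log^{2/3}(d)$ keeps the rate within $\varpi_n$. Combining with the previous paragraph by the triangle inequality yields \eqref{eqn:gaussian_approx_rate}.

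The main obstacle is the simultaneous control of the degenerate remainder $R_n$ and of the bootstrap conditional covariance $\widehat{\Sigma}^\sharp$ uniformly over $d$ (possibly $\gg n$) coordinates, since both require sharp maximal inequalities for degenerate (respectively, higher-order) U-statistics with only the $(2+k)$-moment and $\psi_1$ controls in (A2)--(A3); anything weaker than the CCK-type polynomial-in-$\log(nd)$ dependence would blow up the $1/6$ exponent in $\varpi_n$. Finally, to deduce \eqref{eqn:error_in_size}, let $E$ be the event on which \eqref{eqn:gaussian_approx_rate} holds, so $\Prob(E^c) \le \gamma$. On $E$, by definition of the bootstrap quantile, $\Prob(\overline{T}_n \le q_{\overline{T}_n^\sharp\mid X_1^n}(\alpha)\mid X_1^n\text{-law comparison}) - \alpha$ is bounded in absolute value by $C\varpi_n$, and integrating over the complement contributes at most $\gamma$, giving \eqref{eqn:error_in_size}; the final asymptotic statement follows by choosing $\gamma = \gamma_n \to 0$ polynomially in $nd$, which is compatible with $\log d = o(n^{1/7})$ and forces $\varpi_n \to 0$.
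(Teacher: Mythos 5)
Your plan matches the paper's proof essentially step for step: the same Hoeffding decomposition $T_n=L_n+R_n$, a CCK-type Gaussian approximation for $\overline{L}_n$ at the target covariance $\tfrac{4}{3}\Gamma$, absorption of the degenerate remainder via a maximal inequality for canonical $U$-statistics combined with Nazarov's anti-concentration, concentration of the conditional bootstrap covariance $4\hat\Gamma_n$ around $4\Gamma/3$ via a third-order $U$-statistic tail bound (the paper's Lemma 5.1, which also splits off the diagonal $j=k$ terms contributing the negligible $\Gamma_2/(2(n-1))$ bias you gloss as "end-effects"), a Gaussian comparison lemma, and the standard quantile argument for the error-in-size. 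The only cosmetic difference is your mention of a "Strassen-type coupling" for $R_n$; the paper simply uses Markov's inequality on $\E|R_n|_\infty$ at an optimized threshold, which is what your budget calculation amounts to anyway.
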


Theorem~\ref{thm:gaussian_approx_rate} constructs non-asymptotic bootstrap validity in theory and guarantees that the $\alpha$-th quantile of bootstrapped statistic $\overline{T}_n^\sharp|X_1^n$ is always close to the $\alpha$-th quantile of test statistic $\overline{T}_{n}$. Moreover, the error bound is uniform over $\alpha \in (0,1)$. The technique for proving Theorem~\ref{thm:gaussian_approx_rate} extends the Gaussian approximation theory for $U$-statistics in \cite{chen2018gaussian}, which focuses on symmetric kernels.

\begin{rmk}[Comparisons with the CUSUM-based statistics]
	\label{rmk:no_boundary_remove_H0}
	\cite{jirak2015} and \cite{yuchen2017finite} propose CUSUM-based bootstrap tests that require the removal of boundary points for detecting change points in high-dimensional mean vectors. Specifically, for the CUSUM statistics \eqref{eqn:cusum_mean_Rp} considered in \cite{yuchen2017finite}, the test statistic is of the form $S_{n} = \max_{\underline{s} \le s \le n-\underline{s}} |Z_{n}(s)|_{\infty}$ for some boundary removal parameter $\underline{s} \in [1,n/2]$. Accordingly, the Gaussian multiplier bootstrap version of $Z_{n}(s)$ is defined as: 
	%	\vspace*{-5pt}
	\[
	Z_{n}^{\sharp}(s) = \left({n-s \over ns}\right)^{1/2} \sum_{i=1}^{s} e_{i} (X_{i}-\overline{X}_{s}^{-}) - \left({s \over n(n-s)}\right)^{1/2} \sum_{i=s+1}^{n} e_{i} (X_{i} - \overline{X}_{s}^{+}),
	\]
	where $\overline{X}_{s}^{-} = s^{-1} \sum_{i=1}^{s} X_{i}$ and $\overline{X}_{s}^{+} = (n-s)^{-1} \sum_{i=s+1}^{n} X_{i}$ are the left and right sample averages at $s$, respectively. 
	$Z_{n}^{\sharp}(s)$ sequentially inspects the two-sample distributions before and after all possible change point locations in the interval $[\underline{s}, n-\underline{s}]$.
	Then for the special case of linear kernel $h(x,y)=x-y$ and distribution $F$ satisfying the conditions (A1), (A2), and (A3), the rate of convergence for $\overline{S}_{n}^{\sharp} := \max_{\underline{s} \le s \le n-\underline{s}} |Z_{n}^{\sharp}(s)|_{\infty}$ shown in \cite{yuchen2017finite} obeys 
	%	\vspace*{-10pt}
	\[
	\rho(\overline{S}_{n}, \overline{S}_{n}^{\sharp} \mid X_{1}^{n}) \le C \left\{ { D_n^2 \log^7(n d) \over \underline{s} } \right\}^{1/6}
	\]
	with probability at least $1-\gamma$.
	Comparing the last display with the rate of convergence for $\rho(\overline{T}_{n}, \overline{T}_{n}^{\sharp} \mid X_{1}^{n})$ in~\eqref{eqn:gaussian_approx_rate} and~\eqref{eqn:gaussian_approx_rate_2}, we see that the JMB method proposed here has better statistical properties than the Gaussian multiplier bootstrap $\overline{T}_{n}^{\sharp}$ without removing any boundary points in computing $\overline{T}_{n}$ and $\overline{T}_{n}^{\sharp}$. Consequently this will reduce the error-in-size~\eqref{eqn:error_in_size} for our bootstrap calibration $\overline{T}_{n}^{\sharp}$. Empirical evidence for our algorithm with smaller error-in-size can be found in Section~\ref{sec:simulation}. The main reason for the improved rate is due to the fact that we can approximate the distribution of $\overline{T}_{n}$ based on the majority of the data points in the entire sample $X_{1},\dots,X_{n}$.  In addition, the proposed change point detector $\overline{T}_{n}$ and its JMB calibration $\overline{T}_{n}^{\sharp}$ can be viewed as a {\it nonlinear} and {\it one-pass} version of the CUSUM statistics. 
	%		Note that the CUSUM statistics $Z_{n}^{\sharp}(s)$ sequentially inspects the two-sample distributions before and after all possible change point locations in the interval $[\underline{s}, n-\underline{s}]$. So the computational cost for $\overline{S}_{n}$ is $O(n^{2}p)$. In contrast, the computational cost for $\overline{T}_{n}$ with the linear kernel is $O(np)$. 
	\qed
\end{rmk}

	\begin{rmk}[Improved size validity of the bootstrap test] \label{rmk:improved_size}
		Proof of Theorem~\ref{thm:gaussian_approx_rate} is based on the Gaussian and bootstrap results for linear partial sums in high dimensions \cite{cck2016a} and the maximal inequality for degenerate $U$-statistics \cite{chenkato2017a}. Since the work of \cite{cck2016a}, there have been substantial progresses being made to improve the rate of convergence of Gaussian approximation for partial sums under various settings. For instance, \cite{cck2020} derived nearly optimal bound for the Gaussian approximation over hyper-rectangles. Tailored to our change point detection setting, if the correlation matrix of $L_n$ is {\it strongly} non-degenerate (i.e., the smallest eigenvalue of the correlation matrix of $L_n$ is strictly positive), then the rate of Gaussian approximation to $L_n$ can be sharpened to $n^{-1/2} (\log n) (\log d)^{3/2}$. Combining this with the maximal inequality for $R_n$, we can improve the overall bound for $\rho(\overline{T}_{n}, \overline{T}_{n}^{\sharp} \mid X_{1}^{n})$ to $n^{-1/4} (\log (nd))^{1/2} (\log n) (\log d)^{1/2}$.	 
		%Note that, the dominating rate of $n^{-1/4}$ in our proof comes from the Markov inequality when bounding the tail of $|R_{n}|_{\infty} $. Hence, our $U$-statistic approach doesn't achieve the optimal $n^{-1/2}$. 
		
		%Now, we give the new theorem developed on top of \cite{cck2020}. Detailed proof can be found in Appendix.
		Let $\sigma_*$ be the square root of the smallest eigenvalue of the correlation matrix of $g(X)$. We assume that
		\begin{enumerate}[leftmargin=2cm,align=left]
			\item[(A2')] $\E |h_{j}(X, X')|^{4} \le D_n^2$ for all $j = 1,\dots,d$.
			\item[(A3')] $\|h_j(X, X')\|_{\psi_2} \le D_n$ for all $j = 1,\dots,d$.
		\end{enumerate}
		
		\begin{thm}[Improved size validity of the bootstrap test under $H_{0}$]
			\label{thm:gaussian_approx_rate_improve}
			Suppose $H_0$ is true, $\sigma_*^2>0$, and (A1), (A2') and (A3') hold. Let $\gamma \in (0, e^{-1})$ such that $\log(1/\gamma) \le K \log(nd)$ for some constant $K>0$. Then there exists a constant $C := C(\ub, \sigma_*, K)$ depending only on $\sigma_*, \ub$ and $K$ such that 
			%	\vspace*{-8pt}
			\begin{equation}
			\label{eqn:gaussian_approx_rate_improve}
			\rho(\overline{T}_{n}, \overline{T}_{n}^{\sharp} \mid X_{1}^{n}) \le C  \varpi'_n
			\end{equation}
			holds with probability at least $1-\gamma$, where 
			%	\vspace*{-8pt}
			\begin{equation}
			\label{eqn:gaussian_approx_rate_2_improve}
			\varpi'_n =  {D_n (\log (nd))^{1/2} (\log n) (\log d)^{1/2} \over n^{1/4}}.
			\end{equation}
		\end{thm}
		\qed
	\end{rmk}

\subsection{Power analysis}
Next, we analyze the power of the proposed testing under $H_1$ in terms of the change point signal $\theta_h = \E[h(X,X'+\theta)]$ and its location $m$.	
In our $U$-statistic framework, the test implicitly depends on $\theta$ through $\theta_h$, which the signal strength characterization will relate to. As we have discussed earlier, the signal magnitudes between $\theta$ and $\theta_{h}$ can be preserved for the robust sign kernel. Under $H_{1}$, we assume the following conditions. 
%\vspace*{-5pt}
\begin{itemize}[leftmargin=2cm]
	\item[(B1)] $h$ is \emph{shift-invariant}: $h(x+c, y+c) = h(x,y)$.
	\item[(B2)]  $\E |h_{j}(X, X'+\theta) - \E[h_{j}(X, X'+\theta)] |^{2+\ell} \le D_n^\ell$ for all $j = 1, \cdots, d$ and $\ell =1,2$.
	\item[(B3)]  $|| h_{j}(X, X'+\theta) - \E[h_{j}(X, X'+\theta)] ||_{\psi_{1}} \le D_{n}$ for all $j = 1, \cdots, d$. 
\end{itemize}
%\vspace*{-5pt}
Condition (B1) is a natural requirement since the within-sample noise cancellation by $h$ should be invariant under data translation in the location-shift model (\ref{eqn:location_shift_model}). Conditions (B2) and (B3) are in parallel with Condition (A2) and (A3) in the sense that they quantify the moment and tail behaviors of the centered version of the kernel $h$ (w.r.t.\ the distribution $F$). In particular, Conditions (B2) and (B3) separate the location-shift signal from the mean-zero noise, and if $\theta = 0$, Conditions (B2) and (B3) reduce to Conditions (A2) and (A3). Our next theorem characterizes the minimal signal strength for detecting the change point under the alternative hypothesis $H_{1}$. 

\begin{thm}[Power of bootstrap test under $H_1$]
	\label{thm:power_signal_rate}
	Suppose $H_1$ is true and (B1)-(B3) hold in addition to (A1)-(A3). Let $\zeta \in (0, e^{-1})$ such that $\log(1/\zeta) \le K \log(nd)$ for some constant $K>0$. Suppose $m \wedge (n-m) \ge K' \log^{5/2} (nd) $ for some large enough $K'>0$. If 
	%	\vspace*{-5pt}
	\begin{equation}
	\label{eqn:signal_rate}
	m(n-m) |\theta_h|_\infty > K_0 D_n n^{3/2} \log^{1/2} ({nd / \alpha}) + C_1(\ub) n^{3/2} \log^{1/2} (\zeta^{-1}) \log^{1/2} (d),
	\end{equation}
	for some constants $K_0$ and $C_1(\ub)$, then 
	%	\vspace*{-5pt}
	%	\begin{equation*}
	%	\setlength\abovedisplayskip{2pt}
	%	\setlength\belowdisplayskip{2pt}
	$
	\Prob ( \overline{T}_n > q_{\overline{T}_n^\sharp \mid X_{1}^{n}} (1-\alpha) ) \ge 1- \zeta - C_2(\ub) \varpi_n.
	$
	%	\end{equation*}
\end{thm}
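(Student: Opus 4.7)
The plan is a signal-dominates-noise argument: lower-bound $\overline T_n$ by the mean-shift signal minus its fluctuation, upper-bound the JMB quantile $q_{\overline T_n^\sharp\mid X_1^n}(1-\alpha)$ via Gaussian concentration on the conditional Gaussian law, and verify that~\eqref{eqn:signal_rate} is exactly the condition under which the signal strictly exceeds the sum of the two bounds. Starting from the block decomposition of $T_n$ into the two within-block antisymmetric $U$-statistics on the $X$- and $Y$-blocks and the cross-sample sum $V_n = \sum_{i\le m,\,j\le n-m} h(X_i,Y_j)$, antisymmetry and i.i.d.-within-block yield $\E[T_n] = \tfrac{2m(n-m)}{\sqrt n(n-1)}\theta_h$, so the signal obeys $|\E[T_n]|_\infty \ge c\,m(n-m)|\theta_h|_\infty/n^{3/2}$.

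For the fluctuation $|T_n - \E[T_n]|_\infty$, I would treat the three pieces separately: the two within-block sums are one-sample antisymmetric $U$-statistics controlled by the Hoeffding decomposition~\eqref{eqn:hoeffding_decomp_one-sample} applied block-wise with (A1)--(A3), and the centered cross term $V_n - \E[V_n]$ is expanded via~\eqref{eqn:hoeffding_decomp_two-sample} with (B2)--(B3). Nemirovski/Bernstein-type maximal inequalities applied to the linear H\'ajek projections, together with standard concentration for the degenerate remainders, then give $|T_n - \E[T_n]|_\infty \le C_1(\underline b)\, n^{-1/2}\log^{1/2}(\zeta^{-1})\log^{1/2}(d) + (\text{lower order})$ on an event of probability at least $1-\zeta/2$; after the $\sqrt n(n-1)/2$ rescaling, this matches the second summand on the right of~\eqref{eqn:signal_rate}.

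For the JMB quantile I use that conditional on $X_1^n$, $T_n^\sharp \sim N(\vzero,\hat\Sigma)$ with $\hat\Sigma = \tfrac{4}{n(n-1)^2}\sum_{i=1}^n S_iS_i^\top$, $S_i = \sum_{j>i} h(X_i,X_j)$, so the Gaussian maximum bound yields $q_{\overline T_n^\sharp\mid X_1^n}(1-\alpha) \le C\sqrt{\max_k \hat\Sigma_{kk}\log(d/\alpha)}$. Splitting $S_i = (S_i - \E[S_i]) + \E[S_i]$, moment bounds from (A2)--(A3), (B2)--(B3) together with $m\wedge(n-m)\ge K'\log^{5/2}(nd)$ produce a $CD_n^2$ bound for the centered part on a high-probability event; the bias $\E[S_i] = (n-m)\theta_h\,\vone(i\le m)$ contributes at most $m(n-m)^2|\theta_h|_\infty^2/n^3$ to $\max_k\hat\Sigma_{kk}$, and, once passed through the Gaussian max, this contribution is strictly subdominant to the squared signal $\bigl(m(n-m)|\theta_h|_\infty/n^{3/2}\bigr)^2$ precisely when $m\wedge(n-m) \gtrsim \log(d/\alpha)$, guaranteed by the minimum-block hypothesis. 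Hence $q_{\overline T_n^\sharp\mid X_1^n}(1-\alpha) \le K_0 D_n \log^{1/2}(nd/\alpha)$ with probability $\ge 1 - C\varpi_n$, matching the first summand of~\eqref{eqn:signal_rate}. The rejection event $\{\overline T_n > q_{\overline T_n^\sharp\mid X_1^n}(1-\alpha)\}$ therefore contains $\{|\E[T_n]|_\infty > |T_n - \E[T_n]|_\infty + q_{\overline T_n^\sharp\mid X_1^n}(1-\alpha)\}$, which under~\eqref{eqn:signal_rate} holds with probability at least $1 - \zeta - C_2(\underline b)\varpi_n$.

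The hardest step is the conditional-variance control of $\hat\Sigma$. Under $H_1$ every $S_i$ with $i\le m$ carries a bias $(n-m)\theta_h$ that, if not carefully separated out, inflates $\max_k\hat\Sigma_{kk}$ by a $|\theta_h|_\infty^2$-sized term and makes the JMB quantile grow linearly in $|\theta_h|_\infty$, which would defeat the signal term on the same scale. Showing that this inflation is absorbed requires the block-wise Hoeffding expansion of each $S_i$ together with the minimum-block assumption $m\wedge(n-m)\ge K'\log^{5/2}(nd)$, so that the linear H\'ajek deviations concentrate tightly enough for $D_n^2$ to remain the effective variance proxy and for the residual bias to be absorbed by the signal under the signal-strength hypothesis~\eqref{eqn:signal_rate}.
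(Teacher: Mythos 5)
Your proposal follows essentially the same architecture as the paper's proof: a signal-versus-fluctuation comparison plus an upper bound on the bootstrap quantile, with the conditional covariance $\hat{\Gamma}_n$ under $H_1$ correctly identified as the delicate step (the paper handles the $\theta_h$-bias inflation of $\hat{\Gamma}_n$ exactly as you describe, via Lemma \ref{lem:cov_Tn_sharp_H1} together with the hypothesis $m\wedge(n-m)\gtrsim \log^{5/2}(nd)$). The one organizational difference is that the paper does not center $T_n$ at its mean: it couples $T_n$ with the noise-only statistic $T_n^{\xi}=T_n(\xi_1^n)$, uses shift-invariance (B1) to reduce $T_n-T_n^{\xi}$ to the two-sample quantity $\tilde{\Delta}$ with mean $m(n-m)\theta_h$, and absorbs the remaining noise into the quantile $q_{\overline{T}_n^{\xi}}(1-\beta_n)$, which is bounded using the $H_0$ Gaussian approximation already established in Theorem \ref{thm:gaussian_approx_rate}; this is precisely where the $C_2(\ub)\varpi_n$ loss and the $\log^{1/2}(\zeta^{-1})\log^{1/2}(d)$ term in \eqref{eqn:signal_rate} originate. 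Your direct-concentration route for $|T_n-\E T_n|_\infty$ is a workable substitute, but it contains a bookkeeping slip: since $T_n$ already carries the $\sqrt{n}\binom{n}{2}^{-1}$ normalization, the non-degenerate H\'ajek part of $T_n-\E T_n$ is $O_P\bigl(\sqrt{\Gamma_{jj}\log(d/\zeta)}\bigr)$ with \emph{no} $n^{-1/2}$ decay, and multiplying your stated bound $n^{-1/2}\log^{1/2}(\zeta^{-1})\log^{1/2}(d)$ by $\sqrt{n}(n-1)/2$ gives order $n\log^{1/2}(\zeta^{-1})\log^{1/2}(d)$, not the $n^{3/2}$ scaling of the second summand of \eqref{eqn:signal_rate}. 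The correct (larger) fluctuation bound still balances against the signal $2m(n-m)|\theta_h|_\infty/n^{3/2}$ to yield the stated condition, so this is an arithmetic error rather than a gap in the argument.
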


Theorem~\ref{thm:power_signal_rate} provides the lower bound of signal strength that is related to change point location $m$ and size level $\alpha$, as well as sample size $n$ and kernel dimension $d$.
Markedly, our theory derives the tail probability control on the maximum of two-sample order-two $U$-statistics.

\begin{rmk}[Interpretation of Theorem~\ref{thm:power_signal_rate}]
	\label{rmk:power_boundary}
	Note the first term on the r.h.s.\ of (\ref{eqn:signal_rate}) reflects the Type I error of the bootstrap test (coming from $\alpha$ and $\varpi_n$ in Theorem~\ref{thm:gaussian_approx_rate}), while the second term reflects the connection to the Type II error under $H_{1}$ through $\zeta$. If the location shift happens in the middle, i.e., $m \asymp n$, then $m(n-m) \asymp n^2$. In this case, the signal strength has to obey $|\theta_h|_\infty \gtrsim D_n n^{-1/2} \log^{1/2}(nd / \alpha)$, which matches the power result for the bootstrap test based on the CUSUM statistics in \cite{yuchen2017finite} (cf. Theorem 3.3 therein). If the location shift occurs at the boundary, for instance $m \wedge (n-m) \asymp n^{\beta}$ for $\beta < 1/2$, then the signal has to be $|\theta_h|_\infty \gtrsim n^{1/2-\beta}$, which diverges to infinity. Thus, under our framework, detection is possible for local alternative when the change point location satisfies $m \wedge (n-m) \gtrsim D_n n^{1/2} \log^{1/2} (nd)$.
	\qed
\end{rmk}

\begin{rmk}[Rate optimality for sparse alternative]
	\label{rmk:rate_optimal}
	In \cite[Theorem 1]{liu2019minimax}, the authors derived the minimax rate of detection boundary for single change point case where $F$ is $p$-dimensional Gaussian distribution with independent entries. Suppose the location shift only occurs in the first $k$ components with the same size of $\rho > 0$, i.e.\
	\[
	\theta = (\underbrace{\rho, \dots, \rho}_{k \; \text{times}}, 0, \dots, 0)^\top.
	\] 
	For sparse regime when $ { k} =|\theta|_0 < \sqrt{p \log \log (8n)}$, let $|\theta_h|_2^2 \approx |\theta|_2^2  =  k \rho^2 $ under local alternative, then their minimax result reads as 
	{
		\[
		{m(n-m) \over n} k \rho^2 \gtrsim \rho^*(p,n,k) \asymp  \left(k \log \{{ep\log \log (8n) \over k^2}\} \vee \log \log (8n) \right).
		\]
		Note that, $m(n-m) = (m \wedge (n-m))((m \vee (n-m)) \asymp (m \wedge (n-m)) n$. Hence, their result indicates that $\rho \gtrsim (m \wedge (n-m))^{-1/2} \sqrt{\log \{{ep\log \log (8n) \over k^2}\} \vee {1 \over k}\log \log (8n) }$. The rate inside square root is up to a logarithm factor through $n,p$ (for example by plugging in $k=1$).
		On the other hand, our (\ref{eqn:signal_rate}) in Theorem~\ref{thm:power_signal_rate} requires the lower bound $\rho  \gtrsim (m \wedge (n-m))^{-1} n^{1/2}$ up to $\log^{1/2} (nd)$.
		If $m \wedge (n-m)$ is bounded away from boundaries, i.e., $m \asymp n-m \asymp n$, then our result is minimax optimal. 
	}
	\qed
\end{rmk}

{
	\begin{rmk}[Extension of the bootstrap test to time series data] \label{rmk:extension_time_series}
		When the noise sequence $\xi_i$ in the location-shift model~\eqref{eqn:location_shift_model} is a stationary time series, we need to modify the bootstrap test statistic to adjust for the temporal dependency because $\E h(X_i, X_j)$ is no longer zero and there is a bias term to be calibrated in the bootstrap test. Nonetheless, if the time series $\xi_i$ is weakly dependent, then the bias term decays to zero when $|i-j|$ increases. This motivates us to consider a {\it trimmed} version of the bootstrap test by removing summands within close indices in $T_n$ (and thus $T_n^\sharp$). Let the integer $0 \le M < m \wedge (n-m)$ be a trimming parameter. We define a generalized $U$-statistic as 
		\begin{equation}
		\label{eqn:Tn_scaled_TS}
		T_n^\natural =  {n}^{1/2}{n \choose 2}^{-1} \sum_{\substack{i<j\\|i-j| > M}} h(X_i, X_j) = {2 \over {n}^{1/2}{(n-1)}} \sum_{i=1}^{n-M-1}\sum_{j=i+M+1}^{n} h(X_i, X_j).
		\end{equation}
		%$T_n^\natural$ is a (scaled) sum of all pairs of $X_i$ and $X_j$ whose indices are with ascending order and more than $M$ apart from each other. 
		Under $H_0$, we expect $h(X_i, X_j)$ behaves similarly to the i.i.d.\ scenario for large $M$ since the dependency between $X_i$ and $X_j$ is weak. Thus, we have $\E h(X_i, X_j) \approx 0$ for $|i-j|>M$ and $\E T_n^\natural \approx 0$. Under $H_1$, with $\E h(X_i, X_j) \approx \theta_h$ for $i \leq m < j$ and $|i-j|>M$, we have
		\begin{align}
		\label{eqn:Tn_natural_mean}
		\E T_n^\natural &\approx {n}^{1/2}{n \choose 2}^{-1} \left[ \sum_{i=1}^{m-M} \sum_{j=m+1}^{n} + \sum_{i=n-M+1}^{m} \sum_{j=i+M+1}^{n} \right]  \E h(X_i, X_j) \nonumber\\
		& \approx 2 n^{-3/2} \left[ m(n-m) - (M+1)M/2 \right] \theta_h.
		\end{align}
		There is a natural trade-off in choosing the trimming parameter $M$ to control the effective signal strength $\E T_n^\natural$ under $H_0$ and $H_1$. For larger values of $M$, calibration of the distribution of $T_n^\natural$ would be more accurate. However, the compromise of signal strength in~\eqref{eqn:Tn_natural_mean} would also be larger. Thus, it would be harder to detect change point (i.e., to separate $H_0$ from $H_1$) when the temporal dependence of data is stronger.
		%When the temporal dependence of data is strong, band $M$ need to be large to achieve the approximation in \eqref{eqn:Tn_natural_mean}. But $\E [\overline{T}_N^\natural|H_1] \approx 2n^{-2/3}\left[ m(n-m) - (M+1)M/2 \right] |\theta_h|_\infty$ still holds. So large band $M$ results in compromise of signal size. Hence, $M$ controls the trade-off between effective signal and sample size.
		%\begin{align}
		%\label{eqn:Tn_natural_mean}
		%\E T_n^\natural &= {n}^{1/2}{n \choose 2}^{-1} \left[ \sum_{1 \le i<i+M<j\le m } + \sum_{m+1 \le i<i+M<j \le n }  + \sum_{\substack{1<i\le m <j\le n\\ |i-j|> M}} \right]  \E h(X_i, X_j) \nonumber\\
		%&\approx {n}^{1/2}{n \choose 2}^{-1} \left[0+0+ \sum_{\substack{1<i\le m <j\le n\\ |i-j|> M}}\E h(X_i, X_j) \right]\nonumber\\
		%&= {n}^{1/2}{n \choose 2}^{-1} \left[ \sum_{i=1}^{m-M} \sum_{j=m+1}^{n} + \sum_{i=n-M+1}^{m} \sum_{j=i+M+1}^{n} \right]  \E h(X_i, X_j) \nonumber\\
		%&\approx {n}^{1/2}{n \choose 2}^{-1} \left[ (m-M)(n-m) + (2n-2m-M-1)M/2 \right] \theta_h \nonumber\\
		%& = {n}^{1/2}{n \choose 2}^{-1} \left[ m(n-m) - (M+1)M/2 \right] \theta_h.
		%\end{align}
		Similarly as the i.i.d.\ noise case, we can use the $\ell^\infty$-norm to construct our test statistic 
		\begin{equation}
		\label{eqn:Tn_natural}
		\overline{T}_n^\natural := |T_n^\natural|_\infty = \max_{1\le k \le d} |T_{nk}^\natural|, 
		\end{equation} 
		which separates $H_0$ from $H_1$ when temporal dependence exists. 
		
		Let $e_1, \dots, e_{n-M+1}$ be i.i.d.\ $N(0,1)$ random variables that are independent of $X_1^n$. 
		Define the bootstrapped test statistic
		\begin{equation}
		\label{eqn:Tn_flat}
		T_n^\flat = {n}^{1/2} {n \choose 2}^{-1} \ \sum_{i=1}^{n-M-1} \left\{ \sum_{j=i+M+1}^{n} h(X_i,X_j) \right\} e_i
		\end{equation}
		and $\overline{T}_n^\flat :=  |T_n^\flat|_\infty = \max_{1 \le k \le d} |T_{nk}^\flat|$. We reject $H_0$ if $\overline{T}_n^\natural > q_{\overline{T}_n^\flat \mid X_{1}^{n}} (1-\alpha)$, 
		%    where 
		%    \begin{equation*}
		%    q_{\overline{T}_n^\flat \mid X_{1}^{n}} (1-\alpha) = \inf \left\{t \in \R: \Prob (\overline{T}_n^\flat \le t \mid X_1^n) \ge 1- \alpha \right\}
		%    \end{equation*}
		%    is 
		the $(1-\alpha)$ quantile of the conditional distribution of $\overline{T}_n^\flat$ given $X_1^n$.
		
		%    Consider a special case of $M$-dependent time series. The approximations in \eqref{eqn:Tn_natural_mean} are exact equations. 
		When $\xi_i$ is an independent noise sequence, we simply set $M=0$ so that $T_n^\natural$ and $T_n^\flat$ reduce to $T_n$ and $T_n^\sharp$, respectively. In Section~\ref{subsec:sim_extension_time_series}, we shall provide 
		some empirical performance of the trimmed bootstrap test for a vector autoregressive process $\xi_{i}$.
		\qed
	\end{rmk}
}

\section{Extensions to multiple change points scenario}
\label{sec:multiple_extension}

\subsection{Direct extension to multiple change points testing}

Recall  $X_{i} \sim F_{i}, i =1,\dots,n$  as a sequence of independent random vectors taking values in $\R^{p}$. Generally, suppose there are $\nu$ change points $m_0 = 0 < m_1 < \dots < m_\nu < m_{\nu+1} = n$ such that
\begin{equation*}
F_{m_k+1}(x) = \cdots = F_{m_{k+1}}(x) = F(x - \theta^{(k)}) \text{ and } F_{m_k} \neq F_{m_{k+1}} \text{ for } k = 0, \dots, \nu. 
\end{equation*} 
Without loss of generality, we can assume $\theta^{(0)} = 0$. Consider the alternative hypothesis with multiple change points
\begin{equation}
\label{eqn:change_point_mean_test_mcp}
H_1^{'} : \theta^{(k)} \neq \theta^{(k+1)}  \text{ for some } m_k, k=0,\dots,\nu \text{ and } \nu \ge 1.
\end{equation}
Denote $X_i = \xi_i + \theta^{(k)}$ and due to the shift-invariant property (B1) we have
\begin{equation*}
\delta^{(k,k')} = \E h(X_i,X_j) = \E h(\xi_i, \xi_j + (\theta^{(k')} - \theta^{(k)} ) ) \text{ for } m_k < i \le m_{k+1}, m_{k'} < j \le m_{k'+1}.
\end{equation*}
Let $s_i = m_{i+1}-m_{i}$ be the size of data segment that corresponds to the $i$-th location shift. Then,
\begin{equation}
\label{eqn:signal_strength_overall}
\E \left[ \sum_{1 \le i<j \le n} h(X_i,X_j) \right] = \sum_{0 \le k < k' \le \nu} s_{k} s_{k'} \delta^{(k,k')} =: \tilde{\Delta},
\end{equation}
where the standardized signal strength is $|E[T_n]|_\infty = n^{1/2} {n \choose 2}^{-1} |\tilde{\Delta}|_\infty$.
Under the multiple change points alternative, if signal cancellation does not exist, i.e.\ $|\tilde{\Delta}|_\infty$ is away from 0, then we can directly extend the theory as below.

\begin{lem}[Power of the bootstrap test under $H_1^{'}$]
	\label{lem:power_signal_rate_multiple}
	Suppose $H_1^{'}$ is true and (B1)-(B3) hold in addition to (A1)-(A3). Let $\zeta \in (0, e^{-1})$ such that $\log(1/\zeta) \le K \log(\nu^2 nd)$ for some constant $K>0$. Suppose $\nu$ is a constant. If 
	%	\vspace*{-5pt}
	\begin{equation}
	\label{eqn:signal_rate_multiple}
	|\tilde{\Delta}|_\infty  > K_0 \nu^2 D_n n^{3/2} \log^{1/2}(nd/\alpha) + C_1(\ub) n^{3/2} \log^{1/2}(\zeta^{-1}) \log^{1/2}(d) + \phi,
	\end{equation}
	where 
	\begin{align*}
	\phi = & K_0' \left\{	n^{3/4} \log^{3/4}(nd/\alpha) \max_{k<k'} (s_k s_{k'})^{1/4} |\delta^{(k,k')}|_\infty + \right. \\ &\qquad \qquad \left.   n^{1/2} \log^{1/2}(nd/\alpha) \sum_{ k < k'} (s_k s_{k'})^{1/2} |\delta^{(k,k')}|_\infty  \right\},
	\end{align*}
	then 
	$
	\Prob ( \overline{T}_n > q_{\overline{T}_n^\sharp \mid X_{1}^{n}} (1-\alpha) ) \ge 1- \zeta - C_2(\ub) \varpi_n
	$
	for some constants $K_0, K_0'$ and $C_1(\ub),  C_2(\ub)$.
\end{lem}

\begin{rmk}[Explanation on $\phi$ and connection to single change point case]
	\label{rmk:compare_rates_to_single}
	Compared to (\ref{eqn:signal_rate}) in Theorem~\ref{thm:power_signal_rate}, there is an additional term $\phi$ in (\ref{eqn:signal_rate_multiple}). 	It comes from controlling $\Cov(T_n^\sharp \mid X_{1}^{n}) $ under the alternative hypothesis.
	Consider the special case of single change point where $\nu=1$ in (\ref{eqn:change_point_mean_test_mcp}), we may assume $m = s_0 < s_1 = n-m$. Then $\phi \asymp (m^{1/4} n \log^{3/4} (nd)  + m^{1/2} n \log^{1/2} (nd) )  |\delta^{(0,1)}|_\infty \lesssim m(n-m) |\delta^{(0,1)}|_\infty = |\tilde{\Delta}|_\infty$ for $m \gtrsim \log^{5/2}(nd)$, i.e., $\phi$ is dominated by the l.h.s.\ of (\ref{eqn:signal_rate_multiple}). Then our result under $H_1^{'}$ reads the same as (\ref{eqn:signal_rate}).
	\qed
\end{rmk}

The l.h.s.\ of (\ref{eqn:signal_rate_multiple}) is the overall signal strength which does not directly depend on minimum separation of change points $\underline{m} = \min_{0\le k \le \nu} s_k$ or signal strength like $\bar{\delta} = \max_{0\le k < k' \le \nu} |\delta^{(k,k')}|_\infty$ or $\bar{\delta}^{'} = \min_{0\le k <  \nu} |\delta^{(k,k+1)}|_\infty$ that is usually assumed under CUSUM-based approach \cite{cho2016change,chofryzlewicz2015,yuchen2017finite}. Taking (\ref{eqn:cusum_mean_Rp}) for instance, our framework does not screen out any statistic by visiting each location $i = 1, \dots, n-1$. Therefore, we allow the product of $s_k s_{k'} \delta^{(k,k')}$ dominates the overall change $\tilde{\Delta}$ even if $s_k$ or $\delta^{(k,k')}$ is fairly small. However, it is inconvenient that signal cancellation in (\ref{eqn:signal_strength_overall}) cannot be characterized by $\underline{m}$ or $\bar{\delta}$. Another drawback is that $\tilde{\Delta} = 0$ can happen even if $\underline{m} \asymp O(n)$ and $\bar{\delta}$ is large.
This issue will be discussed in the next section.
Before that, we discuss two special cases derived from Lemma~\ref{lem:power_signal_rate_multiple} based on $\underline{m}$ and $\bar{\delta}$ to make the lemma more informative and instructional. Besides, we can avoid $|\delta^{(k,k')}|_\infty$ being on both sides of (\ref{eqn:signal_rate_multiple}). 
\begin{enumerate}
	\item Suppose $\bar{\delta}$ is upper bounded, for example $h$ is the bounded sign kernel. We have $s_k <n$, which leads to $\max_{0\le k < k' \le \nu} (s_k s_{k'})^{1/4} \le n^{1/2}$ and $\sum_{ k < k'}  (s_k s_{k'})^{1/2} \le \nu^2 n$. Since $n\gtrsim \log^7(nd)$, so $\phi \lesssim \nu^2 n^{3/2} \log^{1/2}(nd) \bar{\delta} $, which is nearly the same rate as the first part on the r.h.s.\ of (\ref{eqn:signal_rate_multiple}). Therefore, $\phi$ can be dropped. 
	\item Suppose $\{|\delta^{(k,k')}|_\infty: 0\le k < k' \le \nu\}$ are at the same magnitude and $|\tilde{\Delta}|_\infty$ is dominated by $s_k s_{k'} |\delta^{(k,k')}|_\infty \gtrsim \underline{m}^2 \bar{\delta}$ for some pair of $(k, k')$.
	Then a sufficient condition to control Type II error is to have $\underline{m}^2 \bar{\delta}$ greater than the upper bound of $\phi$, namely $n^{3/2} \log^{1/2}(nd) \bar{\delta}$. So we only need $\underline{m} \gtrsim n^{3/4} \log^{1/4}(nd)$. This is weaker than the condition in \cite[(B1)]{cho2016change} which requires $\underline{m} \gtrsim n^{6/7}$. One example of such assumption is the setup in \cite{jirak2015} where each dimension has at most one change.
\end{enumerate}
In summary, we have the following corollary.
\begin{cor}
	\label{cor:multiple_special_cases}
	Suppose the conditions in Lemma~\ref{lem:power_signal_rate_multiple} are satisfied. \\
	(i) If $\bar{\delta} = \max_{0\le k < k' \le \nu} |\delta^{(k,k')}|_\infty$  is bounded, then $\Prob ( \overline{T}_n > q_{\overline{T}_n^\sharp \mid X_{1}^{n}} (1-\alpha) ) \ge 1- \zeta - C_2(\ub) \varpi_n$ when
	\begin{align*}
	|\tilde{\Delta}|_\infty &= |\sum_{ k < k'} s_{k} s_{k'} \delta^{(k,k')}|_\infty \\
	&> K_0 \nu^2 D_n n^{3/2} \log^{1/2}(nd/\alpha) + C_1(\ub) n^{3/2} \log^{1/2}(\zeta^{-1}) \log^{1/2}(d).
	\end{align*}
	(ii) If all $|\delta^{(k,k')}|_\infty$ are at the same rate and $|\tilde{\Delta}|_\infty > K_1 \underline{m}^2 \bar{\delta}^{'}$, then $\phi$ in \eqref{eqn:signal_rate_multiple} can be dropped when 
	\begin{equation*}
	\underline{m} = \min_{0\le k \le \nu} s_k \ge K_2 n^{3/4} \log^{1/4}(nd/\alpha) .
	\end{equation*}
	Consequently, if signals are almost evenly spread (i.e. $\underline{m} \asymp n$) and $|\delta^{(k,k')}|_\infty$ is upper bounded, then  $\Prob ( \overline{T}_n > q_{\overline{T}_n^\sharp \mid X_{1}^{n}} (1-\alpha) ) \ge 1- \zeta - C_2(\ub) \varpi_n$ when
	\begin{equation*}
	|\sum_{ k < k'} \delta^{(k,k')}|_\infty > K_0 \nu^2 D_n n^{-1/2} \log^{1/2}(nd/\alpha) + C_1(\ub) n^{-1/2} \log^{1/2}(\zeta^{-1}) \log^{1/2}(d).
	\end{equation*}
\end{cor}
%	\begin{rmk}
In Remark~\ref{rmk:power_boundary}, we have shown that local alternative is detectable when $\underline{m} \gtrsim n^{1/2} \log^{1/2}(nd/\alpha)$. Corollary~\ref{cor:multiple_special_cases} (ii) has a stronger requirement due to extra cost from handling the possible cancellation in analyzing the general case of multiple change points. If there is only one change point, then the interpretation of rates in Lemma~\ref{lem:power_signal_rate_multiple} can be found in Remark~\ref{rmk:compare_rates_to_single}.
A real application for our global test lies in the special case of monotone signals that have order structures $\theta_{1} \le \cdots \le \theta_{\nu}$ \cite{Minami_2020}.
%		\qed
%	\end{rmk}

\subsection{Modification to block testing} 
\label{subsec:block_testing}
The direct extension of testing $H_0$ against $H_1^{'}$ depends on $|\tilde{\Delta}|_\infty$, which can be 0 even if each $|\delta^{(k,k')}|_\infty$ are fairly large. The global test will not help under severe signal cancellation. One solution is to localize the test such that the problem can convert to single change point scenario. 

Consider performing a block testing in the following way.
Divide the sample into $B$ blocks of size $L$ ($n=BL$ for brevity) where $L \le 2 \underline{m}$. Then each block contains at most 1 change point.  We can apply the original test to the block-vector data $Z_1, \dots, Z_L \in \R^{Bp}$, where $Z_i = \vec (X_{i}, \dots,  X_{bL+i}, \dots, X_{(B-1)L+i})$. Let $h^Z: \R^{Bp} \times \R^{Bp} \rightarrow \R^{Bd}$ be  the block version extension of $h$: 
$$h^Z (Z_i,Z_j) = ( h(X_{i}, X_{j})^\top, \dots, h(X_{(B-1)L+i}, X_{(B-1)L+j})^\top)^\top.$$
Note that there is no signal cancellation issue. Denote $m^Z_k = (m_k \bmod L)$. Modified theory of power will depend on signal strength as below.

\begin{cor}
	Suppose the conditions in Lemma~\ref{lem:power_signal_rate_multiple} hold. If 
	%	\vspace*{-5pt}
	{\small \begin{equation*}
		\max_{0\le k \le \nu} m^Z_k (L-m^Z_k ) |\delta^{(k,k')}|_\infty > K_0 \nu^2 D_n L^{3/2} \log^{1/2}({nd\over\alpha}) + C(\ub) L^{3/2} \log^{1/2}(\zeta^{-1}) \log^{1/2}(d),
		\end{equation*}}
	then 
	$
	\Prob \left( \overline{T}_n > q_{\overline{T}_n^\sharp \mid X_{1}^{n}} (1-\alpha) \right) \ge 1- \zeta - C_2(\ub) \varpi_n
	$
	for some constants $K_0$ and $C_1(\ub),  C_2(\ub)$.
	
	%		Specifically, under single change point alternative $H_1$, if there are constants $0<c_1<c_2<1/2$ such that $c_1 <( m \wedge (n-m)) /n <c_2$, then power goes to 1 when $|\theta_{h}|_\infty \ge C_3(\alpha,\ub) L^{-1/2} \log^{1/2}(nd)$, which is larger than the value in RHS of Theorem~\ref{thm:power_signal_rate} but is the same as its rate.
\end{cor}
Note that the rate now depends on $L$ rather than $n$ (except for logarithm factors). The block test sacrifices sample size to gain the single change-point structure. In practice, the block parameter $L$ (or equivalently $B$) needs to be selected carefully since power depends on the relevant locations of $\{m^Z_k\}_{k=0}^\nu$. One solution is to use $L = 2 n^{1/2} \log^{1/2} (nd)$ that is discussed in Remark~\ref{rmk:power_boundary} or $L = 2 n^{3/4} \log^{1/4} (nd)$ that is from Corollary~\ref{cor:multiple_special_cases} (ii).

\subsection{Discussion on binary segmentation}
To deal with multiple change points, binary segmentation (BS) is conceptually straightforward  \cite{cho2016change,chofryzlewicz2015,yuchen2017finite}. The main idea is to recursively estimate change points by screening sub-segments before and after each estimated location. 
However, such process starts from a ``global" detection that may miss change points under unfavorable configuration of signal cancellation.
To improve BS, \cite{fryzlewicz2014} proposed wild binary segmentation (WBS) that randomly draw intervals to localize searching for change points. Recently, it has been widely adopted \cite{wangsamworth2017,wang2019inference} owing to its flexibility and computational efficiency. However, we will not be able to apply BS or WBS based approaches directly because there is no estimator in our framework so far.

One solution is to incorporate an external estimator. For example, consider the $U$-statistics $T(s) = \sum_{i=1}^{s} \sum_{j=s+1}^{n} h(X_i,X_j), s=1,\dots, n-1 $ where $h$ is the anti-symmetric kernel used in (\ref{eqn:Tn_scaled}). It can be shown that for each segment $m_k \le s-1 <s \le m_{k+1}$
\begin{equation*}
\E T(s) - \E T(s-1) = \sum_{j=m_{k+1}+1}^{n} \E h(X_s,X_j) -  \sum_{i=1}^{m_k} \E h(X_i,X_s) = const. 
\end{equation*}
In other word, within each segment $(m_{k}, m_{k+1}]$, $\E T_l(s)$ is monotone ($l=1, \dots, p$). So $\max_{1 \le s \le n-1}|\E T(s)|_\infty$ is always attained at one change point. Therefore, the estimator 
$$\hat{m} = \argmax_{1 \le s \le n-1} |T(s)|_\infty$$
can play a role in BS type approach. Similar ideas are discussed in \cite{Pettitt1979,gombay1995application,gombay2001u, brault2018nonparametric} as applications using $U$-statistics for estimation of change points. 
Though it is fascinating to investigate the consistency of a BS algorithm that combines estimation using $\hat{m}$ and our bootstrapping test using $T_n$, the focus and main contribution of this paper is to perform a test without visiting each point. So we leave this algorithm as an open question for future analysis.

{
	Another solution is to adopt the randomization idea from WBS to conduct inference in the presence of multiple change points. One can independently sample $B_{W}$ intervals that are wider than a pre-specified length $n'$ and obtain a set of (scaled) test statistics on each interval. Denote the set as $\calT_W (X_{1}^{n})$. For a given level $\alpha$, we then perform the proposed bootstrap test on the interval whose corresponding (scaled) test statistic achieves the $(1-\alpha)$-th quantile of $\calT_W (X_{1}^{n})$. If the bootstrap test rejects $H_0$ under level $\alpha$, then it implies a change point in this interval, which in turn concludes $H_1^{'}$ of at least one change point.
	The WBS-type test is summarized in Algorithm~\ref{alg:WBS_Ustat}. 
	
	\begin{algorithm}[h]
		\caption{WBS-type testing ($\alpha, n'$) against multiple change points}\label{alg:WBS_Ustat}
		\begin{algorithmic}[1]    		
			\STATE{Draw $B_{W}$ random intervals $[s_b,e_b], b=1,\dots, B_{W}$, where start- and end-points are taken independently and uniformly from $\{1,\dots, n\}$ such that $e_b - s_b > n'$.}
			\STATE{Denote $\calT_W (X_{1}^{n}) = \{ \max_{1 \le k \le d} |T_{e_b - s_b}(X_{s_b}^{e_b})|_k, b=1,\dots, B_{W}\}$, where $$T_{e_b - s_b} (X_{s_b}^{e_b}) = {(e_b - s_b)}^{1/2}{e_b - s_b \choose 2}^{-1} \sum_{s_b \le i<j \le e_b} h(X_i, X_j) $$ is our $U$-statistic on each interval.}    		
			\STATE{Let $q_{\overline{T}_{W} \mid X_{1}^{n}} (1-\alpha)$ be the $(1-\alpha)$-th quantile of $\calT_W$ and $b'$ be the corresponding index.}
			\STATE{Perform our bootstrap test on $[s_{b'},e_{b'}]$.}
			\IF {our bootstrap test is significant at level $\alpha$}
			\STATE {reject $H_0$.}
			\ELSE
			\STATE {reject $H_1^{'}$.}
			\ENDIF
			%    		\RETURN{estimated change points.}    		
		\end{algorithmic}
	\end{algorithm}

	Note that the tuning parameter of $n'$ bounds the length of randomly selected intervals from below. If $n'$ is too small, for instance $n'=1$, then Step 4 is likely to end up with a very small interval $[e_{b'},s_{b'}]$. Since approximating $\Cov_{jj} (T_{e_b - s_b})$ on small intervals $\{[s_b,e_b]\}$ will not be consistent, it can lead to the failure of size control under $H_0$. In practice, one may select $n'$ by applying the Algorithm~\ref{alg:WBS_Ustat} on $\{\epsilon_i X_i\}_{i=1}^n$, where the multipliers $\epsilon_i, i=1,\dots,n$ are i.i.d.\ standard Gaussian random variables that are independent of $X_1^n$. Since $\E (\epsilon_iX_i \mid X_1^n) = 0$ and $\Cov (\epsilon_iX_i \mid X_1^n) = X_i X_i^T$, the transformed data $\{\epsilon_i X_i\}_{i=1}^n$ can mimic $H_0$ without any structural assumption. Simulation result for Algorithm~\ref{alg:WBS_Ustat} is presented in Section~\ref{subsec:app_additional_sim_results} in the Appendix.
}

\subsection{Backward detection approach for change points estimation}
As shown in aforementioned forward searching solutions, the drawbacks of BS include cancellation of signals and requirement of change point estimators. 
Instead of repeatedly splitting intervals after each detection of change point, we can reversely merge consecutive segments in a backward detection way \cite[Section 3.2.2]{niu2016multiple}. Then, our test can work as a stopping rule. 

Precisely, denote the initial partition of data segments as $b_0^{(0)} = 0 < b_1^{(0)} < b_2^{(0)} < \cdots < b^{(0)}_{\nu_0-1} < n = b_{\nu_0}^{(0)}$ and the corresponding data blocks as $\calB^{(0)} = \{ B_1^{(0)}, B_2^{(0)}, \dots, B_{\nu_0}^{(0)} \}$, where $B_i^{(0)} = \{ X_{b_{i-1}^{(0)}+1}, \dots, X_{b_{i}^{(0)}}\}$. 
For each pair of consecutive blocks $\{ B_i^{(0)}, B_{i+1}^{(0)} \}, i = 1,\dots,\nu_k-1$, we can compute a \textit{Dissimilarity Index} based on $T_n$ using truncated data sequence, i.e.\ 
\begin{align}
\label{eqn:dissimilarity_index}
DI_i  &= |T_n( B_i^{(0)} \cup B_{i+1}^{(0)})|_\infty \nonumber \\
&= \max_{1 \le k \le d} \left|(b_{i+1}^{(0)} - b_{i-1}^{(0)})^{1/2} {b_{i+1}^{(0)} - b_{i-1}^{(0)} \choose 2}^{-1} \quad \sum_{\mathclap {b_{i-1}^{(0)}+1 \le i < j \le b_{i+1}^{(0)}} } \quad h_k (X_i,X_j)\right|.
\end{align}
Since each component of $T_n$ is the standardized Hodges-Lehmann type estimator of location shift in each dimension, large $DI_i$ indicates strong dissimilarity between $B_i^{(0)}$ and $B_{i+1}^{(0)}$. Therefore, we can pick the pair of data blocks with the smallest $DI$ and perform our bootstrapped test to decide whether to merge them. If the test fails to reject the null hypothesis of no change point, we merge the two  blocks into one. Otherwise, we move on to test the next pair of data blocks with the second smallest $DI$. The process will continue until no blocks can be merged.	
The Backward Detection (BD) algorithm is summarized in Algorithm~\ref{alg:multi_bw}. 

\begin{algorithm}[hpt]
	\caption{Backward Detection: BD($\calB^{(k)}$)}
	\label{alg:multi_bw}
	%		\hspace*{\algorithmicindent} \textbf{Input:} 
	\begin{algorithmic}[1]
		\STATE {Start from data blocks as $\calB^{(k)} = \{ B_1^{(k)}, B_2^{(k)}, \cdots, B_{\nu_k}^{(k)} \}$}
		\STATE {Compute the Dissimilarity Index $DI_i = T_n(B_i^{(k)}, B_{i+1}^{(k)})$ as in (\ref{eqn:dissimilarity_index}) for $i = 1,\dots,\nu_k-1$}
		\STATE {Let $i^* = \argmin DI_i$.}
		\IF {our bootstrap test rejects the null for the segment $[b_{i^*-1}^{(k)}, b_{i^*+1}^{(k)}]$}
		\STATE {Repeat the test for $i^*$ referring to the next smallest $DI_i$ until all pairs are examined}
		\ELSE
		\STATE {Update $B_i^{(k+1)} = B_i^{(k)}$ for $i < i^*$}
		\STATE {Merge $B_{i^*}^{(k)}, B_{i^*+1}^{(k)}$ into one block $B_{i^*}^{(k+1)} = B_{i^*}^{(k)} \cup B_{i^*+1}^{(k)}$ }
		\STATE {Set $B_i^{(k+1)} = B_{i+1}^{(k)}$ for $i > i^*$}
		\STATE {Perform BD($\calB^{(k+1)}$)}
		\ENDIF
		\RETURN{Estimated blocks $\calB$ and corresponding segmentation $ \hat{m}_1 , \dots , \hat{m}_{\hat\nu}$ }
		% \EndFunction
		% \EndProcedure
	\end{algorithmic}
	
\end{algorithm}

Compared to forward detection, BD is able to detect short sequence. Hence, the Backward Detection algorithm will be more powerful compared to the direct extension or the block testing at the beginning of this section. There is no signal cancellation issue for BD. Besides, it can identify change points without introducing new estimators or statistics.
However, there is a risk of Type I error inflation since BD recursively performs testing procedure. Let $b_i^{(0)} = iM, i=1,\dots, \lfloor n/M \rfloor$, where $\lfloor n/M \rfloor$ is the largest integer not exceeding $n/M$. Then small $M$ can cause over rejection, while large $M$ may affect estimation accuracy and bring signal cancellation issue back. We should tune the initial partition size $M$ carefully. To the best of our knowledge, there is no theoretical result on the consistency of backward detection in change point estimation. For testing purpose, we can take $M$ as discussed in Section \ref{subsec:block_testing}. Empirical performance are investigated in simulation and real data application.

%\vspace*{-20pt} 
\section{Simulation study}
\label{sec:simulation}
In this section, we first report simulation results of our method in size approximation and power performance under single change point model. Independent random vectors are generated according to the location-shift model~\eqref{eqn:location_shift_model}. Comparison with other methods follows. In the end, we evaluate the global test of direct extension and the Backward Detection of estimation for multiple change points.

%\vspace*{-10pt} 
\subsection{Simulation setup}
We generate i.i.d. $\xi_i$ from the following distributions. 
\begin{enumerate}[leftmargin=1cm,itemindent=.5cm,labelwidth=\itemindent,labelsep=0cm,align=left]
	\item Multivariate Gaussian distribution: $\xi_i \sim  N(0, V)$.
	\item Multivariate elliptical $t$-distribution with degree of freedom $\nu$ ($\nu>2$): $\xi_i \sim {t_\nu}(V)$ with the probability density function \cite[Chapter 1]{muirhead1982}
	%\vspace*{-8pt}
	\begin{equation*}
	f(x; \nu, V) = {\Gamma(\nu+p)/2 \over \Gamma(\nu/2) (\nu \pi)^{p/2} \det(V)^{1/2}} \left( 1 + {x^\top V^{-1} x \over \nu} \right)^{-(\nu+p)/2}.
	\end{equation*}
	The covariance matrix of $\xi_i$ is $\Sigma = {\nu \over \nu-2} V$. In our simulation, we use $\nu=6$.
	
	\item Contaminated Gaussian distribution (i.e., Gaussian mixture model): $\xi_i \sim \text{ctm-G}(\varepsilon, \nu, V) = (1-\varepsilon) N(0,V) + \varepsilon N(0,\nu^2 V)$ with the probability density function
	%\vspace*{-8pt}
	\begin{align*}
	f(x; \varepsilon, \nu, V) ={1-\varepsilon \over (2\pi)^{p/2} \det(V)^{1/2}} &\exp\left(-{x^\top V^{-1} x \over2} \right)\\ 
	&+ {\varepsilon \over (2 \pi \nu^2)^{p/2} \det(V)^{1/2}} \exp\left(-{x^\top V^{-1} x \over2 \nu^2} \right).
	\end{align*}
	The covariance matrix of $\xi_i$ is $\Sigma = [ (1-\varepsilon) + \varepsilon \nu^2 ] V$. We set $\varepsilon = 0.2$ and $\nu = 2$.
	
	\item Scale transformation of Cauchy distribution:  $\xi_i = V^{1/2} \eta_{i}$, where $\eta_{i} = (\eta_{i1},\dots,\eta_{ip})^{T}$ and $\eta_{ij}$ are i.i.d.\ standard (univariate) Cauchy distribution. 
\end{enumerate}
For each distribution, we consider three spatial dependence structures of $V$. 
\begin{enumerate}[leftmargin=1cm,itemindent=.4cm,labelwidth=\itemindent,labelsep=0.3cm,align=parleft, label=(\Roman*)]
	\item Independent: $V=\Id_p$, where $\Id_p$ is the $p \times p$ identity matrix.
	\item Strongly dependent: $V=0.8 J + 0.2 \Id_p$, where $J$ is the $p \times p$ matrix of all ones.% (compound mixing)
	\item Moderately dependent: $V_{ij} = 0.8^{|i-j|},\ i,j=1, \dots, p$. % (autoregressive)
\end{enumerate}
Unless explicitly indicated, $B=200$ bootstrap samples are drawn for each testing procedure and all results are averaged on 500 simulations. We fix the sample size $n=500$ and dimension $p=600$ for single change point scenario and focus on the performance of two kernels: the linear kernel $h(x,y) = x-y$ and the sign kernel $h(x,y) = \sign(x-y)$.

%\vspace*{-10pt}
\subsection{Size approximation}

Let $\hat{R}(\alpha)$ be the proportion of empirically rejected null hypothesis at significance level $\alpha \in (0, 1)$. There are several observations we can draw from Table~\ref{tab:size_1}, which shows the empirical uniform error-in-size, $\sup_{\alpha \in (0,1)} |\hat{R}(\alpha) - \alpha|$.
% which reflects the Kolmogorov distance of $\rho(\overline{T}_n, T_n^\sharp | X_1^n)$. Small value of $\sup_{\alpha \in [0,1]} |\hat{R}(\alpha) - \alpha|$ indicates close distance between distributions of $\overline{T}_n$  (true  but unknown distribution in practice) and  $T_n^\sharp | X_1^n$ (the bootstrapped analogue that is available from data).
First, the dependence structure of $V$ does not  influence the errors remarkably. 
Second, for Gaussian, $t_6$ and contaminated Gaussian (ctm-G) distributions, the two kernels have very similar errors in size. For the Cauchy distribution which is only applicable for the sign kernel, error-in-size is comparable with the other three distribution settings.
Therefore, we conclude that under $H_0$, the sign kernel gains robustness without losing much accuracy. % though it is computational expensive than the linear kernel.
Three example curves are displayed additional in Figure~\ref{fig:size_error_Eg} to visualize the size approximation.

%\vspace*{-5pt}

\begin{table}[htb]
	\vskip .2cm
	\setlength\tabcolsep{2.5pt}
	\centering
	\caption{Uniform error-in-size under $H_0$.}
	\begin{tabular}{cl|ccc|cccc}
		\hline
		\multicolumn{2}{c|}{\multirow{2}{*}{$\sup_{\alpha \in (0,1)} |\hat{R}(\alpha) - \alpha|$}} & \multicolumn{3}{c|}{linear kernel} & \multicolumn{4}{c}{sign kernel}         \\ \cline{3-9} 
		\multicolumn{2}{c|}{}                                                                      & Gaussian  & $t_6$  & ctm-G & Gaussian & $t_6$ & ctm-G & Cauchy \\ \hline
		I                                  & $V=\Id_p$                                             & 0.034     & 0.086  & 0.040        & 0.026    & 0.066 & 0.032        & 0.028  \\
		II                                 & $V=0.8 J + 0.2 \Id_p$                                 & 0.054     & 0.020  & 0.058        & 0.064    & 0.040 & 0.050        & 0.060  \\
		III                                & $V_{ij} = 0.8^{|i-j|}$                                & 0.026     & 0.048  & 0.040        & 0.040    & 0.036 & 0.060        & 0.058  \\ \hline
	\end{tabular}
	\label{tab:size_1}
\end{table}

%\vspace*{-5pt}
\begin{figure}[htb] 
	\centering
	\includegraphics[height=3.95cm]{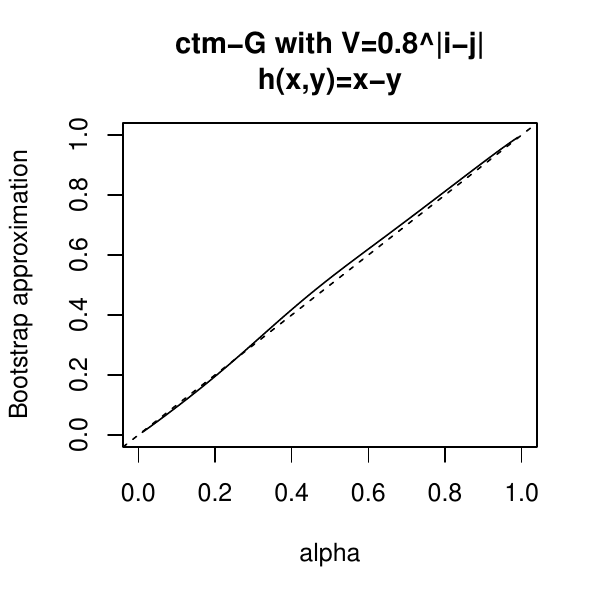}
	\includegraphics[height=3.95cm]{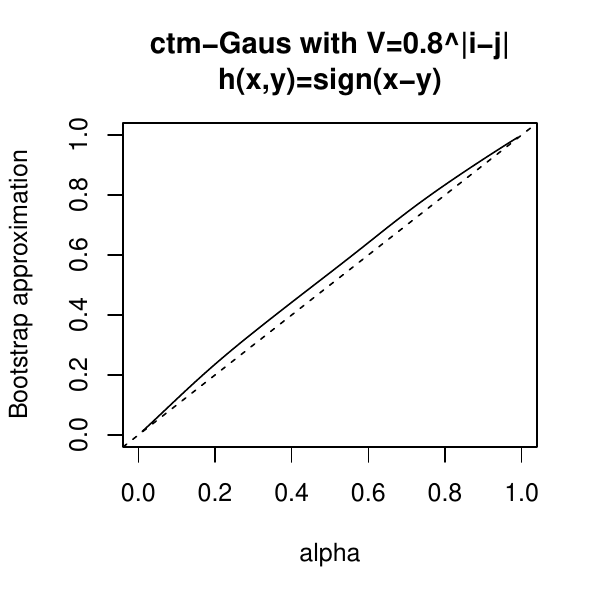}
	\includegraphics[height=3.95cm]{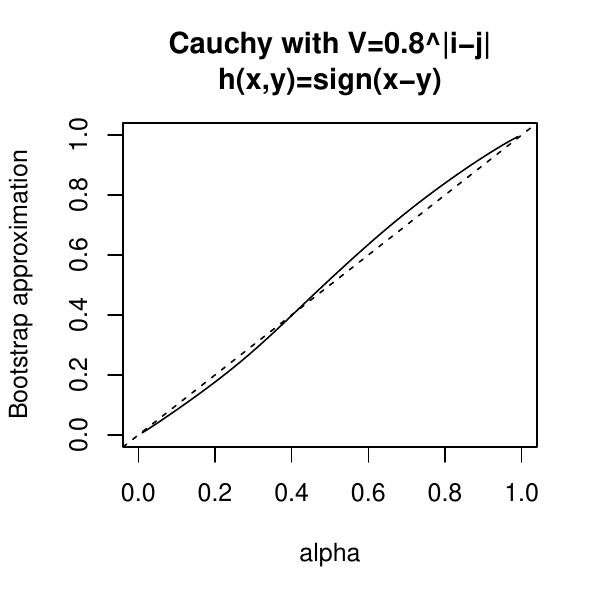}
	\vspace*{-10pt}
	\caption{Selected setups for comparing $\hat{R}(\alpha)$ along with $\alpha$. See headlines for corresponding distribution and kernel.
		%		Left and middle: linear kernel and sign kernel with contaminated-Gaussian distribution; Right: sign kernel with Cauchy distribution.
	}
	\label{fig:size_error_Eg} 
\end{figure}

%\vspace*{-5pt}

We also compare our test using the linear kernel to the CUSUM counterpart in \cite[\texttt{BABS}]{yuchen2017finite} under the same setting with the boundary removal parameter as $\us = 40$. %The proposed test statistic can be viewed as a (computationally efficient) one-pass version of the CUSUM and demands less computational costs. 
Table~\ref{tab:size_0.1} displays corresponding simulation results. 
By comparing it to Table~\ref{tab:size_1}, we observe that the CUSUM approach suffers from greater size distortion as it has larger uniform errors in general.
When we focus on the maximum error within the interval $\alpha \in (0,0.1]$ (that are common choices in real applications), our linear kernel based algorithm still outperforms. In addition, our test demands no more computational costs and it enjoys flexibility of no tuning parameter. %, while the boundary removal parameter $\us$ for the CUSUM needs to be selected carefully in practice. 

%\vspace*{-10pt}
\begin{table}[h]
	\vskip .2cm
	\setlength\tabcolsep{2.5pt}
	\centering
	\caption{Error-in-size $\sup_{\alpha} |\hat{R}(\alpha) - \alpha|$ for $\alpha \in (0,1)$ and $\alpha \in (0,0.1]$ }
	\begin{tabular}{c|ccc||ccc|ccc}
		\hline
		\multirow{3}{*}{} & \multicolumn{3}{c||}{$\sup_{\alpha \in (0,1)} |\hat{R}(\alpha) - \alpha|$} & \multicolumn{6}{c}{$\sup_{\alpha \in (0,0.1]} |\hat{R}(\alpha) - \alpha|$} \\ \cline{2-10} 
		& \multicolumn{3}{c||}{CUSUM approach}                                       & \multicolumn{3}{c|}{CUSUM approach}   & \multicolumn{3}{c}{linear kernel}   \\ 
		& Gaussian                  & $t_6$                 & ctm-G                 & Gaussian     & $t_6$     & ctm-G     & Gaussian     & $t_6$     & ctm-G     \\ \hline
		I                                                    & 0.072          & 0.122 & 0.096                                                  & 0.040          & 0.036 & 0.064        & 0.012         & 0.010 & 0.020        \\
		II                                                   & 0.066          & 0.044 & 0.048                                                  & 0.026          & 0.014 & 0.024        & 0.008         & 0.014 & 0.012        \\
		III                                                  & 0.074          & 0.092 & 0.066                                                  & 0.022          & 0.038 & 0.048        & 0.020         & 0.018 & 0.012          \\ \hline
	\end{tabular}
	\label{tab:size_0.1}
	\vspace*{-10pt}
\end{table}

\subsection{Power of the bootstrap test}
Under $H_{1}$, the signal vector is chosen as $\theta = (\theta_{1},0,\dots,0)^{T}$ such that $\theta_1 = |\theta|_\infty$.  We vary the change point location $m = 50, 150, 250$.
Figure~\ref{fig: PowerEg} displays the power curves for different kernels, change point location $m$ and dependence structure $V$. 
The left panel investigates kernel and location impact. Change point at center $m=n/2 = 250$ (solid curves) is easier to detect than that of $m=n/10=50$ at boundary (dashed curves) regardless of the choice of kernel. For standard Gaussian distribution, the linear kernel has greater power than the sign kernel when the change occurs at boundary point $m=50$, but the relation reverses when $m=250$. 
%However, linear kernel (triangle symbol) wins when the change happens at boundary point $m=n/10=50$ in this case where $\xi_{ij}$ are i.i.d.\ Gaussian. This phenomenon indicates that when it is applicable, the linear kernel is more sensitive to the signal size $|\theta_h|_\infty=|\theta|_\infty$ if $m$ is close to ends. But sign kernel slightly outperforms in the center shift case. 
The middle panel uses linear kernel as an example to illustrate the observation that the dependence structure $V$ does not significantly influence the power, though our $\ell^{\infty}$-type test statistic has advantage in the strong dependence case. 
The right panel displays the power of the sign kernel for Cauchy distributed data to highlight its robustness to location parameter $\theta$ and the impact from change point position $m$.
%In conclusion, both kernels work well empirically, and linear kernel stands out computationally while sign kernel is prominent in terms of robustness for heavy-tailed data.
Regarding to the exact power values, see Table~\ref{tab:power_linear} (linear kernel) and \ref{tab:power_sign} (sign kernel) in Appendix.

\begin{figure}[h] 
	\centering
	\includegraphics[trim=0 0 250 0,clip, height=4cm]{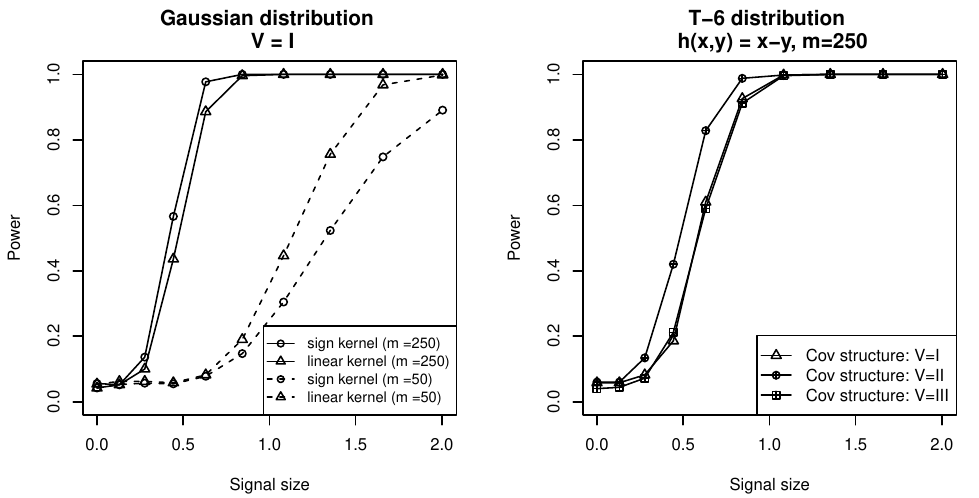} 
	\includegraphics[trim=480 0 0 0,clip, height=4cm]{power_eg.pdf} 
	\caption{Selected setups for comparing power curves. See headlines and legends for corresponding distribution, kernel, covariance structures and change point location $m$.
		%		Left: comparison between kernels (linear and sign) and between different change point locations ($m = 50, 150$); Middle: influences from covariance structures for linear kernel example; Right: robustness of sign kernel and impact from locations ($m = 50, 150, 250$).  
	}
	\label{fig: PowerEg} 
\end{figure}

\subsection{Comparison with other methods}

We compare our $U$-statistic approach to other competing algorithms in change point literature. 
The linear and sign kernels of our approach are used.  All of the four competitors, namely \cite[\texttt{BABS}]{yuchen2017finite}, \cite[\texttt{Jirak}]{jirak2015}, \cite[\texttt{SBS}]{chofryzlewicz2015} and \cite[\texttt{Inspect}]{wangsamworth2017}, are based on CUSUM statistics. Among them, \texttt{BABS} and \texttt{Jirak} are $\ell^\infty$-type bootstrap test for single change point using different weights on $(s(n-s)/s)$ in (\ref{eqn:cusum_mean_Rp}), the latter of which needs cross-sectional variance estimation on each dimension and it is sensitive to mean shift near the center of data sequence.  The last two competitors target on multiple change point estimation where \texttt{SBS} is thresholded $\ell^1$-type estimator and \texttt{Inspect} is projection based. We adopt their  single change point version function in corresponding R packages and convert them to tests using their default threshold computing functions. 
In our simulation, we set ${n=500, p=600}, \alpha =0.05, m=150$, and set boundary removal as 40 for \texttt{BABS}, \texttt{Jirak} and \texttt{SBS}. 

Table~\ref{tab:comparison_power} compares the power of different tests when the signal $\theta_1$ is growing. It is clear that \texttt{SBS} and \texttt{Inspect} are not suitable in our setting since the location shift parameter is extremely sparse. When the data generating mechanism is not standard multivariate Gaussian (i.e.\ not Gaussian-I in the table), these two algorithms trigger excessive false alarms when $\theta = 0$ and do not return monotone powers as $\theta$ increase. The other two competitors \texttt{BABS} and \texttt{Jirak} behave similarly and return slightly higher powers than ours in general. Note that these two approaches need to pick boundary removal parameter, which can harm powers if it is too large to include true $m$ in the working interval. The contrasts between linear and sign kernel have been discussed in the previous part. 
Therefore, Table~\ref{tab:comparison_power} indicates that our method, which enjoys tuning-free and intermediate-estimation-free properties, is competent in empirical studies.

\begin{table}[!htp]
	\vskip .2cm
	\setlength\tabcolsep{2.5pt}
	\caption{Powers for our method using linear and sign kernels,  \cite[\texttt{BABS}]{yuchen2017finite}, \cite[\texttt{Jirak}]{jirak2015}, \cite[\texttt{SBS}]{chofryzlewicz2015} and \cite[\texttt{Inspect}]{wangsamworth2017}.}
	\label{tab:comparison_power}
	\centering
	\begin{tabular}{c|cccccc|cccccc}
		\hline
		\multirow{2}{*}{$|\theta|_\infty$} & \multicolumn{6}{c|}{Gaussian-I}                 & \multicolumn{6}{c}{Gaussian-II}                \\\cline{2-13}
		& linear & sign  & \texttt{BABS} & \texttt{Jirak} & \texttt{SBS}   & \texttt{Inspect} & linear & sign  & \texttt{BABS} & \texttt{Jirak} & \texttt{SBS}   & \texttt{Inspect} \\\hline
		0                                  & 0.030  & 0.049 & 0.042 & 0.061 & 0.764 & 0.020   & 0.042  & 0.037 & 0.056 & 0.052 & 0.092 & 0.833   \\
		%		0.13                               & 0.044  & 0.057 & 0.043 & 0.053 & 0.768 & 0.021   & 0.046  & 0.039 & 0.060 & 0.064 & 0.096 & 0.828   \\
		0.28                               & 0.088  & 0.070 & 0.087 & 0.110 & 0.836 & 0.021   & 0.216  & 0.154 & 0.209 & 0.232 & 0.264 & 0.724   \\
		0.44                               & 0.414  & 0.342 & 0.502 & 0.553 & 0.928 & 0.006   & 0.738  & 0.619 & 0.756 & 0.828 & 0.744 & 0.458   \\
		0.63                               & 0.890  & 0.830 & 0.966 & 0.967 & 0.976 & 0.001   & 0.996  & 0.982 & 0.996 & 0.999 & 0.926 & 0.287   \\
		0.84                               & 0.998  & 0.992 & 1     & 1     & 0.966 & 0.003   & 1      & 1     & 1     & 1     & 0.906 & 0.205   \\
		1.08                               & 1      & 1     & 1     & 1     & 0.972 & 0.093   & 1      & 1     & 1     & 1     & 0.898 & 0.183   \\
		1.35                               & 1      & 1     & 1     & 1     & 0.954 & 0.789   & 1      & 1     & 1     & 1     & 0.858 & 0.287   \\
		1.66                               & 1      & 1     & 1     & 1     & 0.938 & 0.999   & 1      & 1     & 1     & 1     & 0.838 & 0.997   \\
		2.00                               & 1      & 1     & 1     & 1     & 0.936 & 1       & 1      & 1     & 1     & 1     & 0.834 & 1       \\\hline\hline
		\multirow{2}{*}{$|\theta|_\infty$}                  & \multicolumn{6}{c|}{ctm-Gaussian-I}             & \multicolumn{6}{c}{$t_6$-II}                   \\\cline{2-13}
		& linear & sign  & \texttt{BABS} & \texttt{Jirak} & \texttt{SBS}   & \texttt{Inspect} & linear & sign  & \texttt{BABS} & \texttt{Jirak} & \texttt{SBS}   & \texttt{Inspect}\\ \hline
		0                                  & 0.030  & 0.051 & 0.020 & 0.067 & 0.592 & 1       & 0.060  & 0.068 & 0.044 & 0.053 & 0.060 & 0.975   \\
		%		0.13                               & 0.032  & 0.047 & 0.021 & 0.072 & 0.598 & 1       & 0.060  & 0.066 & 0.047 & 0.055 & 0.070 & 0.976   \\
		0.28                               & 0.036  & 0.073 & 0.033 & 0.076 & 0.630 & 1       & 0.124  & 0.148 & 0.109 & 0.132 & 0.108 & 0.942   \\
		0.44                               & 0.150  & 0.189 & 0.186 & 0.245 & 0.752 & 1       & 0.418  & 0.451 & 0.477 & 0.537 & 0.418 & 0.791   \\
		0.63                               & 0.524  & 0.593 & 0.675 & 0.750 & 0.904 & 1       & 0.878  & 0.912 & 0.919 & 0.936 & 0.856 & 0.629   \\
		0.84                               & 0.940  & 0.941 & 0.977 & 0.987 & 0.954 & 1       & 0.998  & 1     & 0.997 & 1     & 0.928 & 0.507   \\
		1.08                               & 1      & 1     & 0.999 & 1     & 0.946 & 1       & 1      & 1     & 1     & 1     & 0.898 & 0.453   \\
		1.35                               & 1      & 1     & 1     & 1     & 0.938 & 1       & 1      & 1     & 1     & 1     & 0.878 & 0.609   \\
		1.66                               & 1      & 1     & 1     & 1     & 0.918 & 1       & 1      & 1     & 1     & 1     & 0.846 & 1       \\
		2.00                               & 1      & 1     & 1     & 1     & 0.902 & 1       & 1      & 1     & 1     & 1     & 0.864 & 1      \\\hline
	\end{tabular}
\end{table}

For fair comparison, we do not use Cauchy distribution, since all methods, except for our sign kernel method, will fail when there is no well-defined mean parameter in the heavy tailed distribution.
Unreported results show that \texttt{SBS} and \texttt{Inspect} perform better when the mean change is denser.
We also remark that the Double CUSUM Binary Segmentation \cite[\texttt{DCBS}]{cho2016change} cannot detect any change point under our setting when $|\theta|_\infty \le 2$ because the setup is an extremely sparse case, so the table does not include it. 

{Section~\ref{subsec:app_additional_comparisons} in Appendix presents some further comparison for the size control of \texttt{BABS}, \texttt{Jirak} and our linear kernel approach under $H_0$ with fixed $p$ and boundary removal fraction while varying the sample size $n$.}

\subsection{Multiple change-point detection}

In the multiple change-point scenario, we first let the $k$-th component of $\theta^{(k)}$ to have the same location shift, i.e. $\theta_{1}^{(1)} = \theta_{2}^{(2)} = \dots = \theta_{n,\nu}^{(\nu)} = \delta \ne 0$.
Since change point estimation can be viewed as a special case of clustering, the accuracy can be measured by the adjusted Rand index (ARI) \cite{rand1971objective,hubert1985comparing}. We also report average ARI over all 500 runs. The bootstrap resampling is $200$.

To start with, we consider the direct application of our test using Gaussian distribution and linear kernel as a representative. Let $n = 1000, p=1200$, $ \alpha = 0.05$, and the two change points $(m_1, m_2) = (300, 600)$. The powers are shown in Table \ref{tab:multiple_power}. Our test works well as there is no signal cancellation. 

\begin{table}[hb]
	\centering
	\caption{Powers under multiple change point scenario using linear kernel. Here,  $(m_1, m_2) = (300, 600)$.}
	\label{tab:multiple_power}
	\begin{tabular}{rc|ccccc}
		\hline
		& $\delta$   & 0     & 0.317 & 0.733 & 1.282 & 2.004 \\ \hline
		\multirow{3}{*}{\begin{tabular}[c]{@{}c@{}}Spacial \\ dependent \\ structures\end{tabular}} & I   & 0.052 & 0.278 & 1     & 1     & 1     \\
		& II  & 0.064 & 0.510  & 1     & 1     & 1     \\
		& III & 0.070  & 0.222 & 0.996 & 1     & 1    \\ \hline
	\end{tabular}
\end{table}

Next, we apply the Backward Detection algorithm to estimate change points. We set the initial data blocks as segments of every $M=100$ data points and take the Gaussian distribution with moderate dependence structure (III) for instance. The estimated change points are summarized in Table~\ref{tab:linear_N6} (counts and ARIs) and Figure~\ref{fig:multiple_BD_linear_N6} (estimates). When signal $\delta = 0.317$ is small, BD fails to reject $H_0$ in about half of the time (276 out of 500) and it cannot locate the shifts accurately (small ARIs). However, as signal gets larger, both the number and the locations of change points can be detected consistently (under proper setup of initial data blocks). Meanwhile, ARIs are also increasing to 1, which stands for the perfect estimation. We further add one more change where $(m_1, m_2, m_3) = (300, 600, 800)$. The results in Table~\ref{tab:linear_N6} and Figure~\ref{fig:multiple_BD_linear_N6} are similar to that of two change point case. 

\begin{table}[htb]
	\vskip .2cm
	\setlength\tabcolsep{2.5pt}
	\centering
	\caption{Estimation of multiple change points for $M=100$: counts and ARIs. Here, the data is  Gaussian distributed with dependence structure (III) and the \textit{linear} kernel is used.}
	\label{tab:linear_N6}
	\begin{tabular}{cc|ccccc|ccccc}
		\hline
		&   & \multicolumn{5}{c|}{$(m_1, m_2) = (300, 600)$}                                                   & \multicolumn{5}{c}{$(m_1, m_2, m_3) = (300, 600, 800)$}                                                  \\ \hline
		\multicolumn{2}{c|}{$\delta$}                                                                           & 0                         & 0.317                     & 0.733                     & 1.282                     & 2.004        & 0                         & 0.317                     & 0.733                     & 1.282                     & 2.004        \\ \hline
		\multirow{6}{*}{\begin{tabular}[c]{@{}c@{}}Estimated\\ number\\ of\\ change\\ points\end{tabular}} & 0 & \textbf{497} & 276         & 0            & 0            & 0            & \textbf{494} & 270        & 0            & 0            & 0            \\
		& 1 & 3            & 209         & 0            & 0            & 0            & 6            & 217        & 0            & 0            & 0            \\
		& 2 & 0            & \textbf{15} & \textbf{484} & \textbf{492} & \textbf{483} & 0            & 13         & 32           & 0            & 0            \\
		& 3 & 0            & 0           & 16           & 7            & 17           & 0            & \textbf{0} & \textbf{455} & \textbf{474} & \textbf{483} \\
		& 4 & 0            & 0           & 0            & 1            & 0            & 0            & 0          & 13           & 25           & 17           \\
		& 5 & 0            & 0           & 0            & 0            & 0            & 0            & 0          & 0            & 1            & 0            \\ \hline
		\multicolumn{2}{c|}{Sum}                                                                                & 500          & 500         & 500          & 500          & 500          & 500          & 500        & 500          & 500          & 500          \\ \hline
		\multicolumn{2}{c|}{ARI}                                                                                & 0.994        & 0.195       & 0.933        & 0.998        & 0.996        & 0.988        & 0.152      & 0.920        & 0.995        & 0.997       \\\hline
	\end{tabular}
\end{table}

\begin{figure}[htb]
	\centering
	\includegraphics[trim=190 0 0 0,clip,width = 0.7\textwidth]{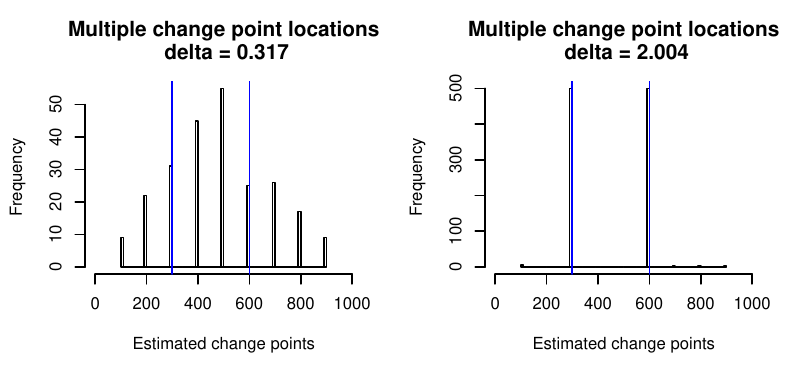}
	\includegraphics[trim=190 0 0 0,clip,width = 0.7\textwidth]{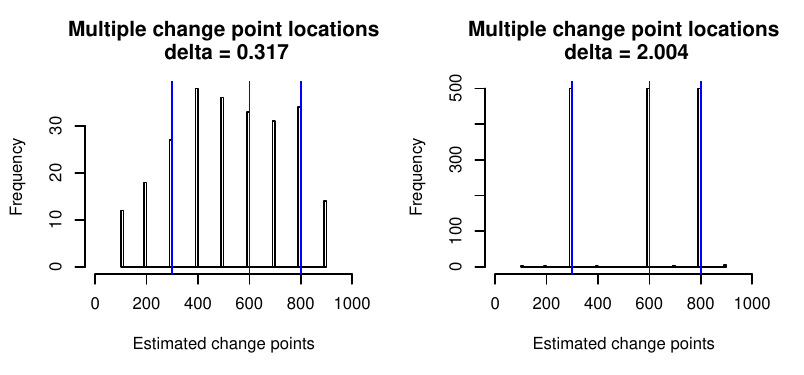}
	\caption{Multiple change point setup using \textit{linear} kernel at signal level $\delta=0.822, 10.023$. Upper: 2 change points $(m_1, m_2) = (300, 600)$. Lower:  3 change points $(m_1, m_2, m_3) = (300, 600, 800)$.}
	\label{fig:multiple_BD_linear_N6}
\end{figure}

Then, we also use the sign kernel to detect location shift for Cauchy distribution with dependence structure (III). Analogously, initial data blocks are segments of every $M=100$ data points in sequence. The cases of 2 change points $(m_1, m_2) = (300, 600)$ and 3 change points $(m_1, m_2, m_3) = (300, 600, 800)$ are implemented and the results are shown in Table~\ref{tab:sign_C6} and Figure~\ref{fig:multiple_BD_sign_C6}. Similar conclusion can be drawn except that stronger signal strength is required as Cauchy distribution has extremely heavy tails.

\begin{table}[htb]
	\vskip .2cm
	\setlength\tabcolsep{2.5pt}
	\centering
	\caption{Estimation of multiple change points for $M=100$. Here, the data is Cauchy distributed with dependence structure (III) and the \textit{sign} kernel is used.}
	\label{tab:sign_C6}
	\begin{tabular}{cc|ccccc|ccccc}
		\hline
		&   & \multicolumn{5}{c|}{$(m_1, m_2) = (300, 600)$}                                                   & \multicolumn{5}{c}{$(m_1, m_2, m_3) = (300, 600, 800)$}                                                  \\ \hline
		\multicolumn{2}{c|}{$\delta$}                                                                           & 0                         & 0.822                     & 2.320                     & 5.050                     & 10.023        & 0                         & 0.822                     & 2.320                     & 5.050                     & 10.023        \\ \hline
		\multirow{6}{*}{\begin{tabular}[c]{@{}c@{}}Estimated\\ number\\ of\\ change\\ points\end{tabular}} & 0 & \textbf{465} & 44         & 0            & 0            & 0            & \textbf{460} & 36        & 0            & 0            & 0            \\
		& 1 & 6            & 257         & 0            & 0            & 0            & 11            & 221        & 0            & 0            & 0            \\
		& 2 & 6            & \textbf{173} & \textbf{365} & \textbf{470} & \textbf{470} & 4            & 172         & 0           & 0            & 0            \\
		& 3 & 6            & 9           & 18           & 12            & 10           & 3            & \textbf{50} & \textbf{401} & \textbf{470} & \textbf{477} \\
		& 4 & 5            & 11           & 21            & 15            & 12            & 8            & 9          & 19           & 11           & 8           \\
		& 5 & 6            & 1           & 59            & 1            & 1            & 5            & 6          & 66            & 1            & 0            \\ 
		& 6 & 6            & 5           & 46            & 2            & 7            & 9            & 6          & 14            & 18            & 15            \\ \hline
		\multicolumn{2}{c|}{Sum}                                                                                & 500          & 500         & 500          & 500          & 500          & 500          & 500        & 500          & 500          & 500          \\ \hline
		\multicolumn{2}{c|}{ARI}                                                                                & 0.930        & 0.557       & 0.888        & 0.986        & 0.983        & 0.920        & 0.495      & 0.951        & 0.986        & 0.989       \\\hline
	\end{tabular}
\end{table}

\begin{figure}[htb]
	\centering
	\includegraphics[trim=190 0 0 0,clip,width = 0.7\textwidth]{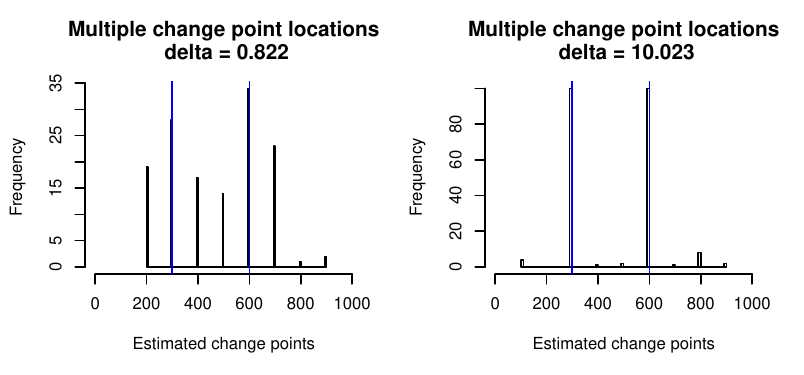}
	\includegraphics[trim=190 0 0 0,clip,width = 0.7\textwidth]{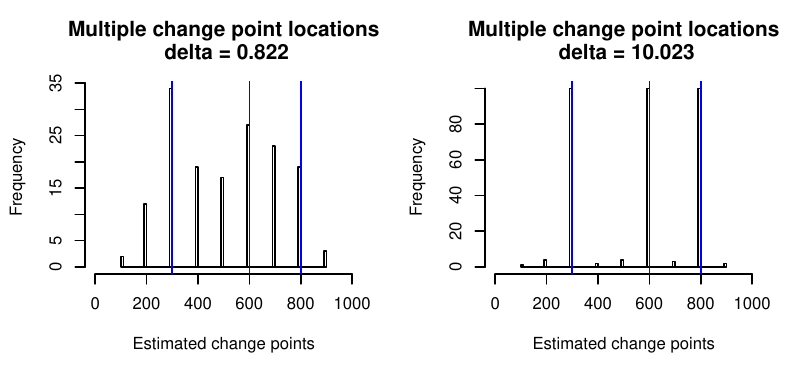}
	\caption{Multiple change point setup using \textit{sign} kernel at signal level $\delta=0.822, 10.023$. Upper: 2 change points $(m_1, m_2) = (300, 600)$. Lower:  3 change points $(m_1, m_2,m_3) = (300, 600, 800)$.}
	\label{fig:multiple_BD_sign_C6}
\end{figure}

Lastly, we set $M=1$ and repeat the experiment using linear kernel and Gaussian distribution with dependence structure (III). The results are summarized in Table~\ref{tab:linear_N6_M1} and Figure~\ref{fig:multiple_BD_sign_C6_M1}. Compared to Table~\ref{tab:linear_N6} and Figure~\ref{fig:multiple_BD_sign_C6} which correspond to the same setting but $M=100$, we can easily observe over rejection issue since more change points are concluded than the truth for both cases. However, when signal is large ($\delta = 2.004$), estimated change points still concentrate around the true $m_i$'s. In practice, a threshold $\underline{m}$ can be introduced to force merging two blocks if the cardinality of their union is small. 

\begin{table}[htb]
	\vskip .2cm
	\setlength\tabcolsep{2.5pt}
	\centering
	\caption{Estimation of multiple change points for $M=1$. Here, the data is  Gaussian distributed with dependence structure (III) and \textit{linear} kernel is used.}
	\label{tab:linear_N6_M1}
	\begin{tabular}{cc|ccccc|ccccc}
		\hline
		&   & \multicolumn{5}{c|}{$(m_1, m_2) = (300, 600)$}                                                   & \multicolumn{5}{c}{$(m_1, m_2, m_3) = (300, 600, 800)$}                                                  \\ \hline
		\multicolumn{2}{c|}{$\delta$}                                                                           & 0                         & 0.317                     & 0.733                     & 1.282                     & 2.004        & 0                         & 0.317                     & 0.733                     & 1.282                     & 2.004        \\ \hline
		\multirow{6}{*}{\begin{tabular}[c]{@{}c@{}}Estimated\\ number\\ of\\ change\\ points\end{tabular}} & 0   & \textbf{475}   & 205   & 0     & 0     & 0     & 477   & 195   & 0     & 0     & 0     \\
		&1   & 21    & 230   & 3     & 0     & 0     & 20    & 224   & 0     & 0     & 0     \\
		&2   & 3     & \textbf{59}    & \textbf{367}   & \textbf{343}   & \textbf{344}   & 3     & 77    & 51    & 0     & 0     \\
		&3   & 1     & 5     & 114   & 135   & 133   & 0     & \textbf{4}     & \textbf{324}   & \textbf{289}   & \textbf{293}   \\
		&4   & 0     & 1     & 16    & 22    & 23    & 0     & 0     & 111   & 167   & 172   \\
		&5   & 0     & 0     & 0     & 0     & 0     & 0     & 0     & 13    & 38    & 32    \\
		&6   & 0     & 0     & 0     & 0     & 0     & 0     & 0     & 1     & 6     & 2     \\
		&8   & 0     & 0     & 0     & 0     & 0     & 0     & 0     & 0     & 0     & 1     \\ \hline
		&Sum & 500   & 500   & 500   & 500   & 500   & 500   & 500   & 500   & 500   & 500   \\ \hline
		&ARI & 0.950 & 0.186 & 0.634 & 0.785 & 0.858 & 0.954 & 0.160 & 0.582 & 0.747 & 0.834   \\\hline 
	\end{tabular}
\end{table}

\begin{figure}[htb]
	\centering
	\includegraphics[trim=190 0 0 0,clip,width = 0.65\textwidth]{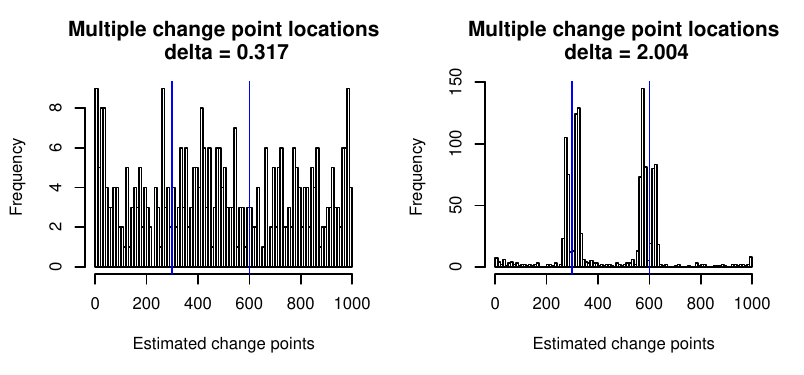}
	\includegraphics[trim=190 0 0 0,clip,width = 0.65\textwidth]{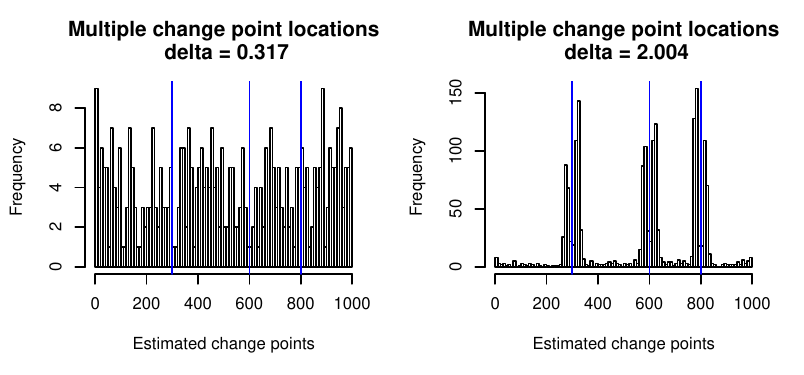}
	\caption{Multiple change point setup using $M=1$ and \textit{linear} kernel at signal level $\delta=0.317, 2.004$. Upper: 2 change points $(m_1, m_2) = (300, 600)$. Lower:  3 change points $(m_1, m_2, m_3) = (300, 600, 800)$.}
	\label{fig:multiple_BD_sign_C6_M1}
\end{figure}

{
	\clearpage
	\subsection{Simulation results for time series data} \label{subsec:sim_extension_time_series}
	We shall study the empirical performance of the bootstrap test for some dependent process $\xi_{i}$.
	In our simulation, we consider the stationary vector autoregression of order 1 (denote as VAR(1)) error process:
	$
	\xi_i = A\xi_{i-1} + \eta_i = \sum_{k=0}^{\infty} A^{k} \eta_{i-k},
	$ 
	where $\{\eta_{i}\}_{i \in \mathbb{Z}}$ is a sequence of i.i.d.\ mean-zero random vectors in $\mathbb{R}^{p}$ and $A$ is a $p \times p$ coefficient matrix, where random matrix $A$ is generated with i.i.d.\ $N(0,1)$ entries. To ensure the stationarity of $\xi_{i}$ process, $A$ is normalized such that $\|A\|_2=1/1.8<1$. 
	
	We first use the linear kernel $h(x,y)=x-y$ and consider different trimming parameters $M=2, 5, 10, 15$. We fix $n=500, p=600, B=200$ and $m=n/2$ under the location-shift model of single change point. Let $\hat{R}(\alpha)$ be the proportion of empirically rejected null hypothesis in 500 simulations. In Table~\ref{tab:H0_size_TS_linear} which provides uniform error-in-size $\sup_{\alpha \in [0,1]} |\hat{R}(\alpha) - \alpha|$, we can observe that larger $M$ needs to be selected if stronger dependence (i.e., the compound symmetry structure II) presents regardless of distribution families. This is the trade-off effect through $M$. We can also find that the best error-in-sizes in each column are comparable to the corresponding values in Table~\ref{tab:size_1}. This indicates the effectiveness of our modified  approach under temporal dependency. Figure~\ref{fig: AlphaEg_TS_linear} displays two examples of $\hat{R}(\alpha)$ under $H_0$ and power under $H_{1}$, where the signal vector is chosen as $\theta = (\theta_{1},0,\dots,0)^{T}$ such that $\theta_1 = |\theta|_\infty$.
	
	Next we use sign kernel $h(x,y)=\sign(x-y)$ and consider the trimming parameters $M=2, 5, 10$. For illustration purpose, we only select the data-generating schemes of Cauchy distribution with Covariance I-III and ctm-Gaussian distribution with Covariance III. The other parameters remain the same as above. The uniform error-in-size for each scenario is give in Table~\ref{tab:H0_size_TS_sign}.
	In general, $M=2$ works the best under each scenario. The non-linear projection by the sign kernel makes the correlation between data pairs weaker. Therefore, it makes the sign kernel more attractive in terms of its robustness against weak temporal dependency. Similarly, two examples are given in Figure~\ref{fig: AlphaEg_TS_sign}.
	
	\begin{table}[htb]
		\caption{\label{tab:H0_size_TS_linear} Uniform error-in-size $\sup_{\alpha \in [0,1]} |\hat{R}(\alpha) - \alpha|$ using linear kernel under $H_0$, where $\xi_i$ are from VAR(1) process. The columns represent distribution families and covariance dependence structures of $\eta_i$ defined in Section 5.1. The rows correspond to different trimming parameter $M$. The smallest errors in each column are highlighted in bold. }
		\setlength\tabcolsep{2.5pt}
		\centering	
		\begin{tabular}{c|ccc|ccc|ccc}
			\hline
			\multirow{2}{*}{$\sup_{\alpha \in [0,1]} |\hat{R}(\alpha) - \alpha|$} & \multicolumn{3}{c|}{Gaussian}                     & \multicolumn{3}{c|}{$t_6$}                        & \multicolumn{3}{c}{ctm-Gaussian}                 \\ \cline{2-10}
			& I              & II             & III            & I              & II             & III            & I              & II             & III            \\ \hline
			M=2                                                  & \textbf{0.058} & 0.092          & \textbf{0.030} & \textbf{0.054} & 0.082          & \textbf{0.028} & \textbf{0.038} & 0.086          & \textbf{0.054} \\
			M=5                                                  & 0.076          & 0.088          & 0.056          & 0.092          & \textbf{0.074} & 0.050          & 0.058          & \textbf{0.082} & 0.086          \\
			M=10                                                 & 0.128          & \textbf{0.064} & 0.102          & 0.134          & 0.080          & 0.080          & 0.106          & 0.084          & 0.136          \\
			M=15                                                 & 0.180          & 0.066          & 0.150          & 0.172          & 0.086          & 0.126          & 0.156          & 0.094          & 0.174         \\ \hline
		\end{tabular}
	\end{table}

	\begin{figure}[htb]
		\begin{subfigure}[t]{\textwidth}
			\centering
			\includegraphics[trim = 0 10 30 45, clip, width=0.85\textwidth]{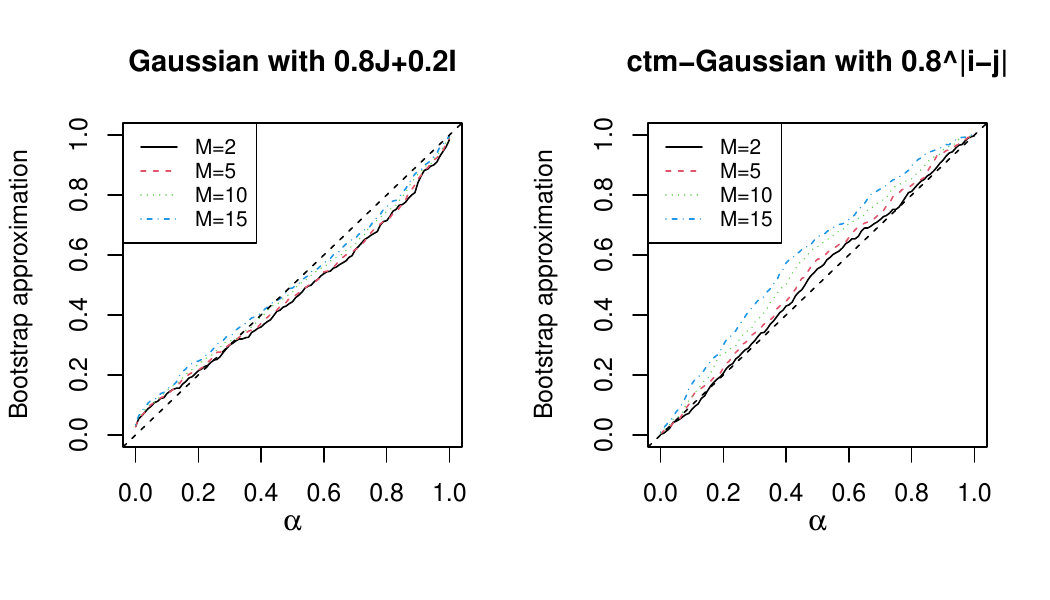}
			\label{fig:TS_linear_subfig:a}
			\vspace{-0.25in}
			\caption{Empirical rejection rate $\hat{R}(\alpha)$ under $H_0$ for $0 \le \alpha \le 1$.}
		\end{subfigure}
		%    	\hspace{0.15in}
		\\
		\begin{subfigure}[t]{\textwidth}
			\centering
			\includegraphics[trim = 0 10 30 45, clip, width=0.85\textwidth]{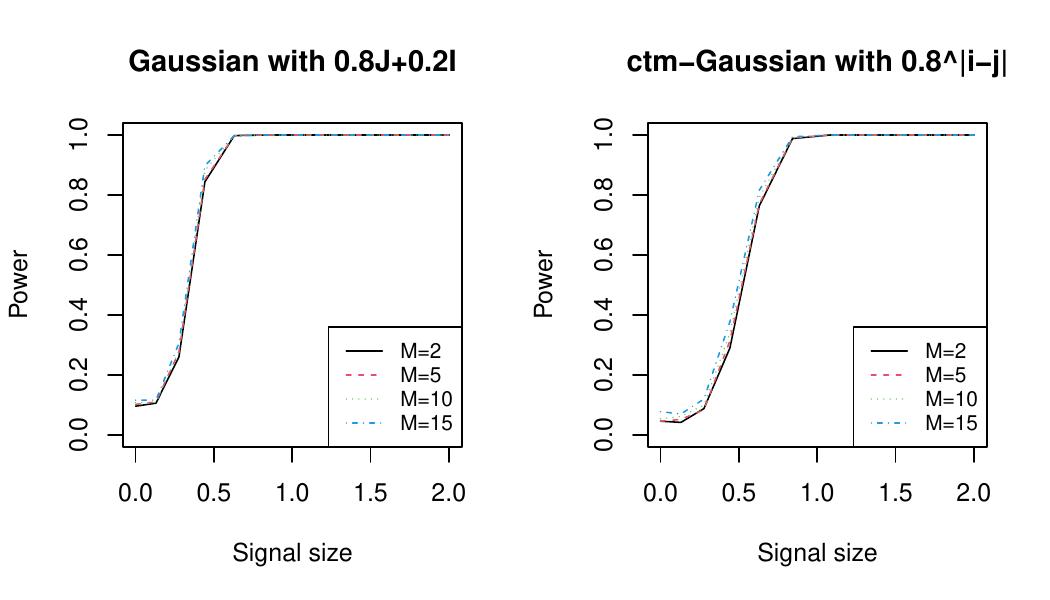}
			\label{fig:TS_linear_subfig:b}
			\vspace{-0.1in}
			\caption{Power under $H_1$ for $0 \le \theta_1 \le 2$.}
		\end{subfigure}
		\caption{Empirical rejection rate $\hat{R}(\alpha)$ under $H_0$ and power under $H_1$ in selected time-series data-generating schemes: (Left) Gaussian distribution with covariance structure II; (Right) ctm-Gaussian distribution with covariance structure III. (Parameters: $n=500$, $p=600$, kernel $h(x,y)=x-y$, and trimming parameters $M = 2,5,10,15$.) } 
		\label{fig: AlphaEg_TS_linear} 
	\end{figure}
	
	\begin{table}[htb]
		\caption{\label{tab:H0_size_TS_sign} Uniform error-in-size $\sup_{\alpha \in [0,1]} |\hat{R}(\alpha) - \alpha|$ using sign kernel under $H_0$, where $\xi_i$ are from VAR(1) process. The columns represent distribution families with covariance structures of $\eta_i$ defined in Section 5.1. The rows correspond to different trimming parameter $M$. The smallest errors in each column are highlighted in bold. }
		\setlength\tabcolsep{2.5pt}
		\centering	
		\begin{tabular}{c|ccc|c}
			\hline
			$\sup_{\alpha \in [0,1]} |\hat{R}(\alpha) - \alpha|$ & Cauchy (I) & Cauchy (II) & Cauchy (III) & ctm-Gaussian (III) \\ \hline
			M=2                                                  & \textbf{0.068}      & \textbf{0.057}       &    \textbf{0.068}           & \textbf{0.060}              \\
			M=5                                                  & 0.094      & 0.062       &     0.096         & 0.088              \\
			M=10                                                 & 0.144      & 0.078       &    0.150          & 0.142     \\\hline        
		\end{tabular}
	\end{table}

	\begin{figure}[htb]
		\begin{subfigure}[t]{\textwidth}
			\centering
			\includegraphics[trim = 0 10 30 45, clip, width=0.85\textwidth]{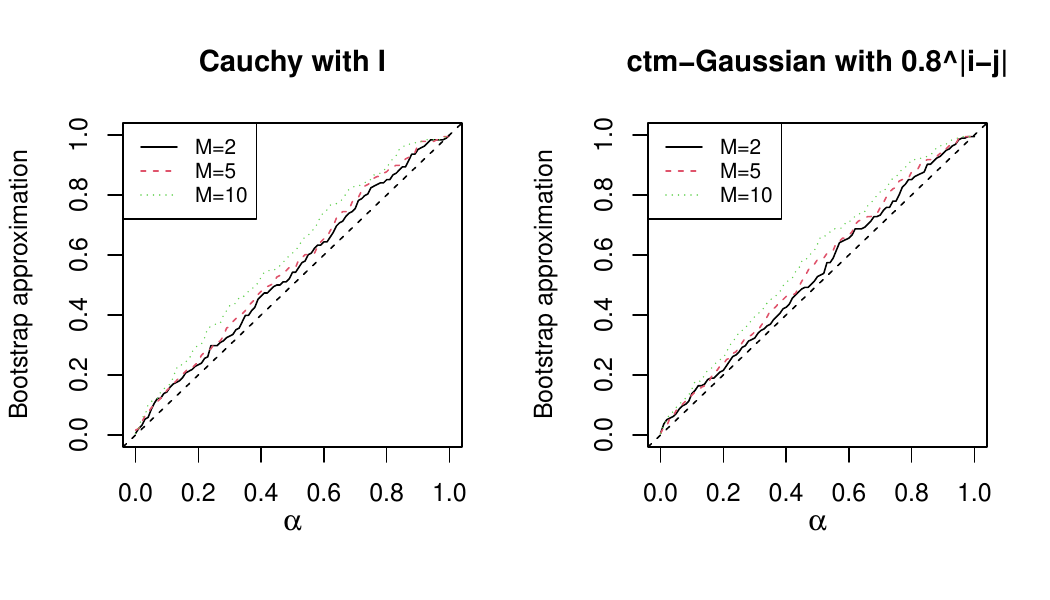}
			\label{fig:TS_sign_subfig:a}
			\vspace{-0.25in}
			\caption{Empirical rejection rate $\hat{R}(\alpha)$ under $H_0$ for $0 \le \alpha \le 1$.}
		\end{subfigure}
		%    	\hspace{0.15in}
		\\
		\begin{subfigure}[t]{\textwidth}
			\centering
			\includegraphics[trim = 0 10 30 45, clip, width=0.85\textwidth]{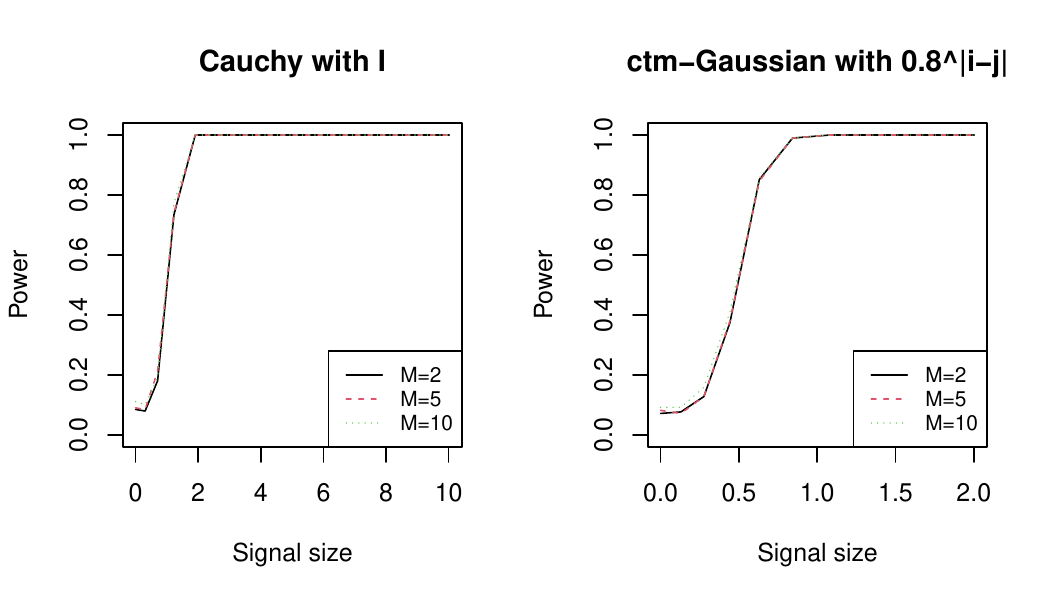}
			\label{fig:TS_sign_subfig:b}
			\vspace{-0.1in}
			\caption{Power under $H_1$ for $0 \le \theta_1 \le 2$.}
		\end{subfigure}
		\caption{Empirical rejection rate $\hat{R}(\alpha)$ under $H_0$ and power under $H_1$ in selected time-series data-generating schemes: (Left) Cauchy distribution with covariance structure II; (Right) ctm-Gaussian distribution with covariance structure III. (Parameters: $n=500$, $p=600$, kernel $h(x,y)=\sign(x-y)$, and trimming parameters $M = 2,5,10$.) } 
		\label{fig: AlphaEg_TS_sign} 
	\end{figure}
	
}
%\vspace*{-10pt}
\clearpage
\section{Real Data Applications}
\label{sec:real_data}
\subsection{Single change point: Enron email dataset}
Enron Corporation used to be one of the leading American energy companies. In an accounting scandal, Enron share prices decreased from around \$80 during the summer of 2000 to pennies at the end of 2001. The bankruptcy was filed on 12/02/2001 and it became the largest bankruptcy reorganization in American history at that time.	
The Enron email dataset that contains more than 500,000 messages from about 150 users (mostly senior management) was publicly available during the investigation by the Federal Energy Regulatory Commission in 2002
\footnote{
	The raw data is organized in folders (\url{http://www.cs.cmu.edu/~enron/}) and its tabular format version is available at \url{https://data.world/brianray/enron-email-dataset}. The timeline of major events can be found at \url{http://www.agsm.edu.au/bobm/teaching/BE/Enron/timeline.html}.
} .

We study the collection of messages sent in 2000-2001. To test for the existence of an abrupt changes in email discussions, our analysis is based on the number of emails sent from each user. In order to exclude the yearly trend and temporal dependence, we apply our method to $X_{ij}$  which is the difference of emails sent from user $j$ on the $i$-th day for the two years.  The leap day (02/29/2000) and the users who were inactive during 2000 or 2001 are removed such that the final data matrix $(X_{ij})_{i=1,\dots,n; j=1,\dots,p}$ is of dimension $n=365$ and $p=101$.  We set bootstrap repetition number $B = 2000$.
For the linear kernel, our test statistic has the value $\overline{T}_n = 561.49$ and the 95\% quantile of bootstrapped statistic is 117.17. For the sign kernel, our test statistic has the value $\overline{T}_n = 8.95$ and the 95\% quantile of bootstrapped statistic is 1.44. Both tests reject the null hypothesis of no abrupt change. 
As an illustration of the test results, the aggregated trend of $Y_i = \sum_{j=1}^{101} X_{ij}$ in Figure~\ref{fig: realdata_Enron_Yi} indicates the presence of extensive email communication from the second half of 2000 to the first half of 2001.  Our test confirms that there was abnormal email activity in these two years.

\begin{figure}[htb] 
	\centering
	\includegraphics[width= 3.4in]{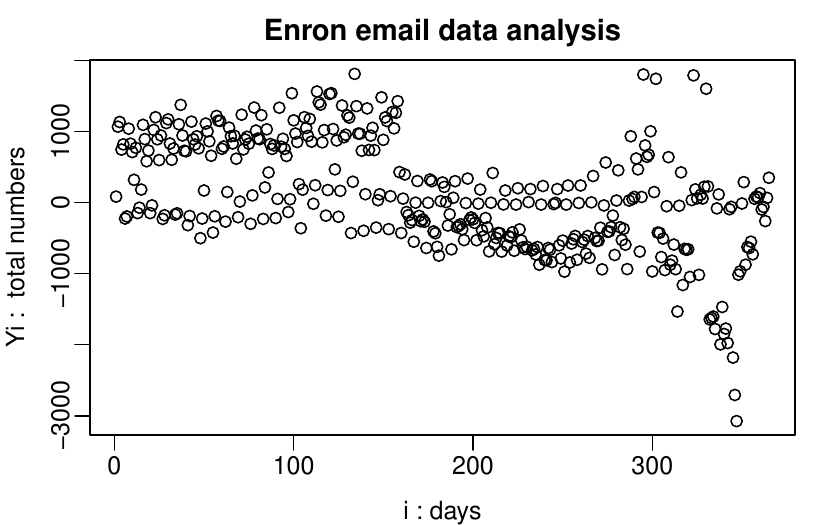} 
	\caption{Trend of $Y_i = \sum_{j=1}^{101} X_{ij}$ for Enron email dataset.}
	\label{fig: realdata_Enron_Yi} 
\end{figure}

\subsection{Multiple change point: Micro-array dataset}
The array comparative genomic hybridization data, \texttt{ACGH} \cite[R package \texttt{ecp}]{james2013ecp}, consists of $p=43$ patients with bladder tumor. We consider to detect change points among their DNA copy number profiles each of which contains $n=2215$ log-intensity-ratio fluorescent measurements.
We apply the BD algorithm using linear kernel and set bootstrap repeats $1000$, significance level $\alpha=0.01$ and initial data block size $M=2$.
The measurements for the first 10 individuals are shown in Figure~\ref{fig:bd_real}.
Our BD algorithm finds 32 change points that marked in red vertical dashed lines. This number is in a reasonable level as indicated in \cite{wangsamworth2017} where the authors only reported 30 most significant ones while their default \texttt{Inspect} algorithm found 254 change points.
The ARI between ours and the bootstrap-assisted binary segmentation \cite[\texttt{BABS}]{yuchen2017finite} which identifies 27 change points is 0.779. As shown in Table~\ref{tab:acgh_compare}, the two methods have overlapped detection that are close loci numbers such as $(73,74), (342,344), (521,528), \dots, (2143, 2142) $.

\begin{figure}[htb]
	\centering
	\includegraphics[width = 0.99\textwidth]{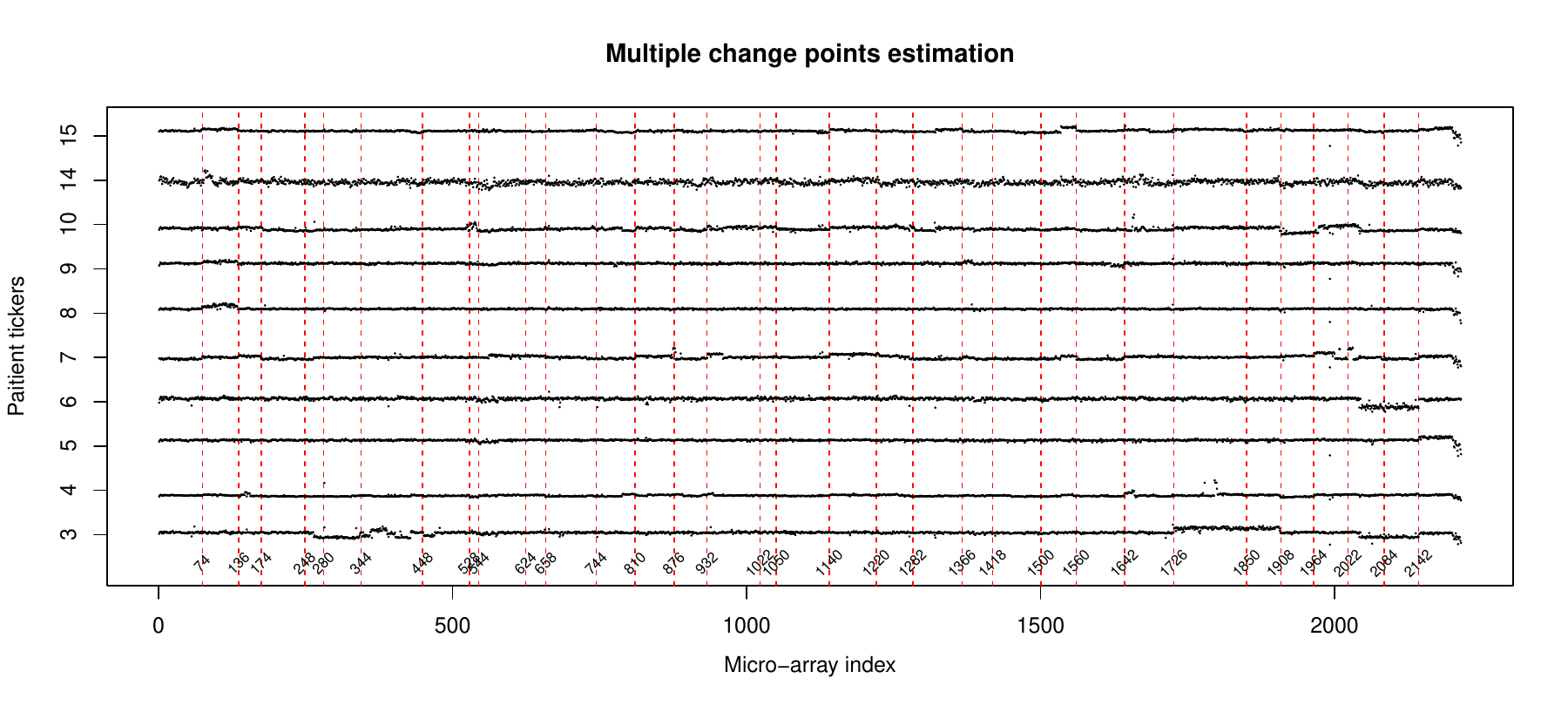}
	\caption{Real data study: aCGH data. Here, we set $B=1000, \alpha=0.01$ and the linear kernel.}
	\label{fig:bd_real}
\end{figure}

\begin{table}[!htp]
	\centering
	\caption{Identified change point locations (loci numbers on genome) in \texttt{ACGH} dataset. }
	\label{tab:acgh_compare}
	\begin{tabular}{p{1cm}|p{10.2cm}}
		\hline
		BABS   & 73, 185, 263, 342, 428, 521, 581, 657, 741, 801, 871, 960, 1051, 1141, 1216, 1276, 1367, 1427, 1503, 1563, 1664, 1724, 1836, 1905, 1965, 2044, 2143.                       \\\hline
		BD     & 74, 136, 174, 248, 280, 344, 448, 528, 544, 624, 658, 744, 810, 876, 932, 1022, 1050, 1140, 1220, 1282, 1366, 1418, 1500, 1560, 1642, 1726, 1850, 1908, 1964, 2022, 2084, 2142. \\\hline
	\end{tabular}
\end{table}

\appendix

\section{Proofs and additional numeric results}
\subsection{Proof of main results}
%\subsection*{}
Throughout the whole proofs, we assume $d \ge 2$, $n \ge 3$ and $n \ge \log^7(nd)$ otherwise the rates will automatically hold. The  $K_i>0, i= 1,2,\dots$ and $C>0$ are large constants that may vary part by part. 

\begin{proof}[Proof of Theorem~\ref{thm:gaussian_approx_rate}]
	Suppose $H_{0}$ is true. Without loss of generality, we may assume $\varpi_{n} \le 1$. 
	
	\underline{\it Step 1. Gaussian approximation to $T_{n}$.} 
	
	Denote $\Gamma = \Cov(g(X_{1}))$. Since the kernel $h$ is anti-symmetric, we have $\E[g(X_{1})] = \vzero$. Thus $\E [L_n] = \vzero$ and 
	%\vspace*{-15pt}
	\begin{equation*}
	\setlength\abovedisplayskip{0pt}
	\setlength\belowdisplayskip{2pt}
	\Cov(L_n) = n {n \choose 2}^{-2} \sum_{i=1}^n (n+1-2i)^2 \ \Cov(g(X_{i})) = {4(n+1) \over 3(n-1)} \Gamma.
	\end{equation*}
	By Jensen's inequality, we have $\E|g_{j}(X_{i})|^{2+k} \le D_{n}^{k}$ for $k = 1,2$, and $\|g_{j}(X_{i})\|_{\psi_{1}} \le D_{n}$. Then it follows 
	%	\vspace*{-15pt}
	\begin{equation*}
	\setlength\abovedisplayskip{2pt}
	\setlength\belowdisplayskip{2pt}
	{1 \over n} \sum_{i=1}^{n} \left( { 2 \over n-1} \right)^{2+k} |n-2i+1|^{2+k} \E|g_{j}(X_{i})|^{2+k} \lesssim D_{n}^{k} 
	, \quad
	\left\| {2(n-2i+1) \over n-1} g_{j}(X_{i}) \right\|_{\psi_{1}} \lesssim D_{n}. 
	\end{equation*}
	In addition, note that 
	${1 \over n} \sum_{i=1}^{n} 4 \left( {n-2i+1 \over n-1} \right)^{2} \Gamma_{jj} = {n+1 \over n-1} \cdot {4 \over 3} \Gamma_{jj} \ge {4 \over 3} \underline{b} > 0. 
	$
	By Proposition 2.1 in \cite{cck2016a} (applied to the max-hyperrectangles), we have 
	%\vspace*{-8pt}
	\[
	\rho(\overline{L}_n,\overline{Z}_{n}) \le \left\{ D_n^2 \log^7 (nd) \over n \right\}^{1/6} = \varpi_{n}, 
	\]
	where $\overline{Z}_{n} = \max_{1 \le j \le d} Z_{nj}$ and $Z_{n} \sim N(0, {4(n+1) \over 3(n-1)} \Gamma)$. Let $Z \sim N(0, 4\Gamma/3)$. By the Gaussian comparison inequality (cf. Lemma C.5 in \cite{chen2017randomized}), we have 
	%	\vspace*{-10pt}
	\[
	\setlength\abovedisplayskip{0pt}
	\setlength\belowdisplayskip{2pt}
	\rho(\overline{Z}_n,\overline{Z}) \lesssim  \left( {4 \over 3n} |\Gamma|_{\infty} \log^{2}{d} \right)^{1/3}. 
	\]
	Since $\Gamma_{jj} \le 1 + \E|g_{j}(X_{1})|^{3} \le 1+D_{n} \le 2 D_{n}$,
	it follows from the Cauchy-Schwarz inequality that 
	%\vspace*{-5pt}
	\[
	\setlength\abovedisplayskip{2pt}
	\setlength\belowdisplayskip{0pt}
	\rho(\overline{Z}_n,\overline{Z}) \lesssim \left( {D_{n} \log^{2}{d} \over n} \right)^{1/3} \lesssim \varpi_{n}. 
	\]
	Then by triangle inequality, we have 
	%	\vspace*{-5pt}
	\begin{equation}
	\label{eqn:gaussian_approx_linear_part}
	\setlength\abovedisplayskip{0pt}
	\setlength\belowdisplayskip{2pt}
	\rho(\overline{L}_n,\overline{Z}) \le \rho(\overline{L}_n,\overline{Z}_{n}) + \rho(\overline{Z}_n,\overline{Z}) \lesssim \varpi_{n}.  
	\end{equation}
	Applying Corollary 5.6 in \cite{chenkato2017a} with $k=2$, we have 
	%	\vspace*{-8pt} 
	\begin{equation}
	\label{eqn:gaussian_approx_remainder_part}
	\setlength\abovedisplayskip{0pt}
	\setlength\belowdisplayskip{0pt}
	\E\left( \max_{1 \le j \le d} |R_{nj}| \right) \lesssim {D_{n} {n}^{-1/2} \log{d}  }. 
	\end{equation}
	Then for any $t \in \R$ and $a > 0$, we have 
	\begin{align*}
	\Prob\left( \overline{T}_n \le t \right) &\le \Prob\left( \overline{L}_n \le t+a^{-1} \E[|R_{n}|_{\infty}] \right) + \Prob\left( |R_{n}|_{\infty} > a^{-1} \E[|R_{n}|_{\infty}] \right) \\
	&\le_{(i)} \Prob\left( \overline{L}_n \le t+a^{-1} \E[|R_{n}|_{\infty}] \right) + a \\
	&\le_{(ii)} \Prob\left( \overline{Z} \le t+a^{-1} \E[|R_{n}|_{\infty}] \right) + C \varpi_{n} + a \\
	&\le_{(iii)} \Prob\left( \overline{Z} \le t \right) + C a^{-1} \E[|R_{n}|_{\infty}] {\log^{1/2}{d}} + C \varpi_{n} + a \\
	&\le_{(iv)} \Prob\left( \overline{Z} \le t \right) + C  {D_{n} a^{-1} n^{-1/2} \log^{3/2}{d} } + C \varpi_{n} + a, 
	\end{align*}
	where step $(i)$ follows from Markov's inequality, step $(ii)$ from the Gaussian approximation error bound \eqref{eqn:gaussian_approx_linear_part} for the linear part, step $(iii)$ from Nazarov's inequality (cf. Lemma A.1 in \cite{cck2016a}), and step $(iv)$ from the maximal inequality \eqref{eqn:gaussian_approx_remainder_part} for the degenerate term. Likewise, we can deduce the reverse inequality 
	%	\vspace*{-5pt}
	\[
	\setlength\abovedisplayskip{2pt}
	\setlength\belowdisplayskip{2pt}
	\Prob\left( \overline{T}_n \le t \right) \ge \Prob\left( \overline{Z} \le t \right) - C  {D_{n} a^{-1} n^{-1/2} \log^{3/2}{d} } - C \varpi_{n} - a. 
	\]
	Choosing $a = n^{-1/4} D_{n}^{1/2} \log^{3/4}{d}$, we get $\rho(\overline{T}_n,\overline{Z}) \le C \varpi_{n}$.
	
	\underline{\it Step 2. Bootstrap approximation to $T_{n}$.} 
	Recall the definition of $T_n^\sharp$ in (\ref{eqn:Tn_sharp}), $T_n^\sharp | X_1^n \sim N(\vzero, 4\hat{\Gamma}_n)$ where
	%	\vspace*{-8pt}
	\begin{equation}
	\label{eqn:cov_Tn_sharp_Gamma}
	\setlength\abovedisplayskip{0pt}
	\setlength\belowdisplayskip{2pt}
	\hat{\Gamma}_n = {1 \over n(n-1)^2} \sum_{i=1}^n \sum_{j=i+1}^n \sum_{k=i+1}^n h(X_i, X_j) h(X_i, X_k)^T.
	\end{equation}
	By Lemma \ref{lem:cov_Tn_sharp},
	%	\vspace*{-22pt}
	%	\[
	$\Prob \left(  |\hat{\Gamma}_{n} - \Gamma/3|_\infty  \ge K_3 \left\{ D_n^2 \log (nd) \over n \right\}^{1/2}  \right)
	\le  \gamma.$
	%	\]
	Therefore, \cite[Lemma C.1]{chen2018gaussian} confirms that with probability greater than $1-\gamma$
	%\vspace*{-8pt}
	\begin{align*}
	\rho(\overline{Z}, \overline{T}_n^\sharp | X_1^n) \lesssim \left[ |4\hat{\Gamma}_{n} - 4 \Gamma/3|_\infty \log^2(nd) \right]^{1/3}  \asymp \left\{ D_n^2 \log^5 (nd) \over n \right\}^{1/6} \lesssim \varpi_n .
	\end{align*}
	In conclusion, $\rho(\overline{T}_n, \overline{T}_n^\sharp | X_1^n) \le \rho(\overline{T}_n, \overline{Z}) + \rho(\overline{Z}, \overline{T}_n^\sharp \mid X_1^n) \le  C(\ub, K)  \varpi_n$.
\end{proof}

{ 
	\begin{proof}[Proof of Theorem~\ref{thm:gaussian_approx_rate_improve}]
		This proof is similar to the proof of Theorem~\ref{thm:gaussian_approx_rate} so that only the key steps are given below. Without loss of generality, we may assume $\varpi'_n \leq 1$.
		
		\underline{\it Step 1. Gaussian approximation to $T_{n}$.} 
		Let $\Gamma = \Cov(g(X_{1}))$, then $\Cov(L_n)  = {4(n+1) \over 3(n-1)} \Gamma$. So the correlation matrix of $L_n$ is the same as the correlation matrix of $g(X_1)$. By (A1), $\sigma_j^2 = \Cov_{jj}(L_n) \ge \ub$. By Jensen's inequality, under (A2), we have $\E|g_{j}(X_{i})|^{2+k} \le D_{n}^{k}$ for $k = 1,2$, whereas under (A3'), we have $\|g_{j}(X_{i})\|_{\psi_{2}} \le D_{n}$. Therefore, 
		\begin{align*}
		&{1\over n} \sum_{i=1}^n \E \left| {2 (n-2i+1) \over (n-1) \sigma_j} g_j(X_i) \right|^4 \le {2 \over n} \sum_{i=1}^n \E \left| g_j(X_i) \right|^4 / \sigma_j^4 \le 2 D_n^2/\ub^2 \le B_n^2,\\
		&\left\Arrowvert {2 (n-2i+1) \over (n-1) \sigma_j} g_j(X_i) \right\Arrowvert_{\psi_2} \le 2	|| g_j(X_i) ||_{\psi_2} / \sigma_j \le 2 D_n/\ub^{1/2} \le B_n,
		\end{align*}
		where $B_n = 2 D_n (\ub^{-1} \vee \ub^{-1/2})$. By \cite[Corollary 2.1]{cck2020}, the Conditions (M) and (E.2) are satisfied so that
		\begin{align}
		\label{eqn:Ln_Z_n_approx_improve}
		\rho(\overline{L}_n,\overline{Z}_{n}) \le K_1 \left( {B_n (\log d)^{3/2} (\log n) \over n^{1/2} \sigma_*^2} \vee {B_n (\log d)^{2} \over  n^{1/2} \sigma_*} \right) \le C_1(\ub)(\sigma_*^{-2} \vee \sigma_*^{-1}) \varpi'_n,
		\end{align}
		where $\overline{Z}_{n} = \max_{1 \le j \le d} Z_{nj}$ and $Z_{n} \sim N(0, {4(n+1) \over 3(n-1)} \Gamma)$. 
		Let $Z \sim N(0, 4\Gamma/3)$. We still have
		\[
		\rho(\overline{Z}_n,\overline{Z}) \le  C_2(\ub) {D_{n}^{1/3} \log^{2/3}{d} \over n^{1/3}} \le C_2(\ub) \varpi'_n. 
		\]
		Hence, by triangle inequality, $\rho(\overline{L}_n,\overline{Z}) \le \rho(\overline{L}_n,\overline{Z}_{n}) + \rho(\overline{Z}_n,\overline{Z}) \le C_3 (\ub, \sigma_*) \varpi'_{n}$, where $C_3 (\ub, \sigma_*) \le  C_1(\ub) (\sigma_*^{-2} \vee \sigma_*^{-1}) +  C_2(\ub)$.
		
		Note that, by choosing $a = n^{-1/4} D_{n}^{1/2} \log^{3/4}{d}$, the following approximation still holds
		\begin{equation*}
		\rho(\overline{T}_n,\overline{Z}) \le C_3 {D_{n} a^{-1} n^{-1/2} \log^{3/2}{d} } + C_3 \varpi'_{n} + a \le (2 C_3+1) \varpi'_n.
		\end{equation*}
		
		\underline{\it Step 2. Bootstrap approximation to $T_{n}$.} 
		Recall $T_n^\sharp | X_1^n \sim N(\vzero, 4\hat{\Gamma}_n)$. 
		By \cite[Lemma 2.1]{cck2020},
		\begin{align*}
		\rho(\overline{Z}, \overline{T}_n^\sharp | X_1^n) \le K_2  {V \log d \over \ub^2 \sigma_*^2} \left( 1 \vee |\log {V \over \ub^2 \sigma_*^2}| \right),
		\end{align*}
		where
		\[
		\Prob \left(  V = |\hat{\Gamma}_{n} - \Gamma/3|_\infty  \le K_3 D_n n^{-1/2}  \log^{1/2} (nd)   \right)	\ge  1-\gamma
		\]
		by Lemma~\ref{lem:cov_Tn_sharp}.
		Without loss of generality, we assume $\varpi'_{n} \le 1$. Then, with probability greater than $1-\gamma$, ${V} \le K_3 n^{-1/4} \log^{-1}n \log^{-1/2} d \le K_3 n^{-1/4}$.
		If $V \ub^{-2}\sigma_*^{-2} \ge 1$, then 
		$$
		|\log {V \over \ub^2\sigma_*^2}| \le C_4(\ub, \sigma_*)\log(n), 
		$$ 
		so that $\rho(\overline{Z}, \overline{T}_n^\sharp | X_1^n) \le C_5(\ub,\sigma_*) V (\log d) (\log n)$ and therefore (\ref{eqn:gaussian_approx_rate_improve}) holds.
		If $V \ub^{-2}\sigma_*^{-2}  < 1$, then observing that the function  $f(x) = x|\log x|\le e^{-1}(1-t)^{-1}x^{t}$ for any $0<t<1$ on $x \in (0, 1)$, we have 
		$$
		\rho(\overline{Z}, \overline{T}_n^\sharp | X_1^n) \le K_2 (\log d) (V \ub^{-2}\sigma_*^{-2})^t.
		$$
		Taking $t=1/2$ and plugging in $V \le K_3 D_n n^{-1/2}  \log^{1/2} (nd)$, we still have (\ref{eqn:gaussian_approx_rate_improve}) holds with probability greater than $1-\gamma$. 
	\end{proof}
}

\begin{proof}[Proof of Theorem~\ref{thm:power_signal_rate}]
	Denote $T_n = T_n(X_1^n) = {n}^{1/2} {n \choose 2}^{-1} \sum_{1 \le i < j \le n} h(X_i, X_j)$ and $T_n^\xi = T_n(\xi_1^n) = {n}^{1/2} {n \choose 2}^{-1} \sum_{1 \le i < j \le n} h(\xi_i, \xi_j)$. Define
	%	\vspace*{-5pt}
	\[
	\setlength\abovedisplayskip{0pt}
	\setlength\belowdisplayskip{1pt}
	\tilde{\Delta} = n^{-1/2} {n \choose 2} \{ T_n(X_1^n) - T_n(\xi_1^n) \} = \sum_{1 \le i < j \le n} h(X_i, X_j) - h(\xi_i, \xi_j).
	\]
	Note that, $\overline{T}_n^\xi = |T_n(\xi_1^n)|_\infty \ge 2 n^{-1/2} (n-1)^{-1}|\tilde{\Delta}|_\infty -	\overline{T}_n$.
	It follows that
	
	%\vspace*{-5pt}
	\begin{align*}
	\textrm{Type II error} &= \Prob \left( \overline{T}_n \le q_{\overline{T}_n^\sharp \mid X_{1}^{n}} (1-\alpha) \mid H_1 \right) \\
	&\le \Prob \left( \overline{T}_n^\xi \ge 2 n^{-1/2} (n-1)^{-1} |\tilde{\Delta}|_\infty - q_{\overline{T}_n^\sharp \mid X_{1}^{n}} (1-\alpha) \mid H_1 \right) \\
	&\le \Prob \left(\overline{T}_n^\xi \ge q_{\overline{T}_n^\xi} (1-\beta_n)  \mid H_1\right) \\
	& \qquad + \Prob \left ( q_{\overline{T}_n^\sharp \mid X_{1}^{n}} (1-\alpha) + q_{\overline{T}_n^\xi} (1-\beta_n) \ge 2 n^{-1/2} (n-1)^{-1} |\tilde{\Delta}|_\infty  \mid H_1\right) \\
	&\le \beta_n + \Prob \left ( q_{\overline{T}_n^\sharp \mid X_{1}^{n}} (1-\alpha) + q_{\overline{T}_n^\xi} (1-\beta_n) \ge 2 n^{-3/2} |\tilde{\Delta}|_\infty  \mid H_1\right).
	\end{align*}
	Let $\gamma = \zeta/8$. Now denote
	%\vspace*{-15pt} 
	\begin{align*}\hspace{40pt}
	\Delta_1 &=  \gamma^{-1} D_n \log (d) \{m(n-m)\}^{1/2} , \\
	\Delta_2 &=  D_n  \{m(n-m)\}^{1/2} \{m \wedge (n-m)\}^{1/2} \log^{1/2}(nd), \\
	\Delta_3 &=   D_n n^{3/2} \log^{1/2} (nd / \alpha),\\
	\Delta_4 &=  n^{3/2}  \log^{1/2} (\gamma^{-1}) \log^{1/2} (d).
	\end{align*}
	%		We will show that when $m(n-m) ||\theta_h||_\infty > ( K \gamma_2^{-1} \Delta_1 + K\Delta_2 + K\Delta_3 + C(\ub) \Delta_4)$ for some large enough constant $K$ and $C(\ub)$, then 
	%		\begin{equation*}
	%		\Prob ( T_n > q_{\overline{T}_n^\sharp} (1-\alpha) ) \ge 1- 4\gamma_2 - 5\gamma_1 - 2 C_1 \varpi_n.
	%		\end{equation*}
	We will quantify  $|\tilde{\Delta}|_\infty$, $q_{\overline{T}_n^\sharp} (1-\alpha)$ and $q_{\overline{T}_n^\xi} (1-\beta_n)$ to conclude that the Type II error is bounded when $|\theta_h|_\infty$ satisfies (\ref{eqn:signal_rate}). 
	
	\emph{(1) Quantify $|\tilde{\Delta}|_\infty$.} 	Without loss of generality, we may assume $n_1 = m \le n- m = n_2$. 
	%		Let the projections of $h(x,y)$ be
	%		\[
	%		Gh(x) = \E [ h(x,Y_1) ] - \theta_h\quad Fh(y) = \E [ h(X_1,y) ] - \theta_h,
	%		\]
	%		such that $\E[Gh(X_1)] = \E [Fh(Y_1)] = \vzero$.  Define 
	%		\[
	%		\breve{f}(x,y) = h(x,y) - Gh(x) - Fh(y) - \theta_h,
	%		\]
	%		which is a degenerate term such that $\E [\breve{f}(X_1,Y_1)] = \E [\breve{f}(X_1,y)] = \E [\breve{f}(x,Y_2)] = \vzero$.   
	%		Recall the linear term in one-sample Hoeffding decomposition $g(x) = \E h(x, X_2)$. By the shift-invariant and anti-symmetric properties of $h(x,y)$, the two projections can be simplified as 
	%		\begin{align*}
	%		&Gh(x) = \E [ h(x-\theta, Y_1-\theta) ] - \theta_h = g(x-\theta) -\theta_h, \quad Fh(y) = \E [ -h(y,X_1) ] - \theta_h = -g(y) -\theta_h.
	%		\end{align*}
	%		Besides, $\E g(X_1 + \theta) = -\theta_h, \ \E g(X_1 - \theta) = \theta_h$ and $\E \breve{f}(X_1 + \theta, Y_1) = \breve{f}(X_1, Y_1 - \theta) = \vzero$.
	%		The decomposition of $h(x,y)$ leads to the Hoeffding decomposition of the two-sample $U$-statistic
	%		\begin{equation*}
	%		V_n(X_1^n) = \sum_{i=1}^{n_1} \sum_{j=1}^{n_2} h(X_i, Y_j) = n_1 n_2 \theta_h + n_2 \sum_{i=1}^{n_1} Gh(X_i) + n_1 \sum_{j=1}^{n_2} Fh(Y_j) +  \sum_{i=1}^{n_1} \sum_{j=1}^{n_2} \breve{f}(X_i, Y_j).
	%		\end{equation*} 
	%		%Since $V_n$ brings nonzero expectation toward the summation in $T_n$ (\ref{eqn:Tn_scaled}) under $H_1$, we will find the rate of $n_1 n_2 |\theta_h|_\infty$ such that it exceeds the magnitude of $||V_n||_\infty$ under $H_0$.
	Recall (\ref{eqn:hoeffding_decomp_two-sample}) where $V_n = V_n(X_1^n)$.
	Denote $V_n(\xi_1^n)$ in similar way. By shift-invariant assumption and the two-sample projection in Section \ref{sec:bootstrap},
	%\vspace*{-10pt} 
	\begin{eqnarray}
	\tilde{\Delta} &=& V_n(X_1^n) - V_n(\xi_1^n) = \sum_{i=1}^{n_1} \sum_{j=1}^{n_2} h(X_i, Y_j) - h(X_i, Y_j - \theta) \nonumber   \\[-2mm]	
	&=& \sum_{i=1}^{n_1} \sum_{j=1}^{n_2} g(Y_j - \theta) -g(Y_j) + \breve{f}(X_i,Y_j) -\breve{f}(X_i,Y_j-\theta) \nonumber \\[-2mm]
	&=& n_1 n_2 \theta_h + n_1  \sum_{j=1}^{n_2} \{-g(Y_j) - \theta_h\} + n_1  \sum_{j=1}^{n_2} g(Y_j - \theta) \nonumber\\
	&& \qquad\quad +  \sum_{i=1}^{n_1} \sum_{j=1}^{n_2} \breve{f}(X_i,Y_j) - \sum_{i=1}^{n_1} \sum_{j=1}^{n_2} \breve{f}(X_i, Y_j-\theta) .
	\end{eqnarray}
	%WLOG, we may assume $n_1 \ge n_2$, otherwise substitute $h(X_i+\theta, Y_j)$ by $h(X_i, Y_j-\theta)$ in the second step above.
	By Lemma \ref{lem:rate_proj_1sample}, with probability smaller than $\gamma$,
	%\vspace*{-8pt} 
	\[
	n_1 | \sum_{j=1}^{n_2}  [-g(Y_j) - \theta_h]  |_\infty \ge K_1 D_n n_1 n_2^{1/2} \log^{1/2}(nd) = K_1  \Delta_2.
	\]
	Similarly, $n_1 | \sum_{j=1}^{n_2}  g(Y_j - \theta) |_\infty \ge K_2   \Delta_2$ with probability smaller than $\gamma$.
	By Lemma \ref{lem:E_2sample_inf_Ustat}, 
	%\vspace*{-5pt} 
	\[
	\E \big| \sum_{i=1}^{n_1} \sum_{j=1}^{n_2} \breve{f}(X_i,Y_j) \big|_\infty \le K_3  \Delta_1 \gamma.
	\]
	From Markov inequality, $\Prob \left( |\sum_{i=1}^{n_1} \sum_{j=1}^{n_2} \breve{f}(X_i,Y_j)|_\infty \ge K_3  \Delta_1  \right) \le \gamma$.\\ 
	Similarly, $| \sum_{i=1}^{n_1} \sum_{j=1}^{n_2} \breve{f} (X_i, Y_j-\theta) |_\infty \ge K_4  \Delta_1 $ with probability smaller than $\gamma$. 
	Therefore, 
	%\vspace*{-5pt} 
	\begin{align*}
	|\tilde{\Delta}|_\infty &\ge n_1 n_2 |\theta_h|_\infty - |n_1  \sum_{j=1}^{n_2} [-g(Y_j) - \theta_h] |_\infty - | n_1  \sum_{j=1}^{n_2} g(Y_j - \theta) |_\infty \\
	&\qquad \qquad \qquad \ - |\sum_{i=1}^{n_1} \sum_{j=1}^{n_2} \breve{f}(X_i,Y_j)|_\infty - |\sum_{i=1}^{n_1} \sum_{j=1}^{n_2} \breve{f} (X_i, Y_j-\theta)|_\infty\\
	& \ge n_1 n_2 |\theta_h|_\infty - (K_1 + K_2)  \Delta_2 - (K_3+ K_4)   \Delta_1
	%& \ge K_1  \Delta_3 + C_3(\ub) \Delta_4
	\end{align*}
	with probability no smaller than $1-4\gamma$. 
	
	\emph{(2) Bound $q_{\overline{T}_n^\sharp} (1-\alpha)$.	 }
	Recall $T_n^\sharp |X_1^n \sim N_d(\vzero, 4 \hat{\Gamma}_n)$, where $\hat{\Gamma}_n$ is defined in (\ref{eqn:cov_Tn_sharp_Gamma}).
	By the Bonferroni inequality, $\Prob \left( \overline{T}_n^\sharp > t |X_1^n \right) \le 2d \left[1-\Phi(t/ 2\overline{\psi}) \right]$, where $\overline{\psi}^2 = \max_{1 \le l \le d} \hat{\Gamma}_{n,ll} $. 	By the Cauchy-Schwarz inequality, for each $l=1,\dots,d$,
	%\vspace*{-5pt} 
	\begin{align*}\left\{ \sum_{i<j,k} h_l(X_i, X_j) h_l(X_i, X_k) \right\}^2 &\le 	\left\{ \sum_{i<j,k} h_l^2(X_i, X_j)  \right\} \left\{ \sum_{i<j,k} h_l^2(X_i, X_k)  \right\}\\
	&= \left\{ \sum_{i<j,k} h_l^2(X_i, X_j)  \right\}^2,
	\end{align*} 
	which implies 
	%\vspace*{-10pt} 
	\[
	\hat{\Gamma}_{n,ll} \le n^{-1} (n-1)^{-2}  \sum_{i=1}^n \sum_{i<j} (n-i) h_l^2(X_i, X_j) \le  (n-1)^{-2}  \sum_{i=1}^n \sum_{i<j} h_l^2(X_i, X_j).
	\]
	%We only need to deal with $\sum_{i=1}^n \sum_{i<j} h_l^2(X_i, X_j)$. 
	By Condition [A2] and [B2], $\E h_l^2(X_i, X_j) \le \E |h_l(X_i, X_j) - \E h_l(X_i, X_j)|^2 +  |\E h_l(X_i, X_j)|^2 \le  D_n +  |\theta_h|^2_\infty \ \vone (1 \le i \le m < j \le n)$ for any $1 \le l \le d$ and $1 \le i < j \le n$.
	From Lemma \ref{lem:cov_Tn_sharp_H1}, it shows that with probability grater than $1-\gamma$, 
	%\vspace*{-5pt} 
	\begin{align*}
	\overline{\psi}^2 &\le (n-1)^{-2} \Big\{ t^\diamond + \max_{1\le l \le d} \sum_{i=1}^n \sum_{i<j} \E h_l^2(X_i, X_j) \Big\} \\[-.5mm]
	& \lesssim D_n^2 + |\theta_h|^2_\infty \underbrace{ n^{-2}  \{ n_1 n_2 + n_1^{1 / 2} n_2 \log^{1 / 2}(nd) + n_2 \log^{3}(nd) \log (\gamma^{-1}) \} }_{\delta_n} .
	\end{align*}
	Therefore, $\overline{\psi} \le K_5 \left[ D_n + |\theta_h|_\infty \delta_n^{1/2} \right] $.
	In addition, for $\Phi^{-1} (1-{\alpha / (2d)}) = t_\alpha >0$ (as $d>1$), Gaussian tail bound (Chernoff method) shows $t_\alpha \le \left[ {2 \log (2d / \alpha)} \right]^{1/2}$. Then, with probability greater than $1-\gamma$,
	%\vspace*{-10pt} 
	\[
	q_{\overline{T}_n^\sharp} (1-\alpha) \le 2 \overline{\psi} \Phi^{-1} (1-{\alpha / (2d)}) \le K_6 n^{-3/2}  \left(\Delta_3 + |\theta_h|_\infty \left\{ n^3 \log({2d / \alpha}) \delta_n \right\}^{1/2} \right).
	\]
	Since $n_2 \ge n/2$ and $n_1 \gtrsim \log^{5/2} (nd)$, the rate of $\left\{ n^3 \log({2d / \alpha}) \delta_n \right\}^{1/2} \lesssim n_1 n_2$ leads to $q_{\overline{T}_n^\sharp | X_1^n} (1-\alpha) \le K_6 n^{-3/2} ( \Delta_3 + n_1 n_2 |\theta_h|_\infty )$.
	For bounded kernel $h$, a simpler bound of $\overline{\psi} \le K_5  D_n$ directly lead to $q_{\overline{T}_n^\sharp | X_1^n} (1-\alpha) \le K_6 n^{-3/2}  \Delta_3$ without assuming $n_1 \gtrsim \log^{5/2} (nd)$.
	
	\emph{(3) Bound $q_{\overline{T}_n^\xi} (1-\beta_n)$.}
	Note that $\overline{T}_n^\xi$ has the same distribution as $\overline{T}_n | H_0$. By the approximation in Theorem \ref{thm:gaussian_approx_rate} Step1, we have $\rho( \overline{T}_n^\xi, \overline{Z}) \le C_1 \varpi_n$ holds for $Z \sim N_d (0, 4\Gamma/3)$ with probability grater than $1-\gamma$.
	Since $||\overline{Z}||_{\psi_2} \le C_2(\ub) \log^{1/2} (d)$ by \cite[Lemma 2.2.2]{vandervaartwellner1996} and $\Prob(\overline{Z} > t) \le 2 \exp \left\{ -({t \over ||\overline{Z}||_{\psi_2}})^2 \right\} \le 2 \exp \left\{ -C_2(\ub) ^{-2} \log^{-1}(d) t^2 \right\}$. 
	Choosing $t = C_3(\ub) \log^{1/2} (\gamma^{-1}) \log^{1/2} (d)$ for large enough $C_3(\ub)$, we have  $\Prob(\overline{Z} > t) \le 2\gamma$. Hence, $\Prob (\overline{T}_n^\xi > t) \le \Prob (\overline{Z} > t) +  C_1 \varpi_n$. Let $\beta_n =  2\gamma +  C_1 \varpi_n$. Then with probability grater than $1-\gamma$, 
	%\vspace*{-8pt} 
	\begin{equation*}
	q_{\overline{T}_n^\xi} (1-\beta_n) \le  C_3(\ub) \log^{1/2} (\gamma^{-1}) \log^{1/2} (d) = C_3(\ub) n^{-3/2} \Delta_4.
	\end{equation*}
	
	Combining Step (1)-(3), when $m(n-m) |\theta_h|_\infty > 2 (K_3+K_4)\Delta_1 + 2(K_1+K_2)\Delta_2 + K_6\Delta_3 + C_3(\ub) \Delta_4$, 
	%\vspace*{-15pt} 
	\[
	|\tilde{\Delta}|_\infty \ge {1 \over 2} n^{3/2}  \left\{q_{\overline{T}_n^\sharp} (1-\alpha)  + q_{\overline{T}_n^\xi} (1-\beta_n) \right\}
	\] 
	with probability no smaller than $1- 6 \gamma$.
	That is, the Type II error is less than $6 \gamma + \beta_n = 8 \gamma +  C_1 \varpi_n$, where we set $\zeta = 8\gamma$.
	As $(\Delta_1 \vee \Delta_2) \lesssim \Delta_3$, the conclusion of Theorem \ref{thm:power_signal_rate} immediately follows for some large enough $K \ge 2 \sum_{i=1}^6 K_i$.
	
\end{proof}

\begin{proof}[Proof of Lemma~\ref{lem:power_signal_rate_multiple}]
	Let 
	\[
	\tilde{\Delta} = \sum_{1 \le i < j \le n} h(X_i, X_j) - h(\xi_i, \xi_j) = \sum_{ k < k'} \tilde{\Delta}^{(k,k')},  
	\]
	where
	\[
	\tilde{\Delta}^{(k,k')} = \sum_{\begin{subarray}{c} m_k < i \le m_{k+1} \\ m_{k'} < j \le m_{k'+1} \end{subarray}} h(X_i, X_j) - h(\xi_i, \xi_j).
	\]
	Similar to the proof of Theorem~\ref{thm:power_signal_rate}, we shall quantify $|\tilde{\Delta}|_\infty$, $q_{\overline{T}_n^\sharp} (1-\alpha)$ and $q_{\overline{T}_n^\xi} (1-\beta_n)$ to conclude that the Type II error is bounded when $|\delta|_\infty$ satisfies (\ref{eqn:signal_rate_multiple}).
	
	\emph{(1) Quantify $|\tilde{\Delta}|_\infty$.}  
	\begin{eqnarray}
	\tilde{\Delta}^{(k,k')} = s_{k} s_{k'} \delta^{(k,k')} +&& s_{k}  \sum_{\mathclap {m_{k'} < j \le m_{k'+1}}}  \{-g(X_j - \theta^{(k)}) - \delta^{(k,k')} \} +  s_{k}  \sum_{\mathclap {m_{k'} < j \le m_{k'+1}}}  g(X_j - (\theta^{(k')} - \theta^{(k)}) ) \nonumber\\
	+&&  \sum_{\mathclap {\begin{subarray}{c} m_k < i \le m_{k+1} \\ m_{k'} < j \le m_{k'+1} \end{subarray}}} \breve{f}(X_i,X_j) - \sum_{\mathclap {\begin{subarray}{c} m_k < i \le m_{k+1} \\ m_{k'} < j \le m_{k'+1} \end{subarray}}} \breve{f}(X_i, X_j- \theta^{(k)}) \nonumber.
	\end{eqnarray}
	Applying the results in Step (1) to $\sum_{ k < k'} \tilde{\Delta}^{(k,k')}$, we have each of the following inequalities satisfied with probability greater than $1 - \gamma$:
	\begin{align*}
	&|\sum_{ k < k'} \quad s_{k}  \sum_{\mathclap {m_{k'} < j \le m_{k'+1}}}  \{-g(X_j - \theta^{(k)}) - \delta^{(k,k')} \}|_\infty \\
	&\quad \le \sum_{ k < k'}  K_1 D_n (s_k s_{k'})^{1/2} n^{1/2} \log^{1/2}(nd)  \le K_1 \nu^2 D_n n^{3/2} \log^{1/2}(nd);\\
	&|\sum_{ k < k'} \quad s_{k}  \sum_{\mathclap {m_{k'} < j \le m_{k'+1}}}  g(X_j - (\theta^{(k')} - \theta^{(k)}) )|_\infty \\
	&\quad \le \sum_{ k < k'} K_2 D_n (s_k s_{k'})^{1/2} n^{1/2} \log^{1/2}(nd) \le K_2 \nu^2 D_n n^{3/2} \log^{1/2}(nd);\\
	&|\sum_{ k < k'} \quad\quad \sum_{\mathclap {\begin{subarray}{c} m_k < i \le m_{k+1} \\ m_{k'} < j \le m_{k'+1} \end{subarray}}} \breve{f}(X_i,X_j)|_\infty + |\sum_{ k < k'} \quad\quad \sum_{\mathclap {\begin{subarray}{c} m_k < i \le m_{k+1} \\ m_{k'} < j \le m_{k'+1} \end{subarray}}} \breve{f}(X_i, X_j- \theta^{(k)})|_\infty  \\
	&\quad \le \sum_{ k < k'} K_3 \gamma^{-1} D_n (s_k s_{k'})^{1/2} \log d \le K_3 \nu^2 D_n n^{3/2} \log^{1/2}(nd).
	\end{align*}
	Combining all pairs of $(k,k')$ for $0\le k < k' \le \nu$, it follows
	\begin{align*}
	|\tilde{\Delta}|_\infty = |\sum_{ k < k'} \tilde{\Delta}^{(k,k')}|_\infty &\ge |\sum_{ k < k'}  s_{k} s_{k'} \delta^{(k,k')}|_\infty  - (K_1 + K_2+K_3)  \nu^2 D_n n^{3/2} \log^{1/2}(nd)
	\end{align*}
	with probability greater than $1 - 3 \gamma$.
	
	\emph{(2) Bound $q_{\overline{T}_n^\sharp} (1-\alpha)$.}
	Under $H_1^{'}$, $T_n^\sharp |X_1^n \sim N_d(\vzero, 4 \hat{\Gamma}_n)$, where $\hat{\Gamma}_n$ is defined the same as in (\ref{eqn:cov_Tn_sharp_Gamma}).
	To control the magnitude of $|\sum_{1 \le i<j \le n} h_l^2(X_i, X_j)|$, note that 
	\[
	\sum_{1 \le i<j \le n} = \sum_{ {\begin{subarray}{c} m_k < i \le m_{k+1} \\ m_{k'} < j \le m_{k'+1}\\0 \le k<k' \le \nu \end{subarray}}} + \sum_{ {\begin{subarray}{c} m_k < i < j \le m_{k+1} \\ 0\le k \le \nu \end{subarray}}}.
	\]
	So we can modify Lemma~\ref{lem:cov_Tn_sharp_H1} from the following two cases.
	For the case of $\calC_{k,k'} = \{ m_k < i \le m_{k+1} \le m_{k'} < j \le m_{k'+1} \}$ where $i,j$ are in different segments, $\E h_l^2 (X_i, X_j) \le D_n + |\delta^{(k,k')}_l|^2 $, based on modified Lemma~\ref{lem:cov_Tn_sharp_H1} we have 
	\begin{align*}
	\Prob\Big( &\max_{1\le l \le d} |\sum_{\calC_{k,k'}}  h_l^2 (X_i, X_j)  -  \E h_l^2 (X_i, X_j)|  \ge \\
	&\qquad\qquad \max_{k<k'} K_4 (D_n^2 + |\delta^{(k,k')}|_\infty^2) (s_k s_{k'})^{1/2} n^{1/2} \log^{1/2} (n d) \Big) \le \gamma.
	\end{align*}		
	For the case of $\calC_{k} = \{ m_k < i  < j \le m_{k+1} \}$ where $i,j$ are in the same segments, $|\E h_l(X_i, X_j)|^2 \le D_n$ and
	\begin{align*}
	\Prob\left( \max_{1\le l \le d} |\sum_{\calC_{k}}  h_l^2 (X_i, X_j)  -  \E h_l^2 (X_i, X_j)| \ge K_5 D_n^2 n^{3/2}  \log^{1/2} (n d) \right) \le \gamma.
	\end{align*}
	Take  $t^\diamond = D_n^2 n^{3/2}  \log^{1/2} (nd) + \max_{k<k'} (s_k s_{k'})^{1/2} |\delta^{(k,k')}|_\infty^2 n^{1/2}  \log^{1/2} (nd) $. Then, adding all $\calC_{k}$ and $\calC_{k,k'}$ together, 
	\begin{align*}
	\overline{\psi}^2 &= \max_{1 \le l \le d} \hat{\Gamma}_{n,ll} \\
	&\le (n-1)^{-2} K_6 \Big\{ t^\diamond + \max_{1\le l \le d} \sum_{i=1}^n \sum_{i<j} \E h_l^2(X_i, X_j) \Big\} \\[-.5mm]
	& \le K_6 \left\{D_n^2 + n^{-3/2} \log^{1/2}(nd) \max_{k<k'} (s_k s_{k'})^{1/2} |\delta^{(k,k')}|_\infty^2 + n^{-2} \sum_{ k < k'} s_k s_{k'} |\delta^{(k,k')}|_\infty^2 \right\} 
	\end{align*}
	holds with probability greater than $1-(\nu+1)( \nu+2)\gamma/2$.
	Therefore, $q_{\overline{T}_n^\sharp} (1-\alpha) \le K_7 \overline{\psi} t_\alpha$, where $t_\alpha = \Phi^{-1} (1-{\alpha / (2d)}) \le 2\log^{1/2} (nd / \alpha)$ and
	\[
	\overline{\psi} \le K_6 \left\{ D_n +  n^{-3/4} \log^{1/4}(nd) \max_{k<k'} (s_k s_{k'})^{1/4} |\delta^{(k,k')}|_\infty +   n^{-1} \sum_{ k < k'} (s_k s_{k'})^{1/2} |\delta^{(k,k')}|_\infty  \right\}.
	\] 
	
	\emph{(3) Bound $q_{\overline{T}_n^\xi} (1-\beta_n)$.}
	Since $\overline{T}_n^\xi$ does not depend on $H_1^{'}$, it obeys the same bound  
	\begin{equation*}
	q_{\overline{T}_n^\xi} (1-\beta_n) \le  C(\ub) \log^{1/2} (\gamma^{-1}) \log^{1/2} (d) = C(\ub) \log^{1/2}(\gamma^{-1}) \log^{1/2}(d)
	\end{equation*}
	with probability grater than $1-\gamma$  for $\beta_n =  2\gamma +  C_1 \varpi_n$.
	
	Combining Step (1)-(3), when 
	\begin{align*}
	&|\sum_{ k < k'}  s_{k} s_{k'} \delta^{(k,k')}|_\infty  > K_0 \nu^2 D_n n^{3/2} \log^{1/2}(nd/\alpha) + C(\ub) n^{3/2} \log^{1/2}(\gamma^{-1}) \log^{1/2}(d)  \\
	&+ K_0' \log^{1/2}(nd/\alpha) \left\{	n^{3/4} \log^{1/4}(nd) \max_{k<k'} (s_k s_{k'})^{1/4} |\delta^{(k,k')}|_\infty +   n^{1/2}  \sum_{ k < k'} (s_k s_{k'})^{1/2} |\delta^{(k,k')}|_\infty  \right\},
	\end{align*}
	the Type II error will be smaller than $\beta_n + \{4+(\nu+1)(\nu+2)/2\}\gamma$ for $\beta_n =  2\gamma +  C_1 \varpi_n$. Substitute $\gamma$ by $\{4+(\nu+1)(\nu+2)/2\}^{-1} \zeta$, we reach the conclusion of theorem.  
\end{proof}

\subsection{Proof of lemmas in theorems}

\begin{lem}[Bounding $|\hat{\Gamma}_n - \Gamma/3|_\infty$ under $H_0$.]
	\label{lem:cov_Tn_sharp}
	Suppose all the conditions in Theorem \ref{thm:gaussian_approx_rate} hold. Let $\Gamma = \Cov(g(X_1))$ and $\hat{\Gamma}_n$ be defined as in (\ref{eqn:cov_Tn_sharp_Gamma}). Then  with probability greater than $1-\gamma$,
	\begin{align*}
	|\hat{\Gamma}_{n} - \Gamma/3|_\infty  \le K_0 \left(D_n^2 \log (nd) \over n \right)^{1/2}.
	\end{align*}
\end{lem}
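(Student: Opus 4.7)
The plan is to bound $|\hat\Gamma_n - \Gamma/3|_\infty$ by splitting it into a bias $|\E\hat\Gamma_n-\Gamma/3|_\infty$ and a deviation $|\hat\Gamma_n-\E\hat\Gamma_n|_\infty$. For the bias, I would compute $\E\hat\Gamma_n$ exactly by partitioning the triples $(i,j,k)$ with $j,k>i$ into the diagonal $\{j=k\}$ (which contains $\binom{n}{2}$ triples and contributes $M:=\E[h(X_1,X_2)h(X_1,X_2)^\top]$) and the off-diagonal $\{j\neq k\}$ (which contains $n(n-1)(n-2)/3$ triples and, by conditional independence of $X_j,X_k$ given $X_i$, contributes $\Gamma$). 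This yields $\E\hat\Gamma_n=\tfrac{n-2}{3(n-1)}\Gamma+\tfrac{1}{2(n-1)}M$, so $|\E\hat\Gamma_n-\Gamma/3|_\infty\lesssim(|\Gamma|_\infty+|M|_\infty)/n$. Under (A2) and Jensen's inequality, both $|\Gamma|_\infty$ and $|M|_\infty$ are $O(D_n^{2/3})$, giving bias $O(D_n/n)$, negligible compared to the target.

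For the deviation, fix a coordinate pair $(a,b)\in\{1,\dots,d\}^2$ and again split into diagonal and off-diagonal parts. The diagonal part is a rescaled order-$2$ U-statistic at scale $1/n$; since $\|h_a h_b\|_{\psi_{1/2}}\lesssim D_n^2$ (by (A3)) and $\E(h_a h_b)^2\le D_n^2$ (by (A2)), a sub-exponential Bernstein bound yields a per-entry fluctuation of order $D_n^2\log(nd)/n^2$, far below the target.

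The main term is the off-diagonal piece. Conditioning on $X_i$ gives $\E[h_a(X_i,X_j)h_b(X_i,X_k)\mid X_i]=g_a(X_i)g_b(X_i)$ for $j\neq k$, so the leading piece in the Hoeffding decomposition (with respect to $X_1^n$) of the off-diagonal sum is the linear statistic $L_{ab}=\frac{1}{n(n-1)^2}\sum_{i=1}^n (n-i)(n-i-1)[g_a(X_i)g_b(X_i)-\Gamma_{ab}]$, whose weights sum to $\tfrac{n-2}{3(n-1)}\to\tfrac13$ --- precisely producing the $\Gamma/3$ centering. Since $g_a(X)g_b(X)$ is sub-$\psi_{1/2}$ with Orlicz norm $\lesssim D_n^2$ and second moment $\le D_n^2$, a sub-exponential Bernstein inequality gives $|L_{ab}|\lesssim D_n\sqrt{\log(nd)/n}+D_n^2\log^2(nd)/n$ with probability at least $1-(nd)^{-K}$; the first term dominates under $n\ge\log^7(nd)$ and matches the target. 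The residual degenerate Hoeffding pieces of orders $2$ and $3$ are controlled coordinatewise via maximal inequalities for degenerate U-statistics in \cite{chenkato2017a} (the same tool used in the proof of Theorem~\ref{thm:gaussian_approx_rate}), contributing at order $D_n^2\log^c(nd)/n$ for some $c\ge1$, dominated by the target.

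A union bound over the $d^2$ coordinate pairs absorbs a $\log(d^2)$ factor into $\log(nd)$, giving the claimed high-probability statement. The main obstacle is that the off-diagonal sum is asymmetric in $i$ versus $(j,k)$, so off-the-shelf concentration for symmetric U-statistics does not apply directly; one must tailor the Hoeffding decomposition to this pivoted structure (or pre-symmetrize over the three cyclic orderings of $(i,j,k)$ and bound the residual). Combined with the sub-$\psi_{1/2}$ tails of $h_a h_b$ --- the weakest class permitted by (A3) --- this makes the Bernstein step at the linear level the delicate part, since one must ensure the $\psi_{1/2}$ correction $D_n^2\log^2(nd)/n$ is dominated by the Gaussian term $D_n\sqrt{\log(nd)/n}$ under only the mild assumptions (A2)--(A3).
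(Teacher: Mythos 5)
Your decomposition into bias plus deviation, the exact computation of $\E\hat{\Gamma}_n=\tfrac{n-2}{3(n-1)}\Gamma+\tfrac{1}{2(n-1)}\E[h(X_1,X_2)h(X_1,X_2)^\top]$, and the diagonal/off-diagonal split over the triples $(i,j,k)$ are exactly the skeleton of the paper's proof. Where you diverge is the concentration step for the off-diagonal sum: the paper symmetrizes the pivoted kernel $H(x_1,x_2,x_3)=h(x_1,x_2)h(x_1,x_3)^\top$ over permutations (with an indicator killing the unwanted orderings), so that the sum becomes a genuine non-degenerate order-three $U$-statistic, and then applies a single ready-made maximal concentration inequality (Lemma E.1 of \cite{chen2018gaussian}, with the expectation term controlled by Lemma 8 of \cite{cck2014b}); no Hoeffding decomposition is performed. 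You instead propose to decompose by hand into a linear part plus degenerate remainders and apply Bernstein-type bounds piece by piece. Both routes deliver the same rate $D_n\sqrt{\log(nd)/n}$, with the $\psi_{1/2}$ corrections absorbed under the standing assumption $n\gtrsim D_n^2\log^7(nd)$; the paper's route is shorter precisely because the pre-symmetrization (which you correctly flag as one of the two ways around the $i$ versus $(j,k)$ asymmetry) lets the cited lemma do all the work at once.

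There is one concrete imprecision in your version that needs repair. The first-order part of the Hoeffding decomposition of $\sum_{i<j\neq k}h_a(X_i,X_j)h_b(X_i,X_k)$ is not just the pivot projection $g_a(X_i)g_b(X_i)$: the kernel also has nonzero first-order projections onto its second and third arguments, namely $x\mapsto\E[h_a(X,x)\,g_b(X)]$ and $x\mapsto\E[g_a(X)\,h_b(X,x)]$, each with mean $\Gamma_{ab}$. After subtracting only your $L_{ab}$, the remainder is therefore \emph{not} degenerate, and the maximal inequalities for degenerate $U$-statistics you invoke would not apply to it as stated. The fix is routine --- the two omitted projections are again i.i.d.\ sums of mean-zero variables whose second moments and $\psi_1$-norms are controlled by (A2)--(A3) via Cauchy--Schwarz and Jensen, so they contribute at the same order $D_n\sqrt{\log(nd)/n}$ and the claimed rate is unchanged --- but the linear part must be completed before the degenerate-remainder machinery can legitimately be brought in.
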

\begin{proof}[Proof of Lemma \ref{lem:cov_Tn_sharp}]
	Note $\Gamma = \Cov( \E [ h(X,X_1)|X] ) = \E[ h(X_1, X_2) h(X_1,X_3)^T ]$ and let $\Gamma_2 = \E[ h(X_1, X_2) h(X_1,X_2)^T ]$. Then
	\begin{align*}
	\E \hat{\Gamma}_n & = {1 \over n(n-1)^2} \sum_{i=1}^n (n-i)(n-i-1) \Gamma +  {1 \over n(n-1)^2} \sum_{i=1}^n (n-i) \Gamma_2 \\
	&= {n-2 \over 3 (n-1)} \Gamma + {1 \over 2 (n-1)} \Gamma_2 	.
	\end{align*} 
	Note that, the summation in $\hat{\Gamma}_n$ can split into two parts
	\begin{align*}
	\sum_{i=1}^n \sum_{j,k>i}  = \sum_{i=1}^n \sum_{j \neq k >i} + \sum_{i=1}^n \sum_{j = k >i}.
	\end{align*}
	In Steps 1 and 2 below, we will deal with $\hat{\Gamma}_{n1} = {1 \over n(n-1)^2}  \sum_{i=1}^n \sum_{j \neq k >i} h(X_i, X_j) h(X_i, X_k)^T$ and  $\hat{\Gamma}_{n2} = {1 \over n(n-1)^2}  \sum_{i=1}^n \sum_{j = k >i} h(X_i, X_j) h(X_i, X_k)^T$ respectively, where $\hat{\Gamma}_n= \hat{\Gamma}_{n1} + \hat{\Gamma}_{n2}$. 
	Then conclusion will be made in Step 3.
	
	\underline{Step 1: Term $\hat{\Gamma}_{n1} = {1 \over n(n-1)^2}  \sum_{i=1}^n \sum_{j \neq k >i} h(X_i, X_j) h(X_i, X_k)^T$.}
	Define $H( x_1, x_2, x_3) $ to be $ h(x_1, x_2) h(x_1,x_3)^T$. To symmetrize $H$, let $H' ( X_i, X_j, X_k) = \sum_{\pi_3} \tilde{H} ( X_{\pi_3(i)}, X_{\pi_3(j)}, X_{\pi_3(k)})$, where 
	\begin{displaymath}
	\tilde{H}( X_i, X_j, X_k) = \left\{ \begin{array}{ll}
	H( X_i, X_j, X_k), & \textrm{ if $i < j \ne k$,}\\
	\vzero,  & \textrm{ otherwise}
	\end{array} \right. ,
	\end{displaymath}
	and $\pi_3$ is a permutation of $\{i,j,k\}$. Then,
	\begin{align*}
	\hat{\Gamma}_{n1} &= {1 \over n(n-1)^2} \sum_{i < j \neq k} H( X_i, X_j, X_k) = {1 \over n(n-1)^2} \sum_{i \ne j \ne k} \tilde{H}( X_i, X_j, X_k) \\
	&= {1 \over 6n(n-1)^2} \sum_{i \ne j \ne k} H' ( X_i, X_j, X_k) 
	\end{align*}
	is a $U$-statistics of order 3 and $\E \hat{\Gamma}_{n1} = {n-2 \over 3(n-1)}\Gamma$. 
	Let 
	\[
	W_n = {(n-3)! \over n!} \sum_{i \ne j \ne k} H' ( X_i, X_j, X_k) = {6(n-1) \over n-2} \hat{\Gamma}_{n1} .
	\]
	Apply Lemma E.1 in \cite{chen2018gaussian} to $H'$ for $\alpha = 1/2, \eta = 1$ and $\delta = 1/2$, 
	\begin{equation}
	\label{eqn:lem_maxima_centered_U}
	\Prob \left( {n \over 3} |W_n - \E W_n|_\infty \ge 2 \E Z_1 +t \right) 
	\le \exp\left(- {t^2 \over 3 \overline{\zeta}_n^2} \right) + 3 \exp \left[- \left( {t \over K_1 ||M||_{\psi_{1/2}} } \right)^{1/2} \right],
	\end{equation}
	where
	\begin{align*}
	&\E W_n = \E H'(X_1, X_2, X_3) = 2\Gamma,\\
	&Z_1 = \max_{1 \le m_1, m_2 \le d} \left| \sum_{i=0}^{[{n \over 3}] - 1} \left[\overline{H'}_{m_1, m_2} (X_{3i+1}^{3i+3}) - \E \overline{H'}_{m_1, m_2} \right] \right|,\\
	&\overline{\zeta}_n^2 =  \max_{1 \le m_1, m_2 \le d} \sum_{i=0}^{[{n \over 2}] - 1} \E H_{m_1, m_2}^{'2} (X_{3i+1}^{3i+3}), \\
	&M = \max_{1 \le m_1, m_2 \le d} \max_{0 \le i \le [{n \over 3}] - 1} \left| H'_{m_1, m_2} (X_{3i+1}^{3i+3}) \right|.
	\end{align*}	
	and $\overline{H'}_{m_1, m_2} (x_1, x_2, x_3) = H'_{m_1, m_2} (x_1, x_2, x_3) \vone_{ \{\max_{ m_1, m_2} |H'_{m_1, m_2} (x_1, x_2, X_3)| \le \tau \} }$ for $\tau = 8\E M$.
	By Cauchy-Schwarz and Condition (A2), 
	\begin{align*}
	\E H_{m_1, m_2}^{'2} (X_{3i+1}^{3i+3}) &\le 2\E H_{m_1, m_2}^{2} (X_{3i+1}^{3i+3}) \\
	&\le \left( \E h_{m1}^4 (X_{3i+1}, X_{3i+2}) \right)^{1/2} \left( \E h_{m2}^4 (X_{3i+1}, X_{3i+3}) \right)^{1/2} \le D_n^2.
	\end{align*}
	So $\overline{\zeta}_n \le n^{1/2} D_n$. From (i) \cite[Lemma 2.2.2]{vandervaartwellner1996}, (ii) the fact of $||X^2||_{\psi_{1/2}} = ||X||^2_{\psi_1}$ and (iii) Condition (A3), we obtain
	\begin{align*}
	||M||_{\psi_{1/2}} &= || \max_{ 1\le m_1, m_2 \le d} \max_{0 \le i \le {n \over 3}-1} h_{m_1} (X_{3i+1}, X_{3i+2}) h_{m_2} (X_{3i+1}, X_{3i+3}) ||_{\psi_{1/2}} \\
	&\le_{(i)} K_2 \log^2(nd) \max_{ 1\le m_1, m_2 \le d} \max_{0 \le i \le {n \over 3}-1} || h_{m_1} (X_{3i+1}, X_{3i+2}) h_{m_2} (X_{3i+1}, X_{3i+3}) ||_{\psi_{1/2}} \\
	&\le K_2' \log^2(nd) \max_{ 1\le m_1 \le d} \max_{0 \le i \le {n \over 3}-1} || h_{m_1}^2 (X_{3i+1}, X_{3i+2})||_{\psi_{1/2}} \\
	&=_{(ii)} K_2' \log^2(nd) \max_{ 1\le m_1 \le d} \max_{0 \le i \le {n \over 3}-1} || h_{m_1} (X_{3i+1}, X_{3i+2})||_{\psi_1}^2 \\
	&\le_{(iii)} K_2' \log^2(nd) D_n^2.
	\end{align*}
	By  \cite[Lemma 8]{cck2014b},
	\[
	\E Z_1 \le K_3 \left\{ \sqrt{\log d} \ \overline{\zeta}_n + \log d \ ||M||_{\psi_{1/2}} \right\} \le K_4 [ n \log(nd) D_n^2 ]^{1/2}.
	\]
	Therefore, (\ref{eqn:lem_maxima_centered_U}) leads to
	\begin{align*}
	\Prob \big( |\hat{\Gamma}_{n1} - \E \hat{\Gamma}_{n1}|_\infty \ge & 4 K_4  n^{-1/2} D_n \log^{1/2}(nd)+ t \big) 
	\\
	&\le  \exp\left(- {n t^2 \over 3 D_n^2} \right) + 3 \exp \left[- {\sqrt{nt} \over {K_1K_2}^{1/2} \log(nd) D_n }  \right].
	\end{align*}
	Recall $K \log(nd) \ge \log (1/\gamma) \ge 1$ and $n \gtrsim D_n^2 \log^7(nd)$.	Choose 
	\[
	t^* = K_5 \sqrt{D_n^2 \log(nd)  \over n}
	\]
	for some large enough $K_5>0$. Then,
	\[
	\Prob \left( |\hat{\Gamma}_{n1} - \E \hat{\Gamma}_{n1}|_\infty \ge t^* \right) 
	\le  \gamma^{K_5^2 \over 3K} + 3 \gamma^{K_5^{1/2} \over K K_1 K_2^{1/2}} \le \gamma/2.
	\]
	
	\underline{Step 2: Term $\hat{\Gamma}_{n2} = {1 \over n(n-1)^2}  \sum_{i=1}^n \sum_{j = k >i} h(X_i, X_j) h(X_i, X_k)^T$.}
	Let $H( x_1, x_2) $ be defined as $ h(x_1, x_2) h(x_1,x_2)^T$.  Denote $W_n' = {(n-2)! \over n!} \sum_{i \ne j} H(X_i, X_j) = 2(n-1) \hat{\Gamma}_{n2}$.
	By Lemma E.1 in \cite{chen2018gaussian}, 
	\[
	\Prob \left( {n \over 2} |W_n' - \E W_n'|_\infty \ge 2 \E Z'_1 +t \right) 
	\le \exp\left(- {t^2 \over 3 \overline{\zeta'}_n^2} \right) + 3 \exp \left[- \left( {t \over K_6 ||M'||_{\psi_{1/2}} } \right)^{1/2} \right]
	\]
	where
	\begin{align*}
	&\E W_n' = \E [H(X_1,X_2)] =  \Gamma_2,\\
	&Z'_1 = \max_{1 \le m_1, m_2 \le d} \left| \sum_{i=0}^{[{n \over 2}] - 1} \left[\overline{H}_{m_1, m_2} (X_{2i+1}^{2i+2}) - \E \overline{H}_{m_1, m_2} \right] \right|,\\
	&\overline{\zeta'}_n^2 =  \max_{1 \le m_1, m_2 \le d} \sum_{i=0}^{[{n \over 2}] - 1} \E H_{m_1, m_2}^2 (X_{2i+1}^{2i+2}), \\
	&M' = \max_{1 \le m_1, m_2 \le d} \max_{0 \le i \le [{n \over 2}] - 1} \left| H_{m_1, m_2} (X_{2i+1}^{2i+2}) \right|.
	\end{align*}	
	and $\overline{H}_{m_1, m_2} (x_1, x_2) = H_{m_1, m_2} (x_1, x_2) \vone_{ \{\max_{ m_1, m_2} |H_{m_1, m_2} (x_1, x_2)| \le \tau \} }$ for $\tau = 8\E M'$.
	Similarly, 
	\[
	\E H_{m_1, m_2}^2 (X_{2i+1}^{2i+2}) \le \left( \E h_{m1}^4 (X_{2i+1}^{2i+2}) \right)^{1/2} \left( \E h_{m2}^4 (X_{2i+1}^{2i+2}) \right)^{1/2} \le D_n^2.
	\]
	So $\overline{\zeta'}_n \le n^{1/2} D_n$. In addition,
	\begin{align*}
	||M'||_{\psi_{1/2}} &= || \max_{ 1\le m_1, m_2 \le d} \max_{0 \le i \le {n \over 2}-1} h_{m_1} (X_{2i+1}^{2i+2}) h_{m_2} (X_{2i+1}^{2i+2}) ||_{\psi_{1/2}} \\
	&\le K_7 \log^2(nd) \max_{ 1\le m_1 \le d} \max_{0 \le i \le {n \over 2}-1} || h_{m_1} (X_{2i+1}, X_{2i+2})||_{\psi_1}^2 \\
	&\le K_7 \log^2(nd) D_n^2.
	\end{align*}
	Then by \cite[Lemma 8]{cck2014b}, we have $\E Z'_1 \le K_8 [ n \log(nd) D_n^2 ]^{1/2}$.
	Similar to Step 1, taking $t'^* = K_9 \sqrt{D_n^2 \log(nd) \over n}$ for some large enough $K_9>0$, we end up with 
	\[
	\Prob \left( |W_n' - \E W_n'|_\infty \ge t'^* \right)  \le \gamma/2,
	\]
	i.e. $\Prob \left( |\hat{\Gamma}_{n2} - \Gamma_2|_\infty \ge (n-1)^{-1} \cdot t'^* \right) \le \gamma/2$.
	
	\underline{Step 3: Approximating $\hat{\Gamma}_{n}$ to $\Gamma/3$.}	By Cauchy-Schwarz inequality and Condition (A2),
	\begin{align*}
	|\Gamma|_\infty &= \max_{1 \le m_1, m_2 \le d} | \E  h_{m1} (X_1, X_2) \E  h_{m2} (X_1, X_3)| \\
	& \le \max_{ 1\le m_1 \le d} | \E  h_{m1}^2 (X_1, X_2)| \le \max_{ 1\le m_1 \le d} | \E  h_{m1}^4 (X_1, X_2)|^{1/2} \le D_n,\\
	|\Gamma_2|_\infty &= \max_{1 \le m_1, m_2 \le d} | \E  h_{m1} (X_1, X_2) \E  h_{m2} (X_1, X_2)|\\
	&\le  \max_{ 1 \le m_1 \le d} | \E  h_{m1}^2 (X_1, X_2)| \le D_n.
	\end{align*}
	Notice that 
	\[
	|\hat{\Gamma}_{n} - \Gamma/3|_\infty \le |\hat{\Gamma}_{n} - \E \hat{\Gamma}_{n}|_\infty + |\E\hat{\Gamma}_{n} - \Gamma/3|_\infty, 
	\]
	where
	\[
	|\E\hat{\Gamma}_{n} - \Gamma/3|_\infty \le {1 \over 3(n-1)}	|\Gamma|_\infty + {1 \over 2 (n-1)} |\Gamma_2|_\infty \le n^{-1} D_n \le K_{10} \sqrt{D_n^2 \log(nd) \over n}.
	\]
	Combine Step 1 and 2 and take $t_0 = K_0 \sqrt{D_n^2 \log(nd) \over n}$ for some $K_0>K_{10}+K_9+K_5$ large enough, we have
	\[
	\Prob \left(  |\hat{\Gamma}_{n} - \Gamma/3|_\infty \ge t_0 \right) \le \gamma.
	\]
	
\end{proof}

\begin{lem}[Bounding $\max_{1\le l \le d} |\sum_{i=1}^n \sum_{i<j} h_l^2(X_i, X_j) - \E h_l^2(X_i, X_j) |$ under $H_1$.]
	\label{lem:cov_Tn_sharp_H1}
	Suppose all the conditions in Theorem \ref{thm:gaussian_approx_rate} and Theorem \ref{thm:power_signal_rate} hold.
	Let  $\gamma \in (0, e^{-1})$ such that $\log (\gamma^{-1}) \le K \log (nd)$ and suppose $n_1 = m \le n-m=n_2$. Then the following holds with probability greater than $1-\gamma$ for some large enough constant $K^\diamond$
	\[
	\max_{1\le l \le d} |\sum_{i=1}^n \sum_{i<j} h_l^2(X_i, X_j) - \E h_l^2(X_i, X_j) |\le K^\diamond t^\diamond,
	\]
	where $t^\diamond =  D_n^2 n^{3 \over 2} \log^{1 \over 2}(nd) + |\theta_h|^2_\infty [ n_1^{1 \over 2} n_2 \log^{1 \over 2}(nd) + n_2 \log^{3}(nd) \log (\gamma^{-1}) ]$.
\end{lem}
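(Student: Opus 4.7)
The plan is to exploit the change-point structure of $H_1$ and split the sum $\sum_{i<j} h_l^2(X_i,X_j)$ according to whether $i,j$ lie on the same or opposite sides of the change point $m = n_1$. The two same-side pieces reduce, by shift-invariance, to one-sample $U$-statistics of order $2$ with scalar kernel $h_l^2$ on i.i.d.\ $F$-samples and are handled by the technology already developed in Lemma~\ref{lem:cov_Tn_sharp}. The mixed (cross) piece is where the $|\theta_h|_\infty^2$ terms in $t^\diamond$ arise, and requires a two-sample concentration argument.

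Concretely, write $n_1 = m \le n-m = n_2$ and decompose
\[
\sum_{i<j} h_l^2(X_i,X_j) \ = \ S_{1,l}+S_{2,l}+S_{3,l},
\]
where $S_{1,l},S_{2,l}$ sum over $1\le i<j\le n_1$ and $n_1<i<j\le n$ respectively and $S_{3,l} = \sum_{i=1}^{n_1}\sum_{j=1}^{n_2} h_l^2(X_i,Y_j)$ is the cross piece. By (B1), $h_l^2(\xi_i+\theta,\xi_j+\theta)=h_l^2(\xi_i,\xi_j)$, so $S_{1,l}$ and $S_{2,l}$ are (in distribution) one-sample $U$-statistics of order $2$ on $n_1$ and $n_2$ i.i.d.\ $F$-samples with scalar kernel $h_l^2$. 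I would then rerun Step~2 of the proof of Lemma~\ref{lem:cov_Tn_sharp} with the matrix kernel $h_l h_l^T$ replaced by $h_l^2$: the inputs $\|h_l^2\|_{\psi_{1/2}} = \|h_l\|_{\psi_1}^2 \le D_n^2$ and $\E h_l^4 \le D_n^2$ (from (A2)--(A3)) feed verbatim into Lemma~E.1 of~\cite{chen2018gaussian} and Lemma~8 of~\cite{cck2014b}, yielding $\max_l |S_{k,l}-\E S_{k,l}| \lesssim D_n n_k^{3/2}\log^{1/2}(nd)$ on an event of probability at least $1-\gamma/3$ for each $k\in\{1,2\}$, which is absorbed into the $D_n^2 n^{3/2}\log^{1/2}(nd)$ part of $t^\diamond$.

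For the cross piece $S_{3,l}$, I would recenter by writing $\tilde h_l(x,y) = h_l(x,y)-\theta_{h,l}$, which is mean zero under $X\sim F,\,Y\sim G$ and satisfies $\|\tilde h_l\|_{\psi_1}\le D_n$ and $\E\tilde h_l^{\,4}\le D_n^2$ by (B2)--(B3). Expanding $h_l^2 = \tilde h_l^{\,2} + 2\theta_{h,l}\tilde h_l + \theta_{h,l}^{\,2}$ and using $\E\tilde h_l = 0$ gives
\[
S_{3,l}-\E S_{3,l} \ = \ A_l + 2\theta_{h,l}\,B_l, \qquad A_l := \sum_{i,j}\bigl[\tilde h_l^{\,2}(X_i,Y_j)-\E\tilde h_l^{\,2}\bigr], \quad B_l := \sum_{i,j}\tilde h_l(X_i,Y_j).
\]
I would bound $A_l$ by a two-sample analogue of Step~2 of Lemma~\ref{lem:cov_Tn_sharp}, namely decoupling into $\min(n_1,n_2)$ independent $(X,Y)$-blocks and applying Lemma~E.1 of~\cite{chen2018gaussian} to the $\psi_{1/2}$-kernel $\tilde h_l^{\,2}$; this produces only $D_n^2$-scale terms that are absorbed into $D_n^2 n^{3/2}\log^{1/2}(nd)$. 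For $B_l$, apply the two-sample Hoeffding decomposition~\eqref{eqn:hoeffding_decomp_two-sample} for $\tilde h_l$: the two H\'ajek projection pieces $n_2\sum_i Gh_l(X_i)$ and $n_1\sum_j Fh_l(Y_j)$ are controlled by Bernstein in the spirit of Lemma~\ref{lem:rate_proj_1sample}, and the degenerate remainder by Lemma~\ref{lem:E_2sample_inf_Ustat}. Multiplying by $|\theta_{h,l}|\le|\theta_h|_\infty$ and applying the AM--GM bound $2|\theta_h|_\infty D_n \le |\theta_h|_\infty^2 + D_n^2$ converts the sub-Gaussian (variance) contribution into the $|\theta_h|_\infty^2 n_1^{1/2}n_2 \log^{1/2}(nd)$ summand and the $\psi_1$ sub-exponential-tail contribution into the $|\theta_h|_\infty^2 n_2 \log^3(nd)\log(\gamma^{-1})$ summand, with all residual $D_n^2$ fragments folded back into $D_n^2 n^{3/2}\log^{1/2}(nd)$.

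The main obstacle will be carrying out the uniform-in-$l$ two-sample concentration for $A_l$: unlike the one-sample setting of Lemma~\ref{lem:cov_Tn_sharp}, the decoupling uses blocks formed from one element of each sample, so the $\psi_{1/2}$-quantities $\overline\zeta_n$ and $\|M\|_{\psi_{1/2}}$ must be re-derived with mixed $F,G$ distributions, and the $\log^3(nd)$ factor appearing in the $|\theta_h|_\infty^2$ contribution has to be traced through the Orlicz maximum over blocks and over $l$. Once this two-sample analogue is in place, a union bound over the three probability events (for $\{S_1,S_2\}$, $A$, and $B$) together with the assumption $\log(\gamma^{-1})\le K\log(nd)$ collapses the constants into the single $K^\diamond$ and delivers the claim.
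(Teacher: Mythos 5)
Your decomposition into two diagonal blocks and a cross block is exactly the paper's, and your treatment of the diagonal blocks (shift-invariance plus a rerun of Step 2 of Lemma \ref{lem:cov_Tn_sharp}) matches the paper's proof, which applies Lemma E.1 of \cite{chen2018gaussian} to $\hat{\Gamma}_1$ and $\hat{\Gamma}_2$. For the cross block the paper takes a blunter route than yours: it feeds $h_l^2$ wholesale into the two-sample tail bound of Lemma \ref{lem:rate_2sample_Ustat} (itself built on the permutation-pairing reduction of Lemma \ref{lem:rate_2sample_general}), absorbing the shift into the constant $B_n=D_n+|\theta_h|_\infty$; the bracketed $|\theta_h|_\infty^2$ terms of $t^\diamond$ are then literally $n_1n_2\,B_n^2\bigl[n_1^{-1/2}\log^{1/2}(nd)\vee n_1^{-1}\log^{3}(nd)\log(\gamma^{-1})\bigr]$. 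Your expansion $h_l^2=\tilde h_l^{\,2}+2\theta_{h,l}\tilde h_l+\theta_{h,l}^2$ is a finer bookkeeping that isolates where $|\theta_h|_\infty$ enters, and the "main obstacle" you flag for $A_l$ is already resolved by the paper's own two-sample machinery in Lemmas \ref{lem:rate_2sample_general} and \ref{lem:rate_2sample_Ustat}.

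The one step that does not go through as written is the degenerate remainder of $B_l$. Lemma \ref{lem:E_2sample_inf_Ustat} is only an expectation bound, so converting it to a high-probability statement by Markov costs a factor $\gamma^{-1}$, giving roughly $\gamma^{-1}D_n\log(d)(n_1n_2)^{1/2}$ for that piece. After multiplying by $2|\theta_h|_\infty$, no allocation of the $\gamma^{-1}$ under AM--GM keeps this below $K^\diamond t^\diamond$ once $\gamma$ is small: the lemma permits $\gamma$ as small as $(nd)^{-K}$, whereas the Markov route only survives for $\gamma$ no smaller than roughly $n^{-1/2}$. You need a genuine exponential tail bound for the degenerate two-sample part (e.g., Lemma \ref{lem:rate_2sample_general} applied to $\breve{f}_l$), or you can avoid the expansion entirely and concentrate $h_l^2$ directly as the paper does. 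A smaller imprecision: your intermediate diagonal-block bound $D_n n_k^{3/2}\log^{1/2}(nd)$ drops the sub-exponential tail term $D_n^2 n_k\log^{3}(nd)\log(\gamma^{-1})$, which for the small sample $n_1$ can dominate; it is still absorbed into $D_n^2 n^{3/2}\log^{1/2}(nd)$, but only after invoking $n\ge\log^7(nd)$ and $\log(\gamma^{-1})\le K\log(nd)$, so the absorption should be stated at that point rather than claimed for the one-power-of-$D_n$ term alone.
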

\begin{proof}[Proof of Lemma \ref{lem:cov_Tn_sharp_H1}]
	Note that $h_l^2(x,y) = h_l^2(y,x)$ and the summation breaks down to 
	\[
	\sum_{i=1}^n \sum_{i<j} = \sum_{i=1}^m \sum_{j=i+1}^m + \sum_{i=1}^m \sum_{j=m+1}^n + \sum_{i=m+1}^n \sum_{j=i+1}^n.
	\]
	Apply \cite[Lemma E.1]{chen2018gaussian} to $\hat{\Gamma}_{1} = {1 \over n_1(n_1-1)} \sum_{1 \le i < j \le n_1} h(X_i, X_j) h(X_i, X_j)^T$, calculation (similar to Lemma \ref{lem:cov_Tn_sharp} Step 2) shows 
	\begin{align*}
	\Prob \Big( |\hat{\Gamma}_{1} - \E \hat{\Gamma}_{1}|_\infty \ge &K_1 [ D_n n_1^{-1/2} \log^{1/2} (d) +  D_n^2 n_1^{-1} \log^3{(n_1 d)} ]+ t \Big) \\
	& \qquad \le  \exp\left(- {n_1 t^2 \over 3 D_n^2} \right) + 3 \exp \left[- \left( {\sqrt{n_1 t} \over K_2 D_n \log(n_1 d) } \right)\right].
	\end{align*}
	Take $t_1 = K_3 [ D_n n_1^{-1/2} \log^{1/2} (nd) \vee D_n^2 n_1^{-1} \log^3 (n d) \log (\gamma^{-1}) ]$. It follows that 
	\[
	{n_1 {t_1}^2 \over D_n^2} \gtrsim D_n^2 \log(nd) \gtrsim \log (\gamma^{-1})  \text{ and }  {\sqrt{n_1 t_1} \over D_n \log(n_1 d)} \gtrsim \left( {\log^3 (n d) \log (\gamma^{-1}) \over \log^2 (n_1 d) } \right)^{1/2} \gtrsim \log (\gamma^{-1}).
	\]
	So $\Prob \left( |\hat{\Gamma}_{1} - \E \hat{\Gamma}_{1}|_\infty \ge t_1 \right)	\le  \gamma/3$ for some large enough $K_3$. Therefore, the diagonal part obeys the same bound such that the first term $\sum_{i=1}^m \sum_{j=i+1}^m h_l^2(X_i, X_j)$ has a tail bound
	\[
	\Prob \left( {m \choose 2}^{-1} \max_{1\le l \le d} |\sum_{i=1}^m \sum_{j=i+1}^m h_l^2(X_i, X_j)- \E h_l^2(X_i, X_j) |_\infty \ge t_1 \right) 	\le  \gamma/3.
	\]
	Next, apply the two-sample tail bound Lemma \ref{lem:rate_2sample_Ustat} to the middle term. Thus, 
	\[
	\Prob \left( {1 \over m(n-m)} \max_{1\le l \le d} |\sum_{i=1}^m \sum_{j=m+1}^n h_l^2(X_i, X_j)- \E h_l^2(X_i, X_j) |_\infty \ge t_2 \right) 	\le  \gamma/3
	\]
	holds for $t_2 = K_4 B_n^2 [  {n_1}^{-1/2} \log^{1/2}(n d) \vee {n_1}^{-1} \log^{3}(n d) \log(1/\gamma) ]$, where $B_n = D_n + |\theta_h|_\infty$.
	At last, apply \cite[Lemma E.1]{chen2018gaussian} to $\hat{\Gamma}_{2} = {1 \over n_2(n_2-1)} \sum_{1 \le i < j \le n_2} h(Y_i, Y_j) h(Y_i, Y_j)^T$ for the third term, we have
	\begin{align*}
	\Prob \Big( |\hat{\Gamma}_{2} - \E \hat{\Gamma}_{2}|_\infty \ge & K_5 (D_n^2 n_2^{-1} \log(n_2 d))^{1/2} + t \Big) \\
	&\qquad \le  \exp\left(- {n_2 t^2 \over 3 D_n^2} \right) + 3 \exp \left[- \left( {\sqrt{n_2 t} \over K_6 D_n \log(n_2 d)  } \right)\right].
	\end{align*}
	Since $n_2 = n-m \ge n/2$ and $n \gtrsim D_n^2 \log^7 (nd)$, it suffices to take $t_3 = K_7 D_n n^{-1/2} \log^{1/2} (nd)$ such that
	\[
	{n_2 {t_3}^2 \over D_n^2} \gtrsim \log(n d) \quad \text{and} \quad {\sqrt{n_2 t_3} \over D_n \log(n_2 d)} \gtrsim D_n^{-1/2} n^{1/4} \log^{-3/4}(n d)  \gtrsim  \log (\gamma^{-1} ).
	\]
	Then, the third term has a tail bound
	\[
	\Prob \left( {n-m \choose 2}^{-1} \max_{1\le l \le d} |\sum_{i=m+1}^n \sum_{j=i+1}^n h_l^2(X_i, X_j)- \E h_l^2(X_i, X_j) |_\infty \ge t_3 \right) 	\le  \gamma/3.
	\]
	Since there exists a large enough constant $K^\diamond$ such that
	\begin{align*}
	&(n_1^2 t_1) \lor (n_1 n_2 t_2) \lor (n_2^2 t_3)\\ 
	\le & K^\diamond \left\{ D_n^2 n^{3 \over 2} \log^{1 \over 2}(nd) + |\theta_h|^2_\infty [ n_1^{1 \over 2} n_2 \log^{1 \over 2}(nd) + n_2 \log^{3}(nd) \log (\gamma^{-1}) ]\right\} =: t^\diamond ,
	\end{align*}
	we conclude $\Prob \left( \max_{1\le l \le d} |\sum_{i=1}^n \sum_{i<j} h_l^2(X_i, X_j) - \E h_l^2(X_i, X_j) | \ge 3 t^\diamond \right) \le  \gamma$.
\end{proof}

\subsection{Lemma for tail probability of the maximum of two-sample $U$-statistics}
Let $X_1^{n_1}$ and $Y_1^{n_2}$ be two random samples taking values in a measurable space $(S, \mathcal{S})$. Suppose $X_i \sim F$ are independent with $Y_j \sim G$. Let $h: S^2 \rightarrow \R^d$ be a measurable function and
$$
T_n= {1 \over n_1 n_2} \sum_{i=1}^{n_1} \sum_{j=1}^{n_2} h(X_i, Y_j)
$$
be the two-sample $U$-statistics. WLOG, we may first assume $n_1 \le n_2$. Consider a permutation $\pi_{n_2}$ on $Y_1^{n_2}$ and the sum of first $n_1$ pairs $\sum_{i=1}^{n_1} h(X_i, Y_{\pi_{n_2}(i)})$
\[
\begin{array}{ccc;{2pt/2pt}l}
X_1 & \cdots & X_{n_1}& \nonumber \\
\downarrow   &  & \downarrow  & \nonumber \\
Y_{\pi_{n_2}(1)}& \cdots & Y_{\pi_{n_2}(n_1)}&Y_{\pi_{n_2}(n_1+1)} \;\cdots\; Y_{\pi_{n_2}(n_2)} \nonumber
\end{array}	\]
The symmetry leads to $\sum_{\pi_{n_2}} \sum_{i=1}^{n_1} h(X_i, Y_{\pi_{n_2}(i)}) = (n_2-1)! \sum_{i=1}^{n_1} \sum_{j=1}^{n_2} h(X_i, Y_j)$, i.e.
$$
{1 \over n_2!} \sum_{\pi_{n_2}} \sum_{i=1}^{n_1} h(X_i, Y_{\pi_{n_2}(i)}) = {1 \over n_2} \sum_{i=1}^{n_1} \sum_{j=1}^{n_2} h(X_i, Y_j).
$$
This representation reduce the bounds on $Z = n_1 |T_n - \theta_h|_\infty$ to those of $|V|_\infty = |\sum_{i=1}^{n_1} h(X_i, Y_i) -\theta_h |_\infty$, where $\theta_h = \E h(X_1, Y_1)$. Define
\begin{eqnarray}
\overline{h}(x,y) &=& h(x,y) \vone\{ \max_{1\le k \le d} |h_k (x,y)| \le \tau\}, \tau > 0 \nonumber\\
Z_1 &=& \max_{1\le k \le d} \left| \sum_{i=1}^{n_1} \overline{h}_k (X_i, Y_i) - \E\bar{h}_k \right| \nonumber\\
M &=& \max_{1\le k \le d} \max_{1 \le i \le n_1} |h_k (X_i, Y_i)|  \nonumber\\
\overline{\zeta}_{n_1}^2 &=& \max_{1\le k \le d}  \sum_{i=1}^{n_1} \E h_k^2(X_i, Y_i) \nonumber
\end{eqnarray}
By similar argument of Lemma E.1 in \cite{chen2018gaussian}, we have the following result.
\begin{lem}[Sub-exponential inequality for the maxima of centered two-sample $U$-statistics]
	\label{lem:rate_2sample_general}
	Let $X_1, \cdots X_{n_1}$ and $Y_1, \cdots Y_{n_2}$ be two independent sets of iid random vectors from $F$ and $G$, respectively. Suppose $n_1 \le n_2$ and $||h_k(X_1,Y_1)||_{\psi_\alpha} < \infty$ for $\alpha \in (0,1]$ and all $k = 1, \cdots, d$. Let $\tau = 8\E[M]$, then for any $0 < \eta \le 1$ and $\delta>0$, there exists a constant $C(\alpha,\eta,\delta) > 0$ such that
	\begin{equation}
	\Prob ( Z \ge (1+\eta) \E Z_1 + t) \le \exp \left(-{t^2 \over 2(1+\delta) \overline{\zeta}_{n_1}^2} \right) + 3 \exp \left[ - \left( {t \over C(\alpha,\eta,\delta) ||M||_{\psi_\alpha}} \right)^\alpha \right]
	\end{equation}
	holds for all $t>0$.
\end{lem}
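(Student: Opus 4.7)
The plan is to mimic the proof of Lemma E.1 of \cite{chen2018gaussian} by first reducing the two-sample U-statistic to an i.i.d.\ sum via the permutation identity and then applying a Talagrand-Bousquet concentration inequality together with an Orlicz-norm truncation to control the $\psi_\alpha$ tail. The symmetrization identity immediately preceding the lemma writes
\[
n_1(T_n - \theta_h) \;=\; \frac{1}{n_2!}\sum_{\pi_{n_2}} V^{\pi_{n_2}}, \qquad V^{\pi_{n_2}} := \sum_{i=1}^{n_1}\bigl(h(X_i, Y_{\pi_{n_2}(i)}) - \theta_h\bigr),
\]
and each $V^{\pi_{n_2}}$ has the same distribution as $V := \sum_{i=1}^{n_1}\bigl(h(X_i,Y_i) - \theta_h\bigr)$. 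Since $|\cdot|_\infty$ is a convex seminorm and $x \mapsto e^{\lambda x}$ is convex nondecreasing on $[0,\infty)$ for every $\lambda > 0$, Jensen's inequality gives $\E e^{\lambda Z} \le \E e^{\lambda |V|_\infty}$; consequently any Chernoff-type tail bound for $|V|_\infty$ transfers verbatim to $Z$ with identical constants.

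I then truncate the summands of $V$ at level $\tau = 8\E M$, writing $h = \overline{h} + h^{>}$ with $h^{>}(x,y) = h(x,y)\vone\{\max_k|h_k(x,y)| > \tau\}$, and splitting $V = V^{\le} + V^{>}$ after centering so that $\E|V^{\le}|_\infty = \E Z_1$ by the definitions. The summands of $V^{\le}$ are i.i.d., centered, coordinatewise bounded by $2\tau$, with variance proxy at most $\overline{\zeta}_{n_1}^2$, so Talagrand-Bousquet's inequality applied to the $d$ coordinate functionals yields, for any $\eta \in (0,1]$ and $\delta > 0$,
\[
\Prob\!\bigl(|V^{\le}|_\infty \ge (1+\eta)\,\E|V^{\le}|_\infty + t\bigr) \;\le\; \exp\!\Bigl(-\tfrac{t^2}{2(1+\delta)\overline{\zeta}_{n_1}^2}\Bigr) + \exp\!\bigl(-c(\eta,\delta)\,t/\tau\bigr),
\]
with an explicit $c(\eta,\delta) > 0$. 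For the unbounded remainder $V^{>}$, since $h^{>}$ is zero unless $M > \tau$, a standard dyadic peeling $\{M \in (2^j\tau, 2^{j+1}\tau]\}$ together with Markov's inequality applied to $\psi_\alpha(M/\|M\|_{\psi_\alpha})$ and the bound $\tau = 8\E M \le 8(\log 2)^{-1/\alpha}\|M\|_{\psi_\alpha}$ produces
\[
\Prob(|V^{>}|_\infty \ge t) \;\le\; 3\exp\!\Bigl[-\bigl(t/C(\alpha,\eta,\delta)\|M\|_{\psi_\alpha}\bigr)^\alpha\Bigr].
\]
A union bound over the two contributions and the Jensen-Chernoff transfer from the first paragraph then give the claimed inequality for $Z$.

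The main obstacle is the constant bookkeeping in the final combination: the Bernstein cross-term $\exp(-c(\eta,\delta)t/\tau)$ coming out of Talagrand's inequality must be absorbed into the $\psi_\alpha$ tail, so that the stated bound has exactly two terms with prefactor $(1+\eta)$ in front of $\E Z_1$ and variance proxy $(1+\delta)\overline{\zeta}_{n_1}^2$. This absorption splits into two regimes: for small $t$ one inflates $(1+\delta)$ so that the sub-Gaussian term dominates, while for large $t$ one uses $\tau \lesssim \|M\|_{\psi_\alpha}$ together with the elementary comparison between $t/\tau$ and $(t/\|M\|_{\psi_\alpha})^\alpha$ in that regime. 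Once this split is executed exactly as in the one-sample argument of \cite{chen2018gaussian}, every remaining step is either the permutation identity or a direct reuse of the existing proof, with the i.i.d.\ pairs $(X_i, Y_i)$ playing the role of the symmetric pairs treated there.
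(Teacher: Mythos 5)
Your proposal is correct and follows essentially the same route as the paper: the paper's proof is simply the permutation identity set up in the surrounding text (reducing $Z$ to the i.i.d.\ pair sum $|V|_\infty$ via convexity) followed by a citation to Lemma E.1 of \cite{chen2018gaussian}, whose truncation-at-$8\E M$ plus Talagrand/Bousquet-and-$\psi_\alpha$-tail argument is exactly what you reproduce. The only caveat is cosmetic: the tail bound does not transfer ``verbatim'' through Jensen as a single Chernoff bound, but rather term by term after splitting $V=V^{\le}+V^{>}$ (MGF comparison for the bounded part, convex moment comparison for the remainder), which is the standard bookkeeping you already acknowledge.
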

\begin{proof}[Proof of Lemma \ref{lem:rate_2sample_general}]
	See Lemma E.1 in \cite{chen2018gaussian}.
\end{proof}

By Lemma \ref{lem:rate_2sample_general}, we can have the following result.
\begin{lem}[Tail bound of the maxima of two-sample $U$-statistics in second order]
	\label{lem:rate_2sample_Ustat}
	Let $X_1, \cdots X_{n_1}$ and $Y_1, \cdots Y_{n_2}$ be two independent sets of iid random vectors from $F$ and $G$, respectively. Let $\underline{n} = \min\{n_1, n_2\}$, $\overline{n} = \max\{n_1, n_2\}$ and $\zeta \in (0,1)$ be a constant s.t. $\log (\zeta^{-1}) \le K \log (\overline{n} d)$. Suppose $||h_k(X_1,Y_1) - \E h_k (X_1,Y_1)||_{\psi_1}\le D_n$ and $\E |h_k(X_1,Y_1) - \E h_k(X_1,Y_1) |^{2+\ell} \le D_n^\ell$ for all $k = 1, \cdots, d$ and $\ell =1,2$. Denote $B_n = D_n + |\theta_h|_\infty$, where $\theta_h = \E h(X_1,Y_1)$.
	Then,
	\begin{equation}
	\Prob ( \max_{1 \le k \le d} |{1 \over n_1 n_2} \sum_{i=1}^{n_1} \sum_{j=1}^{n_2} h_k^2(X_i, Y_j)- \E h_k^2 (X_i, Y_j)| \ge t^*) \le \zeta
	\end{equation}
	holds for $t^* = K_0  B_n^2 \{  \underline{n}^{-1/2} \log^{1/2}(\overline{n} d) + \underline{n}^{-1} \log^{3}(\overline{n} d) \log(1/\zeta) \}$.
\end{lem}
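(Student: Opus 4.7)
The plan is to apply Lemma~\ref{lem:rate_2sample_general} to the centered squared kernel $\tilde h_k(x,y) := h_k^2(x,y) - \E h_k^2(X_1,Y_1)$; with this choice, the quantity to be bounded coincides with $Z/\underline n$ in the notation of that lemma. Without loss of generality assume $n_1 \le n_2$, so that $\underline n = n_1$ and $\overline n = n_2$; the other case follows by the symmetric roles of the two samples in the double sum.

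First I would translate the moment and Orlicz assumptions on $h_k - \E h_k$ to $\tilde h_k$. The triangle inequality for $\|\cdot\|_{\psi_1}$ together with $|\E h_k|\le|\theta_h|_\infty$ gives $\|h_k\|_{\psi_1}\lesssim B_n$, and the identity $\|X^2\|_{\psi_{1/2}} = \|X\|_{\psi_1}^2$ then yields $\|\tilde h_k\|_{\psi_{1/2}}\lesssim B_n^2$. Similarly $\E h_k^4 \lesssim \E|h_k-\E h_k|^4 + (\E h_k)^4 \lesssim D_n^2 + |\theta_h|_\infty^4 \lesssim B_n^4$ (using $D_n\ge 1$), so $\E\tilde h_k^2 \le \E h_k^4 \lesssim B_n^4$.

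Next I would assemble the three inputs of Lemma~\ref{lem:rate_2sample_general} with $\alpha = 1/2$: $\overline\zeta_{n_1}^2 \lesssim n_1 B_n^4$ from the $L^2$ bound; $\|M\|_{\psi_{1/2}} \lesssim B_n^2 \log^2(\overline n\, d)$ by the standard max-Orlicz inequality (Lemma~2.2.2 of \cite{vandervaartwellner1996}); and $\E Z_1 \lesssim B_n^2\bigl[\sqrt{n_1 \log d} + \log^3(\overline n\, d)\bigr]$ via Lemma~8 of \cite{cck2014b}. Choosing $t$ so that each tail in Lemma~\ref{lem:rate_2sample_general} is at most $\zeta/2$ forces $t \gtrsim B_n^2 \sqrt{n_1 \log(1/\zeta)}$ (Gaussian part) and $t \gtrsim B_n^2 \log^2(\overline n\, d)\log^2(1/\zeta)$ (sub-Weibull part), so $Z \le (1+\eta)\E Z_1 + t$ holds with probability at least $1-\zeta$.

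Dividing through by $n_1$ gives four terms; the main bookkeeping obstacle is the sub-Weibull contribution $B_n^2 n_1^{-1} \log^2(\overline n\, d)\log^2(1/\zeta)$, because the $\alpha=1/2$ tail in Lemma~\ref{lem:rate_2sample_general} naturally produces $\log^2(1/\zeta)$ rather than the $\log(1/\zeta)$ appearing in the claimed rate. Absorbing one power of $\log(1/\zeta)$ into $\log(\overline n\, d)$ via the hypothesis $\log(1/\zeta)\le K\log(\overline n\, d)$ converts this term into the desired $\log^3(\overline n\, d)\log(1/\zeta)/n_1$; the same device majorizes $\sqrt{\log(1/\zeta)/n_1}$ and $\log^3(\overline n\, d)/n_1$ by the two surviving terms of $t^\ast$, yielding the announced bound. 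All remaining steps are routine moment computations.
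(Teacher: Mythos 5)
Your proposal is correct and follows essentially the same route as the paper: apply the sub-exponential maximal inequality (Lemma~\ref{lem:rate_2sample_general}) with $\alpha=1/2$ to the squared kernel, bound $\overline\zeta_{n_1}^2\lesssim n_1B_n^4$, $\|M\|_{\psi_{1/2}}\lesssim B_n^2\log^2(\overline n d)$, and $\E Z_1$ via Lemma~8 of \cite{cck2014b}, then tune $t$ to make each tail at most $\zeta$. The only cosmetic difference is the final bookkeeping: you absorb one power of $\log(1/\zeta)$ into $\log(\overline n d)$ uniformly, while the paper splits into two cases according to whether $\underline n\gtrsim\log^5(\overline n d)\log^2(1/\zeta)$; both yield the stated $t^*$.
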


\begin{proof}[Proof of Lemma \ref{lem:rate_2sample_Ustat}]
	Without loss of generality, we may assume $D_n \ge 1$.
	Let $H_k(x,y) = h_k^2 (x,y), k = 1, \dots, d$, and define $Z$, $Z_1$, $M$ and $\overline{\zeta}_{n_1}^2$ for $H$ accordingly. 
	Apply Lemma \ref{lem:rate_2sample_general} to $H(x,y)$ and follow the fact $||M||_2 \lesssim ||M||_{\psi_{1/2}} = ||\sqrt{M}||^2_{\psi_1}$, we have
	\[
	\Prob (Z \ge 2\E Z_1 + t) \le \exp \left(- {t^2 \over 3\bar{\zeta}_{n_1}^2}\right) + 3\exp  \left[ - \left( {\sqrt{t} \over K_1 ||\sqrt{M}||_{\psi_1}} \right) \right].
	\]
	Note that $||h_k(X_1,Y_1) ||_{\psi_1} \le ||h_k(X_1,Y_1) -\E h_k (X_1,Y_1) ||_{\psi_1} + || \E h_k (X_1,Y_1) ||_{\psi_1} \le D_n + ||\theta_{h,k}||_{\psi_1} = B_n$ and $\E h_k^4 (X_1,Y_1) \lesssim \E |h_k (X_1,Y_1) - \theta_{h,k} |^4 + |\theta_{h,k}|^4 \le D_n^2 + |\theta_{h}|^4_\infty \lesssim B_n^4$. 
	By Lemma 2.2.2 in \cite{vandervaartwellner1996},
	\begin{align*}
	||\sqrt{M}||^2_{\psi_1} &= || \max_{1\le k \le d} \max_{1 \le i \le n_1} |h_k (X_i, Y_i)|  ||^2_{\psi_1} \\
	& \le K_3 (\log(n_1 d) \max_{k,i}  ||h_k(X_i,Y_i) ||_{\psi_1})^2 \\
	& = K_3 \log^2(n_1 d) B_n^2.
	\end{align*}
	Since $\overline{\zeta}_{n_1}^2 = \max_{1\le k \le d}  \sum_{i=1}^{n_1} \E h_k^4(X_i, Y_i) \le n_1 B_n^4$, by Lemma 8 in \cite{cck2014b} and Jensen inequality,
	\[
	\E Z_1 \le K_4 [ \log^{1/2} (d) \overline{\zeta}_{n_1} + \log (d) ||M||_2] \le  K_5 (B_n^2 n_1^{1/2}  \log^{1/2} (n_1 d) +  B_n^2 \log^{3} (n_1 d)).
	\]
	Therefore, 
	\begin{eqnarray}
	\Prob \left( \max_{1 \le k \le d} |{1 \over n_1 n_2} \sum_{i=1}^{n_1} \sum_{j=1}^{n_2} h_k^2(X_i, Y_j)- \E h_k^2| \ge K_5 B_n^2 [ n_1^{-1/2}  \log^{1/2} (d) + n_1^{-1} \log^{3} (n_1 d) ] + t \right)    \nonumber \\
	\le \exp \left(- {n_1 t^2 \over 3 B_n^4 }\right) + 3\exp  \left[ - \left( {\sqrt{n_1 t} \over K_1  K_3 B_n \log(n_1 d)} \right) \right] \nonumber
	\end{eqnarray}
	Recall $\underline{n} = n_1$ and $\overline{n} = n_2$.\\
	(i) If $\underline{n} \ge K_6  \log^5 (\overline{n} d) \log^2 (1/\zeta)$, then take $t_1^* = K B_n^2 \underline{n}^{-1/2} \log^{1/2}(\overline{n} d)$ such that 
	\[
	{n_1 {t_1^*}^2 \over B_n^4} = \log (\overline{n} d) \gtrsim  \log(1/\zeta) \text{ and } { \sqrt{n_1 t_1^*} \over B_n \log(n_1 d)} \ge  \underline{n}^{1/4} \log^{-3/4} (\overline{n} d) \gtrsim \log(1/\zeta).
	\]
	(ii) If $\underline{n} \le K_6  \log^5 (\overline{n} d) \log^2 (1/\zeta)$, then take $t_2^* = K B_n^2 \underline{n}^{-1} \log^{3}(\overline{n} d) \log(1/\zeta)$ such that
	\begin{align*}
	&{n_1 {t_2^*}^2 \over B_n^4} \ge \underline{n}^{-1} \log^6 (\overline{n} d) \log^2 (1/\zeta) \gtrsim \log (1/\zeta)  \qquad \text{ and } \\
	&{\sqrt{n_1 t_2^*} \over B_n \log(n_1 d)} = \log^{1/2} (\overline{n} d) \log^{1/2} (1/\zeta) \gtrsim \log(1/\zeta).
	\end{align*}
	Observing $B_n^2 [ n_1^{-1/2}  \log^{1/2} (d) +  n_1^{-1} \log^{3} (n_1 d) ] \lesssim t_1^* + t_2^* =: t^*$. Hence, 
	\[
	\Prob ( \max_{1 \le k \le d} |{1 \over n_1 n_2} \sum_{i=1}^{n_1} \sum_{j=1}^{n_2} h_k^2(X_i, Y_j)- \E h_k^2| \ge t^*) \le \zeta.
	\]
	
\end{proof}

\subsection{Lemma for two-sample Hoeffding decomposition}

\begin{lem}[Tail bound of the maxima of the first order projection]
	\label{lem:rate_proj_1sample}
	Let $X_1, \dots, X_{n}$ be i.i.d.\ random vectors from $F$ and $Y$ is independently draw from $G$. Suppose  $\theta_h = \E h(X_1, Y)$, $||h_k(X_1,Y) - \theta_{hk}||_{\psi_1}\le D_n$ and $\E |h_k(X_1,Y)  - \theta_{hk}|^{2+\ell} \le D_n^\ell$ for all $k = 1, \dots, d$ and $\ell = 1,2$. Let $\zeta \in (0,1)$ be a constant s.t. $\log (\zeta^{-1}) \le K \log (nd)$. Define the projection $Gh(x) = \E h(x, Y) - \theta_{h}$. Then,
	\begin{equation*}
	\Prob \left( | \sum_{i=1}^{n}  Gh(X_i) |_\infty \ge K D_n \{ n^{1/2} \log^{1/2}(nd) \vee \log^2(nd)\} \right) \le \zeta.
	\end{equation*}
	Therefore when $n \gtrsim \log^3 (nd)$,
	\begin{equation*}
	\Prob \left( | \sum_{i=1}^{n}  Gh(X_i) |_\infty \ge K D_n n^{1/2} \log^{1/2}(nd) \right) \le \zeta.
	\end{equation*}
\end{lem}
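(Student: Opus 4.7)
The plan is to reduce the bound to a standard sub-exponential concentration inequality for the maximum of a sum of i.i.d.\ centered random vectors, in the same spirit as Lemma~\ref{lem:rate_2sample_Ustat}. Since $Gh(X_i)$ are i.i.d.\ with $\E[Gh(X_1)] = \E[h(X_1,Y)] - \theta_h = 0$, no Hoeffding projection or degenerate U-statistic machinery is needed, and the proof reduces to a coordinate-wise tail bound combined with a union bound over $k \in \{1,\dots,d\}$.

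First I would transfer the moment and Orlicz hypotheses from $h(X_1,Y) - \theta_h$ to $Gh(X_1)$ via Jensen's inequality for conditional expectations. Convexity of $|\cdot|^{2+\ell}$ and $\psi_1$ yields
\[
\E|Gh_k(X_1)|^{2+\ell} \le \E|h_k(X_1,Y) - \theta_{hk}|^{2+\ell} \le D_n^\ell, \qquad \|Gh_k(X_1)\|_{\psi_1} \le \|h_k(X_1,Y) - \theta_{hk}\|_{\psi_1} \le D_n,
\]
for $\ell = 1,2$ and every $k$. In particular $\max_k \Var(Gh_k(X_1)) \lesssim D_n^2$.

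Next, writing $S_k := \sum_{i=1}^n Gh_k(X_i)$, I would bound $\E \max_k |S_k|$ via Lemma~8 of \cite{cck2014b}:
\[
\E \max_{1\le k\le d} |S_k| \lesssim \sqrt{\log d}\,\bar\zeta_n + \log d\,\|M\|_{\psi_1},
\]
where $\bar\zeta_n^2 = \max_k \sum_i \E Gh_k(X_i)^2 \lesssim n D_n^2$ and $M = \max_{k,i}|Gh_k(X_i)|$. By Lemma~2.2.2 in \cite{vandervaartwellner1996}, $\|M\|_{\psi_1} \lesssim \log(nd)\,D_n$, whence $\E \max_k|S_k| \lesssim D_n\sqrt{n\log d} + D_n\log^2(nd)$. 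I would then promote this expectation bound to a tail bound by applying the sub-exponential Bernstein inequality (e.g., \cite{adamczak2008}, as invoked through Lemma~E.1 of \cite{chen2018gaussian} throughout the paper) coordinatewise with a union bound over $k$, using variance parameter $n D_n^2$ and Orlicz parameter $D_n$. Taking the deviation proportional to $\log(1/\zeta) \le K\log(nd)$ yields, with probability at least $1-\zeta$,
\[
\max_{1\le k\le d} |S_k| \lesssim D_n\sqrt{n\log(nd)} + D_n\log^2(nd),
\]
which is the stated bound. The simplified version under $n \gtrsim \log^3(nd)$ follows immediately because then $\sqrt{n\log(nd)} \gtrsim \log^2(nd)$.

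The only delicate point is bookkeeping across the two Bernstein regimes (the Gaussian-like contribution $\sqrt{n\log(nd)}$ dominating for large $n$, and the sub-exponential tail contribution $\log^2(nd)$ dominating otherwise) and verifying that $\log(1/\zeta) \le K\log(nd)$ absorbs the $\zeta$-dependent factors into the stated deterministic rate. This mirrors exactly the case analysis carried out at the end of the proof of Lemma~\ref{lem:rate_2sample_Ustat}, so no genuinely new argument is required; the proof should be noticeably shorter since there is no $U$-statistic symmetrization step here.
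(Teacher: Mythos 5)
Your proposal is correct and follows essentially the same route as the paper: transfer the moment and Orlicz bounds from $h_k(X_1,Y)-\theta_{hk}$ to $Gh_k(X_1)$ by Jensen's inequality, bound $\E\max_{k}|\sum_{i}Gh_k(X_i)|$ via Lemma 8 of \cite{cck2014b} together with $\|M\|_{\psi_1}\lesssim D_n\log(nd)$ from Lemma 2.2.2 of \cite{vandervaartwellner1996}, and conclude with the Adamczak-type concentration inequality (Theorem 4 of \cite{adamczak2008}), choosing the deviation so that $\log(1/\zeta)\le K\log(nd)$ is absorbed into the stated rate. The only cosmetic discrepancy is your phrase ``coordinatewise with a union bound over $k$'': the paper instead concentrates the maximum around its expectation (which is why the $\E\max_k$ bound is needed at all), but either route yields the claimed bound.
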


\begin{proof}[Proof of Lemma \ref{lem:rate_proj_1sample}]
	Let $Z = \max_{1 \le k \le d} |\sum_{i=1}^n [ Gh_k (X_i)]|$, $\sigma^2 = \max_{1 \le k \le d} \sum_{i=1}^n \E [ Gh_k (X_i)]^2$ and $M = \max_{1 \le i \le n} \max_{1 \le k \le d} | Gh_k  (X_i)|$. By \cite[Theorem 4]{adamczak2008}, 
	\begin{equation*}
	\Prob \left( Z \ge 2 \E Z + t \right) \le \exp {(-{t^2 \over 3 \sigma^2})} + 3 \exp {(-{t \over K_1||M||_{\psi_1}})}. 
	\end{equation*}
	By Jensen inequality,  $\E |Gh_k(X_i)|^{2} = \E | \E [h_k (X_i, Y) - \theta_{hk}|X_i]|^{2} \le\E |h_k(X_i, Y) - \theta_{hk}|^{2} \le D_n$  and  $||Gh_k(X_i)||_{\psi_1}  \le ||h_k(X_i, Y) - \theta_{hk}||_{\psi_1} \le D_n$. So  $\sigma^2 \le n D_n$. By  \cite[Lemma 2.2.2]{adamczak2008} and \cite[Lemma 8]{cck2014b},
	\begin{eqnarray}
	&& ||M||_{\psi_1} \le K_2 \log(nd) \max_{i,k} ||Gh_k(X_i)||_{\psi_1} \le K_2 D_n \log(nd) \quad \text{ and}  \nonumber\\
	&& \E Z \le K_3 \{\sigma \sqrt{\log d} + ||M||_{\psi_1} \log d \} \le K_4 \{ \sqrt{n \log(d) D_n} + \log(nd) \log (d) D_n\}. \nonumber
	\end{eqnarray}
	Take $t^* = K_5 D_n \{ n^{1/2} \log^{1/2}(nd) \vee \log^2(nd)\}$, simple calculation shows $\Prob (Z \ge t^*) \le \zeta$. 
\end{proof}

\begin{lem}[Maximal inequality for canonical two-sample $U$-statistics]
	\label{lem:E_2sample_inf_Ustat}
	Let $X_1, \dots, X_{n_1}$ and $Y_1, \dots, Y_{n_2}$ be two independent sets of iid random vectors from $F$ and $G$, respectively. Let $\theta_h = \E h(X_1,Y_1)$, $n_1 \le n_2$ and $d \ge 2$. Suppose $||h_m(X_1,Y_1) - \theta_{h,m}||_{\psi_1}\le D_n$ and $\E |h_m(X_1,Y_1)- \theta_{h,m}|^{2+\ell} \le D_n^\ell$ for all $m = 1, \dots, d$ and $\ell =1,2$. We have
	\begin{align*}
	&\E | \sum_{i=1}^{n_1} \sum_{j=1}^{n_2} \breve{f}(X_i, Y_j) |_\infty \\
	\le & K D_n \log (d) \left\{ \log(d) \log (n_2 d) + (n_1n_2)^{1/2} + [n_2 \log(d) \log^2 (n_2 d) ]^{1/2} + [n_1 n_2^2 \log (d) ]^{1/4} \right\}.
	\end{align*}
\end{lem}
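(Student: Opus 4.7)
My plan is to bound the supremum of this completely degenerate two-sample $U$-statistic by reducing it to a Rademacher chaos of order two and then invoking moment inequalities for such chaos in the $\ell^{\infty}$-geometry over the $d$ coordinates.

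First I would apply standard symmetrization twice. Since $\breve{f}$ is degenerate in both arguments, i.e., $\E[\breve{f}(X_1,y)]=\E[\breve{f}(x,Y_1)]=0$ for all $x,y$, introducing independent Rademacher sequences $\{\varepsilon_i\}_{i=1}^{n_1}$ and $\{\varepsilon_j'\}_{j=1}^{n_2}$ independent of $(X_1^{n_1},Y_1^{n_2})$ yields
\begin{equation*}
\E\Big|\sum_{i=1}^{n_1}\sum_{j=1}^{n_2}\breve{f}(X_i,Y_j)\Big|_\infty \;\le\; 4\,\E\Big|\sum_{i,j}\varepsilon_i\varepsilon_j'\,\breve{f}(X_i,Y_j)\Big|_\infty.
\end{equation*}
Conditional on $(X_1^{n_1},Y_1^{n_2})$, the expression inside the outer max is, for each coordinate $m$, a Rademacher chaos of order two with coefficient matrix $A^{(m)}_{ij}=\breve{f}_m(X_i,Y_j)$. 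I would then invoke a Latala/Adamczak moment bound for such chaos, in the spirit of Corollary 5.6 of \cite{chen2018gaussian}, at $L^p$ with $p\asymp\log d$. This exposes four seminorms of each coefficient matrix: the Frobenius norm, the operator norm, a mixed row-norm, and the maximum entry.

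Next I would push the outer expectation through each of the four seminorms using the two-sided hypotheses on $h_m-\theta_{h,m}$. Those bounds pass to $\breve{f}_m$ at the cost of a universal constant: since $Gh_m$ and $Fh_m$ are conditional expectations of $h_m-\theta_{h,m}$, Jensen's inequality for Orlicz norms gives $\|\breve{f}_m\|_{\psi_1}\lesssim D_n$ and $\E|\breve{f}_m|^{2+\ell}\lesssim D_n^\ell$. The Frobenius piece then contributes $D_n (n_1 n_2)^{1/2}$, producing the $(n_1n_2)^{1/2}$ term. The operator and mixed row-norm pieces are suprema of empirical processes in the partial sums $\sum_j A^{(m)}_{ij}$ and $\sum_i A^{(m)}_{ij}$; after another symmetrization plus the $\psi_1$-maximal inequality of \cite[Lemma 8]{cck2014b}, they contribute the $[n_2\log(d)\log^2(n_2d)]^{1/2}$ and $[n_1n_2^2\log d]^{1/4}$ terms respectively (the unusual fourth-root exponent comes from a Jensen step applied to a fourth-moment bound on a row-sum). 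Finally, the maximum-entry piece is controlled by $\|\max_{i,j,m}|\breve{f}_m(X_i,Y_j)|\|_{\psi_1}\lesssim D_n\log(n_2d)$ via \cite[Lemma 2.2.2]{vandervaartwellner1996}, yielding the $\log(d)\log(n_2d)$ term.

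The main obstacle is to cleanly identify the correct four seminorms with their exact scales, in particular the $[n_1n_2^2\log d]^{1/4}$ term whose unusual exponent reflects a mixed Hoffmann--J\o rgensen argument on the row-sums rather than a direct chaos moment computation. I would address this by following closely the one-sample template in \cite{chen2018gaussian}, which already contains three of the four seminorms; the new two-sample complication is the asymmetry in $n_1$ vs.\ $n_2$, which I would handle by viewing the bilinear form as an $n_1\times n_2$ matrix and keeping $n_1\le n_2$ explicit throughout the seminorm bookkeeping, so that the larger sample $n_2$ dominates the row/operator-norm scales while the smaller $n_1$ only appears under fractional powers.
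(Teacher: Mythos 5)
Your opening and closing moves match the paper's: the paper also begins by randomizing with two independent Rademacher sequences (via de la Pe\~na--Gin\'e), also reduces the Orlicz/moment hypotheses from $h_m-\theta_{h,m}$ to $\breve{f}_m$ by Jensen's inequality, and also controls the maximum entry by $\|\max_{i,j,m}|\breve{f}_m(X_i,Y_j)|\|_{\psi_1}\lesssim D_n\log(n_2d)$. But the central mechanism you propose does not match how the four terms in the bound actually arise, and this is where I see a genuine gap. After randomization, the paper does \emph{not} invoke a four-seminorm chaos moment inequality. It applies the Hanson--Wright inequality conditionally on the data to the block matrix $\Lambda^m$ with entries $\breve{f}_m(X_i,Y_j)$, takes a union bound over the $d$ coordinates, and uses $\|\Lambda^m\|_2\le|\Lambda^m|_F$ to collapse everything onto a \emph{single} data-dependent quantity, $I=\E\max_m\sum_{i,j}\breve{f}_m^2(X_i,Y_j)$, picking up one factor of $\log d$. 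The four terms in the final bound are then produced by a \emph{second} Hoeffding decomposition, this time of $\breve{f}_m^2$ itself, inside $I$: a constant term ($n_1n_2\max_m\E\breve{f}_m^2$, giving $(n_1n_2)^{1/2}$ after the square root), two one-sample projection terms (giving the $[n_2\log(d)\log^2(n_2d)]^{1/2}$ and $[n_1n_2^2\log d]^{1/4}$ terms — the fourth root is just the square root of $I$ composed with the square root already present in the projection bounds, not a Hoffmann--J{\o}rgensen phenomenon), and a max-entry term. Crucially, the degenerate part of $\breve{f}_m^2$ is bounded by $K\log d\,\sqrt{I}\,\|M\|_2$, i.e., in terms of $\sqrt{I}$ itself, so one must solve a self-bounding quadratic inequality in $I$ to close the argument. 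Your proposal contains no analogue of this recursion, and your attribution of the terms to "operator norm" and "mixed row-norm" seminorms is not how they arise (the operator norm never appears separately; the $[n_2\log(d)\log^2(n_2d)]^{1/2}$ term comes from the $\|M\|_4^4$ contribution to a one-sample projection, not from an operator-norm piece).

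That said, the route you sketch — an unconditional Adamczak/Gin\'e--Lata{\l}a--Zinn moment inequality for decoupled degenerate two-sample chaos evaluated at $p\asymp\log d$ — is not wrong in principle; such inequalities exist and would yield a maximal inequality of this shape. But as written the proposal defers the entire difficulty to an unspecified two-sample version of Corollary 5.6 of \cite{chen2018gaussian} (which is essentially the statement being proved, in its one-sample form), and the term-by-term bookkeeping you give would not reproduce the stated rates without the conditional Hanson--Wright reduction and the quadratic inequality for $\E\max_m|\Lambda^m|_F^2$. You should either carry out that self-bounding argument explicitly or locate and verify a genuine off-the-shelf two-sample moment inequality with deterministic (already data-integrated) seminorms.
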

\begin{proof}[Proof of Lemma \ref{lem:E_2sample_inf_Ustat}]
	The structure of this proof is similar to the one-sample version in \cite[Thm 5.1]{chen2018gaussian}.
	By constructing randomization from iid Rademacher random variables (i.e. $\Prob(\epsilon_i = \pm 1) ={1 \over 2}$ for all $\epsilon_i$ and $\epsilon_j'$,  $i = 1, \dots, n_1, j=1, \dots, n_2$ ), \cite[Thm 3.5.3]{delaPenaGine1999} shows
	\begin{equation*}
	\E | \sum_{i=1}^{n_1} \sum_{j=1}^{n_2} \breve{f}(X_i,Y_j)|_\infty \le K_1 \E| \sum_{i=1}^{n_1} \sum_{j=1}^{n_2} \breve{f}(X_i,Y_j) \epsilon_i \epsilon_j'|_\infty
	\end{equation*}
	Fix an $m=1, \dots, d$. Let $\Lambda^m$ be a $(n_1+n_2)$-by-$(n_1+n_2)$ matrix with zero diagonal blocks, where $\Lambda^m_{ij} = \breve{f}_m(X_i,Y_{j-n_1})$ if $1 \le i \le n_1, n_1+1\le j \le n_1+n_2$ and $\Lambda^m_{ij} = 0, \ otherwise$. Apply Hanson-Wright inequality \cite[Thm 1]{rudelson2013hanson} conditioning on $X_1^{n_1}$ and $Y_1^{n_2}$,
	\begin{equation*}
	\Prob\left( \epsilon^T \Lambda^m \epsilon | X_1^{n_1} Y_1^{n_2}\right) \le 2 \exp [ -K_2 \min \{ {t^2 \over |\Lambda^m|_F^2}, {t \over ||\Lambda^m||_2 } \} ],
	\end{equation*}
	where $\epsilon^T = (\epsilon_1, \dots, \epsilon_{n_1}, \epsilon'_1, \dots, \epsilon'_{n_2})$ and $t>0$. 
	Denote $V_1 = \max_{1 \le m \le d} |\Lambda^m|_F$ and $V_2 = \max_{1 \le m \le d} ||\Lambda^m||_2$. Let 
	\begin{equation*}
	t^* = \max \{ V_1 \sqrt{\log d \over K_2}, V_2 {\log d \over K_2} \},
	\end{equation*}
	such that 
	\begin{align*}
	\E [ \max_{1 \le m \le d} |\epsilon^T \Lambda^m \epsilon| | X_1^{n_1}, Y_1^{n_2} ] &= \int_{0}^{\infty} \Prob\left( \max_{1 \le m \le d} |\epsilon^T \Lambda^m \epsilon| \ge t | X_1^{n_1}, Y_1^{n_2}\right) dt\\ &\le t^* + 2d \int_{t^*}^{\infty} \max \{ \exp {(-{K_2 t^2 \over V_1^2})}, \exp {(- {K_2 t \over V_2})} \}.
	\end{align*}
	Apply the tail bound of standard Gaussian random variables $1-\Phi(x) \le \phi(x)/x$ for $x>0$, and note that $d \ge 2$, we have 
	\begin{equation*}
	2d \int_{t^*}^{\infty} \exp {(-{K_2 t^2 \over V_1^2})} dt \le {V_1 \over \sqrt{2K_2}} \int_{\sqrt{2 \log d}}^{\infty} \exp {(-{s^2 \over 2})} ds \le {V_1 \over \sqrt{K_2 \log d}} \le K_2 V_1.
	\end{equation*}
	Similarly,
	\begin{equation*}
	2d \int_{t^*}^{\infty} \exp {(- {K_2 t \over V_2})} dt \le 2V_2/K_2.
	\end{equation*}
	By Jensen's inequality and the fact $V_2 \le V_1$, we have
	\begin{align}
	\label{eqn:ramdomization_f}
	\E | \sum_{i=1}^{n_1} \sum_{j=1}^{n_2} \breve{f}(X_i,Y_j) \epsilon_i \epsilon_j'|_\infty &\le K_1 \E [t^* + K_2 V_1 + 2V_2/K_2] \le K_3 (\log d ) \E V_1 \nonumber \\ 
	&\le K_3 (\log d ) (\E \max_{1 \le m \le d} |\Lambda^m|_F^2)^{1/2}  .
	\end{align}
	Our last task is to bound $I \overset{def}{=}\E \max_{1 \le m \le d} |\Lambda^m|_F^2 = \E [\max_{1 \le m \le d} \sum_{i=1}^{n_1} \sum_{j=1}^{n_2} \breve{f}_m^2(X_i,Y_j) ] $. Consider Hoeffding decomposition of $\breve{f}_m^2$, 
	\begin{equation*}
	\breve{f}_0^m (x_1,y_1) = \breve{f}_m^2 (x_1,y_1) - \breve{f}_1^m(x_1) - \breve{f}_2^m (y_1) - \E \breve{f}_m^2,
	\end{equation*}
	where $\breve{f}_1^m(x_1) = \E \breve{f}_m^2 (x_1,Y) - \E \breve{f}_m^2$ and $\breve{f}_2^m(y_1) = \E \breve{f}_m^2 (X,y_1) - \E \breve{f}_m^2$ for $X \sim F \indep Y \sim G$ are two random vectors independent from $X_1^{n_1}, Y_1^{n_2}$, and all $x_1, y_1$ from the measurable space of $F$ and $G$, respectively. Then,
	\begin{align}
	& \E [\max_{1 \le m \le d} \sum_{i=1}^{n_1} \sum_{j=1}^{n_2} \breve{f}_m^2(X_i,Y_j) ] \nonumber\\
	=& \E [\max_{1 \le m \le d} \sum_{i=1}^{n_1} \sum_{j=1}^{n_2} \breve{f}_0^m (X_i,Y_j) + \breve{f}_1^m(X_i) + \breve{f}_2^m (Y_j) + \E \breve{f}_m^2 ] \quad\quad \nonumber\\
	\le&  \E [| \sum_{i=1}^{n_1} \sum_{j=1}^{n_2} \breve{f}_0^m(X_i,Y_j) |_\infty] + n_2 \E [| \sum_{i=1}^{n_1} \breve{f}_1^m(X_i) |_\infty]   + n_1 \E [| \sum_{j=1}^{n_2} \breve{f}_2^m (Y_j) |_\infty] + n_1 n_2 \max_{1 \le m \le d} \E \breve{f}_m^2.  
	\label{eqn:bound_I}
	\end{align}
	Note that, conditioning on $X_1^{n_1}$, Hoeffding inequality shows for $t>0$
	\begin{equation*}
	\Prob \left( |\sum_{i=1}^{n_1} \breve{f}_1^m(X_i) \epsilon_i| >t |X_1^{n_1} \right) \le 2 \exp {(-{t^2 \over 2\sum_{i=1}^{n_1} \breve{f}_1^m(X_i)^2})}.
	\end{equation*}
	Denote $M = \max_{i, j, m} |\breve{f}_m (X_i, Y_j)|$. Following arguments in beginning and the symmetrization inequality \cite[Lemma 2.3.1]{vandervaartwellner1996}, we have
	\begin{align}
	&\E | \sum_{i=1}^{n_1} \breve{f}_1(X_i) |_\infty \le \sqrt{\log d} \ \E \sqrt{\max_m \sum_{i=1}^{n_1} \breve{f}_1^m(X_i)^2} \le K_4 \sqrt{\log d} \sqrt{n_1 \max_m \E \breve{f}_m^4 + \log d ||M||_4^4}, \label{eqn:a1}\\
	&\E | \sum_{j=1}^{n_2} \breve{f}_2 (Y_j) |_\infty \le  \sqrt{\log d} \ \E \sqrt{\max_m \sum_{j=1}^{n_2} \breve{f}_2^m (Y_j)^2} \le K_5 \sqrt{\log d} \sqrt{n_2 \max_m \E \breve{f}_m^4 + \log d ||M||_4^4},  \label{eqn:a2}\\
	&\E | \sum_{i=1}^{n_1} \sum_{j=1}^{n_2} \breve{f}_0(X_i,Y_j) |_\infty \le \log d \ \E \sqrt{\max_m \sum_{i=1}^{n_1} \sum_{j=1}^{n_2} \breve{f}_0^m(X_i,Y_j)^2}  \le K_6 \log d \sqrt{I} ||M||_2.  \label{eqn:a3}
	\end{align}
	The last step of (\ref{eqn:a1}) comes from  \cite[Equation (58)]{chen2018gaussian}. The (\ref{eqn:a2}) follows the same procedure. And the first step of (\ref{eqn:a3}) is dealt the same way as (\ref{eqn:ramdomization_f}) with
	\begin{align}
	\E \sqrt{\max_m \sum_{i=1}^{n_1} \sum_{j=1}^{n_2} \breve{f}_0^m(X_i,Y_j)^2}  & \le 2 \Big[\E \sqrt{\max_m \sum_{i,j} \breve{f}_m^4(X_i,Y_j)} + \E \sqrt{\max_m \sum_{i,j} (\E [\breve{f}_m^2(X_i,Y_j')|X_1^{n_1}] )^2}  \nonumber \\
	+  \E & \sqrt{\max_m \sum_{i,j} (\E [\breve{f}_m^2(X_i',Y_j)|Y_1^{n_2}] )^2} + \E \sqrt{\max_m \sum_{i,j} (\E \breve{f}_m^2(X_i,Y_j) )^2}  \Big] \nonumber\\
	\le&  K_6 \sqrt{I} \sqrt{\E M^2}. \nonumber
	\end{align} 
	Since $||h_m(X_1,Y_1) - \theta_{h,m}||_{\psi_1}\le D_n$ and $\E |h_m(X_1,Y_1)- \theta_{h,m}|^{2+\ell} \le D_n^\ell$, we know $\max_m \E \breve{f}_m^4 \le D_n^2$ and $||M||_4 \lesssim ||M||_{\psi_{1}} \le K_7 D_n \log(n_1 n_2 d)  \le 2 K_7 D_n \log (n_2 d)$. Besides, we have $D_q = \max_m [\E |\breve{f}_m(X,Y)|^q]^{1/q} \lesssim D_n$.
	Plug (\ref{eqn:a1})-(\ref{eqn:a3}) in (\ref{eqn:bound_I}) and the solution of quadratic inequality for $I$ gives   
	\begin{align*}
	I \le K_8 \Big\{ ||M||_2^2 \log^2 d + n_1n_2 D_2 + n_2 \sqrt{\log d} \sqrt{n_1D_4 + \log d ||M||_4^4} \qquad\qquad\\
	+ n_1 \sqrt{\log d} \sqrt{n_2D_4 + \log d ||M||_4^4} \Big\}.
	\end{align*}
	Therefore, the square-root of $I$ is less than the square-root of each term on RHS. Plug the result in \ref{eqn:ramdomization_f}. A simplified result is obtained in the statement of Lemma \ref{lem:E_2sample_inf_Ustat}. 
\end{proof}

\subsection{Additional simulation and tables}
\label{subsec:app_additional_sim_results}

\begin{table}[!htp]
	\vskip .2cm
	\setlength\tabcolsep{4.5pt}
	\caption{Powers report of our method using \textit{linear} kernel. Here, $n = 500, p = 600, \alpha = 0.05$ and change point locations are $t_m = m/n = 5/10, 3/10, 1/10$.}
	\label{tab:power_linear}
	\begin{tabular}{c|ccc|ccc|ccc}
		\hline
		& \multicolumn{3}{c|}{Gaussian} & \multicolumn{3}{c|}{$t_6$} & \multicolumn{3}{c}{ctm-Gaussian} \\\cline{2-10}
		$|\theta|_\infty$ & I        & II      & III     & I       & II     & III    & I         & II        & III      \\ \hline
		\multicolumn{10}{c}{$t_m=5/10$}                                                                                          \\\hline
		0                          & 0.042    & 0.050   & 0.032   & 0.058   & 0.060  & 0.040  & 0.052     & 0.050     & 0.048    \\
		%		0.13                       & 0.052    & 0.060   & 0.038   & 0.058   & 0.060  & 0.044  & 0.056     & 0.050     & 0.044    \\
		0.28                       & 0.100    & 0.178   & 0.082   & 0.082   & 0.134  & 0.072  & 0.066     & 0.102     & 0.070    \\
		0.44                       & 0.436    & 0.628   & 0.390   & 0.186   & 0.420  & 0.212  & 0.154     & 0.356     & 0.200    \\
		0.63                       & 0.886    & 0.970   & 0.896   & 0.610   & 0.828  & 0.590  & 0.554     & 0.810     & 0.578    \\
		0.84                       & 0.996    & 1       & 0.996   & 0.926   & 0.988  & 0.912  & 0.918     & 0.990     & 0.910    \\
		%		1.08                       & 1        & 1       & 1       & 0.998   & 0.998  & 0.996  & 0.996     & 1         & 0.994    \\		
		\hline
		\multicolumn{10}{c}{$t_m=3/10$}                                                                                          \\\hline
		0                          & 0.030    & 0.042   & 0.066   & 0.038   & 0.060  & 0.026  & 0.030     & 0.072     & 0.060    \\
		%		0.13                       & 0.044    & 0.046   & 0.070   & 0.038   & 0.060  & 0.024  & 0.032     & 0.082     & 0.056    \\
		0.28                       & 0.088    & 0.216   & 0.108   & 0.068   & 0.124  & 0.036  & 0.036     & 0.156     & 0.082    \\
		0.44                       & 0.414    & 0.738   & 0.384   & 0.222   & 0.418  & 0.178  & 0.150     & 0.440     & 0.200    \\
		0.63                       & 0.890    & 0.996   & 0.908   & 0.594   & 0.878  & 0.634  & 0.524     & 0.846     & 0.570    \\
		0.84                       & 0.998    & 1       & 0.998   & 0.930   & 0.998  & 0.960  & 0.940     & 0.996     & 0.940    \\
		%		1.08                       & 1        & 1       & 1       & 1       & 1      & 1      & 1         & 1         & 1        \\		
		\hline
		\multicolumn{10}{c}{$t_m=1/10$}                                                                                          \\\hline
		0                          & 0.054    & 0.060   & 0.050   & 0.064   & 0.058  & 0.060  & 0.054     & 0.054     & 0.064    \\
		%		0.13                       & 0.062    & 0.058   & 0.046   & 0.068   & 0.060  & 0.056  & 0.064     & 0.052     & 0.060    \\
		%		0.28                       & 0.062    & 0.062   & 0.048   & 0.068   & 0.062  & 0.058  & 0.060     & 0.056     & 0.066    \\
		%		0.44                       & 0.058    & 0.090   & 0.068   & 0.060   & 0.062  & 0.048  & 0.052     & 0.064     & 0.072    \\
		0.63                       & 0.082    & 0.210   & 0.086   & 0.078   & 0.126  & 0.082  & 0.058     & 0.118     & 0.086    \\
		0.84                       & 0.190    & 0.472   & 0.224   & 0.144   & 0.278  & 0.120  & 0.116     & 0.240     & 0.120    \\
		1.08                       & 0.446    & 0.768   & 0.446   & 0.268   & 0.492  & 0.252  & 0.208     & 0.470     & 0.230    \\
		1.35                       & 0.756    & 0.966   & 0.770   & 0.486   & 0.762  & 0.516  & 0.444     & 0.760     & 0.462    \\
		%		1.66                       & 0.968    & 0.998   & 0.978   & 0.772   & 0.946  & 0.828  & 0.764     & 0.952     & 0.758    \\
		2.00                       & 0.998    & 1.000   & 0.998   & 0.954   & 0.996  & 0.960  & 0.962     & 0.994     & 0.956   \\\hline
	\end{tabular}
\end{table}

\begin{table}[!htp]
	\caption{Powers report of our method using \textit{sign} kernel. Here, $n = 500, p = 600, \alpha = 0.05$ and change point locations are $t_m = m/n = 5/10, 3/10, 1/10$.}
	\label{tab:power_sign}
	\vskip .2cm
	\setlength\tabcolsep{2.5pt}
	\begin{tabular}{c|ccc|ccc|ccc||c|ccc}
		\hline
		& \multicolumn{3}{c}{Gaussian} & \multicolumn{3}{c}{$t_6$} & \multicolumn{3}{c||}{ctm-Gaussian} &                            & \multicolumn{3}{c}{Cauchy} \\\cline{2-10}\cline{12-14}
		$|\theta|_\infty$ & I        & II      & III     & I       & II     & III    & I         & II        & III      & $|\theta|_\infty$ & I       & II      & III    \\\hline
		\multicolumn{14}{c}{$t_m=5/10$}                                          \\\hline
		0                          & 0.056    & 0.043   & 0.048   & 0.066   & 0.062  & 0.066  & 0.067     & 0.032     & 0.055    & 0                          & 0.054   & 0.062   & 0.039  \\
		%		0.13                       & 0.052    & 0.048   & 0.050   & 0.064   & 0.074  & 0.068  & 0.067     & 0.043     & 0.063    & 0.31                      & 0.054   & 0.091   & 0.062  \\
		0.28                      & 0.136    & 0.289   & 0.147   & 0.110   & 0.229  & 0.099  & 0.105     & 0.204     & 0.083    & 0.71                      & 0.403   & 0.651   & 0.432  \\
		0.44                      & 0.566    & 0.870   & 0.624   & 0.452   & 0.738  & 0.479  & 0.364     & 0.674     & 0.397    & 1.23                      & 0.971   & 1       & 0.981  \\
		0.63                      & 0.977    & 1       & 0.971   & 0.915   & 0.996  & 0.913  & 0.854     & 0.980     & 0.872    & 1.91                      & 1       & 1       & 1      \\
		0.84                      & 1        & 1       & 1       & 0.998   & 1      & 1      & 0.988     & 1         & 0.998    & 2.79                      & 1       & 1       & 1      \\\hline
		\multicolumn{14}{c}{$t_m=3/10$}                                          \\\hline
		0                          & 0.049    & 0.037   & 0.047   & 0.039   & 0.068  & 0.056  & 0.051     & 0.049     & 0.055    & 0                          & 0.055   & 0.035   & 0.065  \\
		%		0.13                      & 0.057    & 0.039   & 0.041   & 0.039   & 0.066  & 0.058  & 0.047     & 0.055     & 0.061    & 0.31                      & 0.071   & 0.051   & 0.076  \\
		0.28                      & 0.070    & 0.154   & 0.068   & 0.058   & 0.148  & 0.078  & 0.073     & 0.104     & 0.083    & 0.71                      & 0.257   & 0.386   & 0.280  \\
		0.44                      & 0.342    & 0.619   & 0.342   & 0.218   & 0.451  & 0.230  & 0.189     & 0.427     & 0.240    & 1.23                      & 0.829   & 0.969   & 0.876  \\
		0.63                      & 0.830    & 0.982   & 0.848   & 0.663   & 0.912  & 0.706  & 0.593     & 0.872     & 0.628    & 1.91                      & 1       & 1       & 1      \\
		0.84                      & 0.992    & 1       & 0.996   & 0.975   & 1      & 0.973  & 0.941     & 0.994     & 0.945    & 2.79                      & 1       & 1       & 1      \\\hline
		\multicolumn{14}{c}{$t_m=1/10$}                                       \\\hline
		0                          & 0.042    & 0.046   & 0.065   & 0.053   & 0.046  & 0.046  & 0.050     & 0.048     & 0.050    & 0                          & 0.057   & 0.059   & 0.080  \\
		%		0.13                       & 0.053    & 0.036   & 0.057   & 0.044   & 0.050  & 0.050  & 0.050     & 0.056     & 0.052    & 0.31                      & 0.053   & 0.053   & 0.073  \\
		%		0.28                      & 0.057    & 0.042   & 0.059   & 0.050   & 0.061  & 0.055  & 0.048     & 0.060     & 0.052    & 0.71                      & 0.057   & 0.061   & 0.080  \\
		%		0.44                      & 0.055    & 0.061   & 0.055   & 0.044   & 0.071  & 0.053  & 0.056     & 0.071     & 0.046    & 1.23                      & 0.084   & 0.176   & 0.114  \\
		0.63                      & 0.078    & 0.139   & 0.082   & 0.063   & 0.107  & 0.078  & 0.060     & 0.110     & 0.075    & 1.91                      & 0.216   & 0.394   & 0.243  \\
		0.84                      & 0.147    & 0.309   & 0.155   & 0.097   & 0.231  & 0.132  & 0.104     & 0.218     & 0.110    & 2.79                      & 0.410   & 0.680   & 0.433  \\
		1.08                     & 0.305    & 0.580   & 0.336   & 0.214   & 0.458  & 0.248  & 0.183     & 0.423     & 0.222    & 3.95                      & 0.627   & 0.873   & 0.647  \\
		1.35                      & 0.523    & 0.796   & 0.588   & 0.405   & 0.706  & 0.439  & 0.367     & 0.660     & 0.351    & 5.47                      & 0.806   & 0.931   & 0.806  \\
		2.00                      & 0.891    & 0.992   & 0.931   & 0.794   & 0.964  & 0.834  & 0.815     & 0.950     & 0.828    & 10.02                     & 0.937   & 0.980   & 0.933 \\\hline
	\end{tabular}
\end{table}

{
	
	In this section, we test the performance of the WBS-type procedure (Algorithm~\ref{alg:WBS_Ustat}). Let $n=500, p=600, B=200, B_W = 200, n'= 0.2n=100$ and data be i.i.d.\ Gaussian distributed with covariance structure III. The two change points are $(m_1, m_2) = (150,300)$ and only the $k$-th component of the $k$-th change point has signal $\theta_1^{(1)} = \theta_2^{(2)} = \delta \neq 0$.
	The powers along $\delta$ for each $\alpha = 0.01, 0.05, 0.1$ are shown in the rows of Table~\ref{tab:WBS_power}.
	We find that when $\delta=0$, the power is close to the nominal levels, respectively. Besides, the power grows as $\delta$ increases. %So the modified WBS-type testing works.

	\begin{table}[htb]
		\caption{Power of $U$-statistics based WBS-type testing.}
		\label{tab:WBS_power}
		\centering
		\begin{tabular}{l|lllll}
			\hline
			\multirow{2}{*}{Power} & \multicolumn{5}{c}{$\delta$}          \\ \cline{2-6}
			& 0     & 0.317 & 0.733 & 1.282 & 2.004 \\ \hline
			$\alpha=0.01$ & 0.012 & 0.046 & 0.806 & 0.900 & 0.908 \\
			$\alpha=0.05$ & 0.032 & 0.100 & 0.818 & 0.898 & 0.926 \\
			$\alpha=0.1$  & 0.088 & 0.198 & 0.882 & 0.926 & 0.938 \\ \hline
		\end{tabular}
	\end{table}
}

\subsection{Additional comparisons with \texttt{BABS} and \texttt{Jirak}}
\label{subsec:app_additional_comparisons}

{
	
	We further compare the size control of \cite[\texttt{BABS}]{yuchen2017finite}, \cite[\texttt{Jirak}]{jirak2015} and our linear kernel approach under $H_0$. As suggested, we fix $p=100$ and vary $n$ from 50 to 300. The bootstrap repeat is $B=200$, $\xi_i$ are i.i.d.\ Gaussian with dependence structure III, and each simulation repeats 500 times. The boundary removal parameters in \texttt{BABS} and \texttt{Jirak} are both $0.1n$.
	
	From Figure~\ref{fig:SmallNFixedP_subfig:a}, we can find that all three methods have a decreasing trend when $n$ grows, but our $U$-statistic approach has the lowest uniform error-in-size $\sup_{\alpha \in [0,1]} |\hat{R}(\alpha) - \alpha|$ under each choice of $n$. This confirms that our $U$-statistic test performs better than the others for small $n$. 
	From Figure~\ref{fig:SmallNFixedP_subfig:b} where empirical rejection rates at $\alpha=0.05, 0.1$ are provided, we may observe that the difference among three methods diminishes for $n=300$. However, our approach is closer to the corresponding nominal significance level except for $n=50$. 
	Therefore, the simulation indicates that the no-boundary-removal property in our proposed test is beneficial to size control under small sample size.

	\begin{figure}[htb]
		\centering
		\begin{subfigure}[b]{.6\textwidth}
			\includegraphics[trim = 5 10 30 10, clip,width=\textwidth]{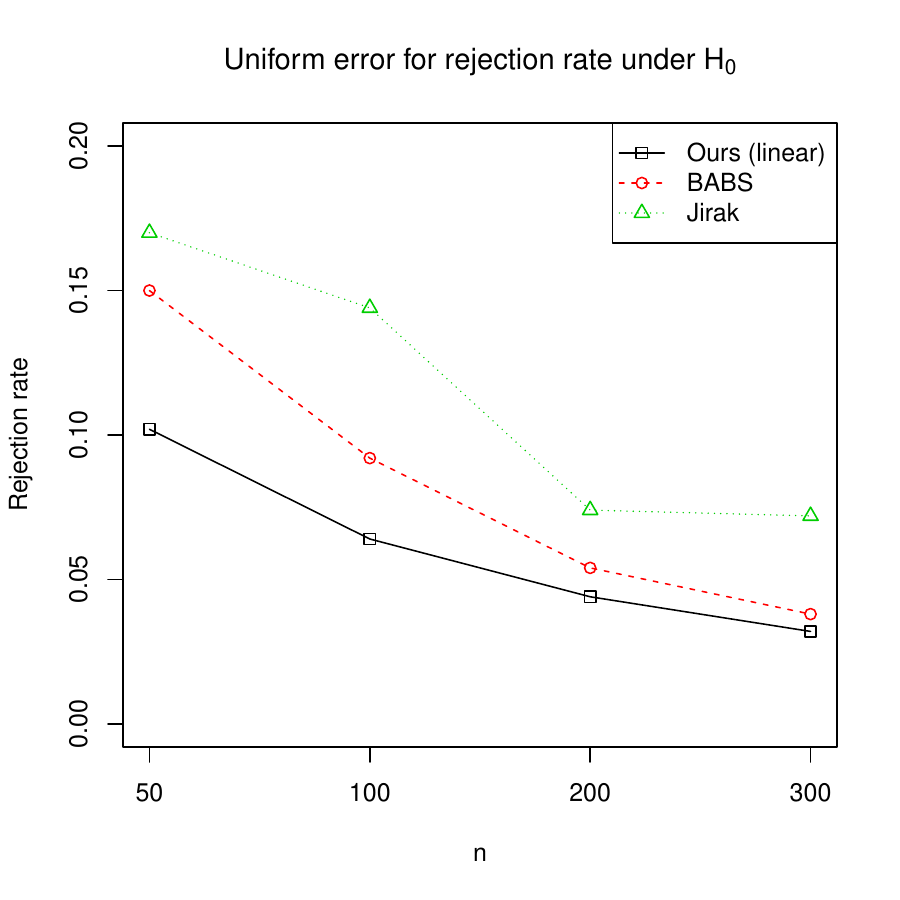}
			\caption{Uniform error-in-size, $\sup_{\alpha \in [0,1]} |\hat{R}(\alpha) - \alpha|$ under $H_0$.}
			\label{fig:SmallNFixedP_subfig:a}
		\end{subfigure}\qquad
		\begin{subfigure}[b]{.35\textwidth}
			\includegraphics[trim = 5 10 30 10, clip,width=\textwidth]{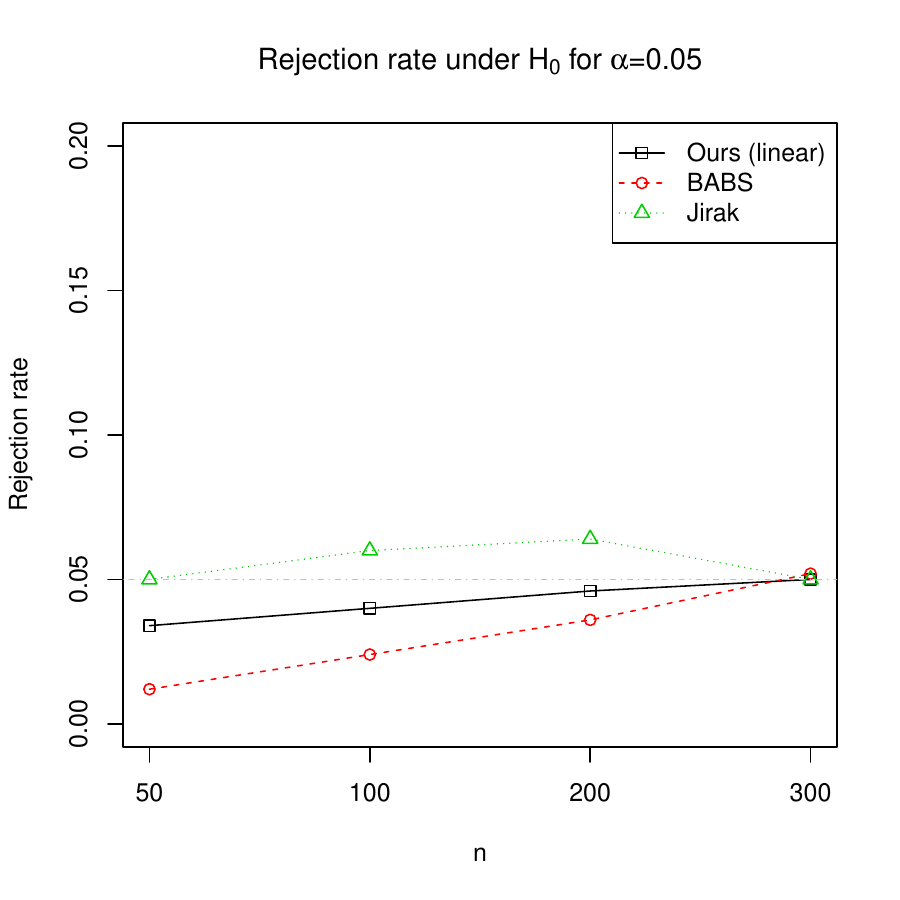}
			\includegraphics[trim = 5 10 30 10, clip,width=\textwidth]{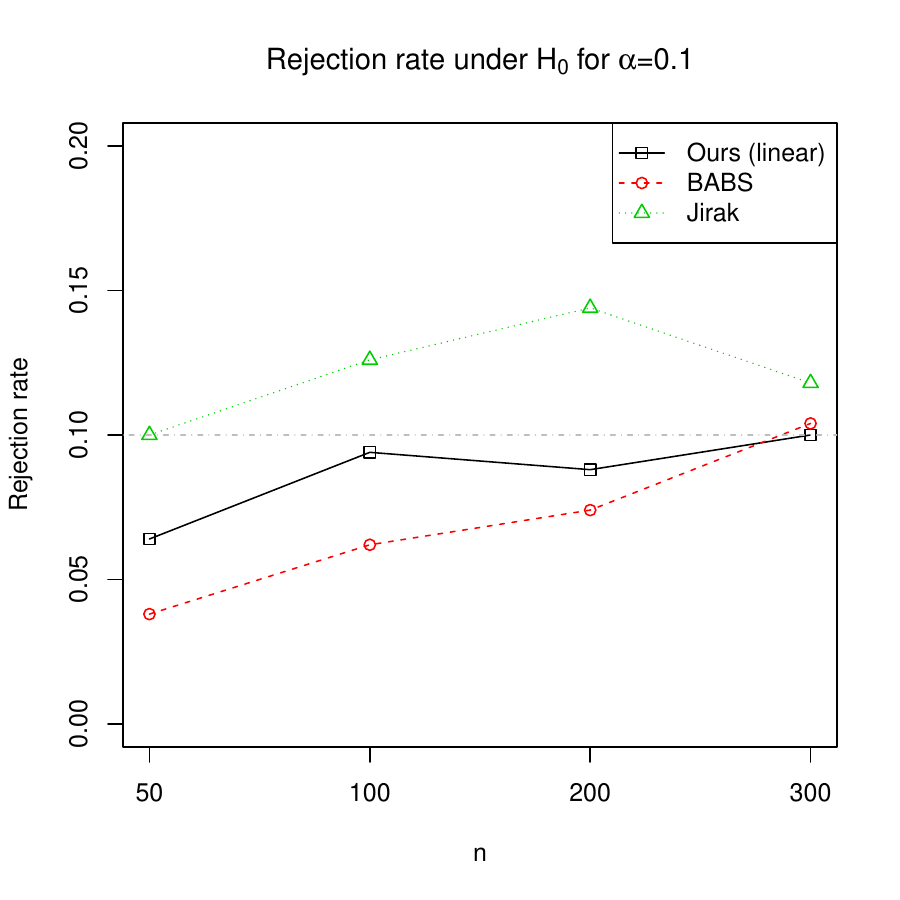}
			\caption{Empirical $\hat{R}(\alpha)$ at $\alpha=0.05, 0.1$.}
			\label{fig:SmallNFixedP_subfig:b}
		\end{subfigure}
		\caption{Comparison of size control among \texttt{BABS}, \texttt{Jirak} and our method using linear kernel. }
		\label{fig: SmallNFixedP}
	\end{figure}
}

\FloatBarrier

 \begin{acks}[Acknowledgments]
	Research partially supported by NSF DMS-1404891, NSF CAREER Award DMS-1752614, and University of Illinois at Urbana-Champaign (UIUC) Research Board Awards (RB17092,  RB18099). This work is completed in part with the high-performance computing resource provided by the Illinois Campus Cluster Program at UIUC.
The authors are grateful to the editor, associate editor, and referee for their insightful comments.
 \end{acks}

%%%%%%%%%%%%%%%%%%%%%%%%%%%%%%%%%%%%%%%%%%%%%%
%% Supplementary Material, if any, should   %%
%% be provided in {supplement} environment  %%
%% with title and short description.        %%
%%%%%%%%%%%%%%%%%%%%%%%%%%%%%%%%%%%%%%%%%%%%%%

%\begin{supplement}
%\stitle{???}
%\sdescription{???.}
%\end{supplement}
\clearpage
%% if your bibliography is in bibtex format, uncomment commands:
\bibliographystyle{imsart-number} % Style BST file (imsart-number.bst or imsart-nameyear.bst)
\bibliography{hdcp_u2021}       % Bibliography file (usually '*.bib')

%% or include bibliography directly:
% \begin{thebibliography}{}
% \bibitem{b1}
% \end{thebibliography}

\end{document}